\theoremstyle{definition}
\newtheorem{defn}[equation]{Definition}
\newtheorem{remark}[equation]{Remark}
\newtheorem{example}[equation]{Example}
\newtheorem{ualgorithm}{\textrm{\textbf{Algorithm}}}
\newtheorem{unotion}[ualgorithm]{\textrm{\textbf{Notion}}}
\newtheorem{uclaim}{\textrm{\textbf{Claim}}}
\newtheorem{uquestion}{\textrm{\textbf{Question}}}
\theoremstyle{plain}
\newtheorem{lemma}[equation]{Lemma}
\newtheorem{theorem}[equation]{Theorem}
\newtheorem{utheorem}{\textrm{\textbf{Theorem}}}
\newtheorem{ucor}[utheorem]{\textrm{\textbf{Corollary}}}
\newtheorem{ulemma}[utheorem]{\textrm{\textbf{Lemma}}}
\newcommand{\Z}{\mathbb{Z}}
\newcommand{\E}{\mathrm{E}}
\newcommand{\A}{\mathbb{\alpha}}
\newcommand{\B}{\mathbb{\beta}}
\newcommand{\diag}{\mbox{diag}}
\newcommand{\I}{\mathrm{I}}
\newcommand{\El}{\mathrm{E}}
\newcommand{\adj }{\mathrm{adj}}
\newcommand{\Ro }{\mathrm{R}}
\newcommand{\Co }{\mathrm{C}}
\newcommand{\So }{\mathrm{S}}
\newcommand{\ml }{\mathrm{m}}
\title{}
\numberwithin{equation}{section}
\begin{document}

\title[Inequalities for totally nonnegative matrices]{Inequalities for totally nonnegative matrices: Gantmacher--Krein, Karlin, and Laplace}

\author[Shaun M. Fallat \and Prateek Kumar Vishwakarma]{Shaun M. Fallat \and Prateek Kumar Vishwakarma}

\address[]{Shaun M. Fallat}
\email{shaun.fallat@uregina.ca}

\address[]{Prateek Kumar Vishwakarma}
\email{prateek.vishwakarma@uregina.ca,~prateekv@alum.iisc.ac.in}

\address{Department of Mathematics and Statistics$,$ College West 307$.$14, 3737 Wascana Parkway$,$ University of Regina$,$ SK S$4$S $0$A$2,$ Canada}

\address{\today}

\keywords{Laplace, Gantmacher, Krein, Karlin, Fischer, Sylvester, totally nonnegative matrices, planar networks, inequalities, identities, preservers, set theoretic operations, index operations}

\subjclass[2010]{Primary 05C50, 15A15, 15B48; Secondary 05A20, 05C10, 05C38, 05E99, 15A45, 47A50, 47B49.}

\begin{abstract}
A real linear combination of products of minors which is nonnegative over all totally nonnegative (TN) matrices is called a determinantal inequality for these matrices. It is referred to as multiplicative when it compares two collections of products of minors and additive otherwise. Set theoretic operations preserving the class of TN matrices naturally translate into operations preserving determinantal inequalities in this class. We introduce index-row (and index-column) operations that act directly on all determinantal inequalities for TN matrices, and yield further inequalities for these matrices. These operations assist in revealing novel additive inequalities for TN matrices embedded in the classical identities due to Laplace $[$\textit{Mem$.$ Acad$.$ Sciences Paris} $1772]$ and Karlin $(1968).$ In particular, for any square TN matrix $A,$ these derived inequalities generalize -- to every $i^{\mbox{th}}$ row of $A$ and $j^{\mbox{th}}$ column of ${\rm adj} A$ -- the classical Gantmacher--Krein fluctuating inequalities $(1941)$ for $i=j=1.$ Furthermore, our index-row/column operations reveal additional undiscovered fluctuating inequalities for TN matrices. 

The introduced index-row/column operations naturally birth an algorithm that can detect certain determinantal expressions that do not form an inequality for TN matrices. However, the algorithm completely characterizes the multiplicative inequalities comparing products of pairs of minors. Moreover, the underlying index-row/column operations add that these inequalities are offshoots of certain ``complementary/higher'' ones. These novel results seem very natural, and in addition thoroughly describe and enrich the classification of these multiplicative inequalities due to Fallat--Gekhtman--Johnson $[$\textit{Adv$.$ Appl$.$ Math$.$} $2003]$ and later Skandera $[$\textit{J$.$ Algebraic Comb$.$} $2004]$ {(and revisited by Rhoades--Skandera $[$\textit{Ann$.$ Comb$.$} $2005,$ \textit{J$.$ Algebra} $2006]$)}.
\end{abstract}

\maketitle
\tableofcontents

\section{Introduction}
A real matrix with all its minors nonnegative is called totally nonnegative (TN). These matrices arise in applied and pure mathematics, including statistics, numerical analysis, mathematical physics, computational interpolation theory, and combinatorics \cite{A, BFZ, B, FJb, FG05, FGJ00, FZ2, GK1, GK60, GK2, GM, K2, karmac, Lu1, Sc,W}. One of the most interesting and fruitful developments within the theory of total nonnegativity has been the connection between totally nonnegative matrices and their associated matrix factorizations, especially the well-known bidiagonal factorization \cite{C1, C2, Fal01, FZ2, GP1, GP4}. There is a natural correspondence between these bidiagonal factorization and the so-called planar networks \cite{Fal01, FZ2}. A significant utility of these planar networks in this context can be derived from Lindstrom's lemma \cite{Lin73} which connects the minors of a given matrix to weighted sums of families of directed paths. These weighted paths are very desirable combinatorial objects owing to their natural yet crucial role in the development of determinantal inequalities and identities for totally nonnegative matrices \cite{BF08, FGJ03, GS, RS2005, RS2006, Skan-Sos}. See Section~\ref{Section-TN} for a brief discussion on these connections that are relevant to this paper.

Studies on determinantal inequalities have intertwined with the continued development of the theory of totally nonnegative matrices. In particular, multiplicative determinantal inequalities, which are inequalities comparing products of minors, have been studied extensively with respect to this class of matrices \cite{BF08,FGJ03, RS2005, RS2006, GS, Ska04}, and are often found to align with known inequalities for positive semidefinite matrices and even $M$-matrices \cite{BJ2, FHJ, FJ}. For example, the topic originates with the verification of the classical inequalities due to Hadamard, Fischer, and Koteljanskii \cite{Ko1, Ko2}. Much of this general study of determinantal inequalities for various positivity classes of matrices, including totally nonnegative matrices, parallels the developed theory for positive semidefinite matrices \cite{FJ}. 

Multiplicative determinantal inequalities may also be interpreted via ratios where both the numerator and the denominator consist of products of minors. The multiplicative inequalities involving products of pairs of minors (which we call as the smallest multiplicative inequalities) have been completely characterized for totally nonnegative matrices due to the work of Fallat--Gekhtman--Johnson \cite{FGJ03} and Skandera \cite{Ska04} {(and later further studied by Rhoades--Skandera \cite{RS2005, RS2006})}, and studies involving the generators of such ratios have also been investigated in \cite{BF08}. In this work we introduce a tool, which is a set theoretic operation, that can be applied on determinantal inequalities for totally nonnegative matrices to potentially deduce other families of inequalities for these matrices (see Section~\ref{section-row-op}). Using this set operation, we explore the theory of multiplicative determinantal inequalities, and obtain a novel classification of the multiplicative inequalities characterized due to Fallat--Gekhtman--Johnson \cite{FGJ03} and Skandera \cite{Ska04} {(which was re-derived by Rhoades--Skandera \cite{RS2005, RS2006})}. Moreover, we thoroughly describe these multiplicative inequalities by unraveling a disguised remarkable association among these inequalities in Sections~\ref{Section-multi-results} and \ref{Multi-proofs}.

Additive inequalities for totally nonnegative matrices are real linear combinations of products of minors that are nonnegative over all totally nonnegative matrices. These inequalities are far less understood and studied. (See \cite{Skan-Sos} for a very recent advance along these lines.) For the most part, such determinantal expressions and inequalities stem from the classical Pl\"{u}cker and the Laplace identities of the determinant, both of which are known to hold for all matrices \cite{HJ1, MM}. Perhaps the most prominent and utilized such expression in the context of totally nonnegative matrices is the short-term (or 3-term) Pl\"{u}cker identity, which is a key ingredient used in proving that invertible totally nonnegative matrices are dense in all totally nonnegative matrices \cite{A, K2}. The short-term Pl\"{u}cker identity also arises in the verification of the Whitney-type (or Neville) eliminations on totally nonnegative matrices. This leads to the principal fact that these matrices exhibit an elementary bidiagonal factorization \cite{Fal01, W}. Considering the Laplace expansion along the first row of totally nonnegative matrices, Gantmacher--Krein \cite{GK2} observed a telescopic series of inequalities for these matrices. Using the index-row operations that we introduce in Section~\ref{section-row-op}, we completely refine the classical Laplace and Karlin's identity along the lines of Gantmacher--Krein \cite{GK2}, and seek another novel set of fluctuating inequalities.

We acknowledge the intimate relationship between expressions involving minors of totally nonnegative matrices and relations involving Pl\"{u}cker coordinates corresponding to certain cluster variables associated to a given Cluster algebra. (For example, such connections can be found in \cite{MS, MSp}.) We have decided to circumvent relaying on this connection and remain within the matrix theory setting where this analysis applies nicely and seems to exhibit the most natural applications. Furthermore, it seems that the index-row operation introduced in this work may not have an explicit operation that is known to preserve the positive Grassmannian. Such implications and related consequences in the more algebraic setting will be put aside for possible future work along these lines.

In Section~\ref{Section-add-ineq}, we introduce our main results on additive inequalities and systematically go through classical identities and inequalities due to Laplace \cite{L}, Karlin, M\"{u}hlbach, Gasca \cite{K2, MG, M90}, Gantmacher--Krein \cite{GK2}. Other historically significant relationships that also arise can be found in \cite{Cau, Cay, L, M}. Then, in Section~\ref{Section-TN}, we briefly discuss planar networks that specifically rooted from bidiagonal factorizations of totally nonnegative matrices. Then, we devote Section~\ref{section-row-op} to introduce and establish the functionality of the index-row/column operations, which involves proving our key results that capture the essence of the overall aim in this paper. In Section~\ref{Proofof-Section-add-ineq} we prove the additive inequalities discussed in Section~\ref{Section-add-ineq} with the assistance of the index operations introduced in Section~\ref{section-row-op} and the preliminary discussion in Section~\ref{Section-TN}. Then, in Sections~\ref{Section-multi-results} and \ref{Multi-proofs} we discuss the multiplicative inequalities, which includes a thorough discussion on the works of Fallat--Gekhtman--Johnson \cite{FGJ03} and Skandera \cite{Ska04} {(and later revisited in \cite{RS2005, RS2006})}. Finally in Section~\ref{Section-conclusion}, we conclude this paper with a glimpse of the action of the index operations on the Barrett--Johnson determinantal inequality by Skandera--Soskin \cite{Skan-Sos}.

\section{Additive inequalities: main results}\label{Section-add-ineq}

Laplace \cite{L} proved that the product of a real square matrix $A$ with its adjugate $\adj A$ is a scalar matrix. More precisely:
\begin{theorem}[Laplace \cite{L}]\label{Laplace-thm}
Suppose $n\geq 1$ is an integer, and $A:=(a_{ij})$ is an $n\times n$ real matrix. Then
\begin{eqnarray}\label{Laplace-iden}
\sum_{k=1}^{n}(-1)^{j+k}a_{ik}\det A_{jk} =
\begin{cases}
 \det A & \mbox{ if } i=j, \mbox{ and}\\ 
 0 & \mbox{ otherwise,}
\end{cases}
\end{eqnarray}
where $A_{jk}$ is the submatrix of $A$ obtained upon omitting $j^{\mbox{th}}$ row and $k^{\mbox{th}}$ column.
\end{theorem}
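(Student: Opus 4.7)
The plan is to reduce the statement to the standard row-cofactor expansion of the determinant plus the observation that a matrix with two identical rows has determinant zero. Both of these are well-established and would be invoked from standard linear algebra (e.g.\ from \cite{HJ1, MM}).

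First I would dispose of the diagonal case $i=j$. Here the identity
\[
\sum_{k=1}^{n}(-1)^{j+k}a_{jk}\det A_{jk}=\det A
\]
is precisely the cofactor expansion along the $j^{\mbox{th}}$ row. If needed, I would sketch its proof by using multilinearity of the determinant in the $j^{\mbox{th}}$ row, writing the $j^{\mbox{th}}$ row as $\sum_k a_{jk} e_k^{\top}$, and invoking the fact that for the standard basis row $e_k^{\top}$ placed in row $j$ the determinant collapses, via row/column swaps producing the sign $(-1)^{j+k}$, to $\det A_{jk}$.

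Next, for the off-diagonal case $i\neq j$, the key step is the \emph{row replacement trick}: form the auxiliary matrix $B$ obtained from $A$ by replacing its $j^{\mbox{th}}$ row by a copy of the $i^{\mbox{th}}$ row of $A$. Since the $j^{\mbox{th}}$ row is the only row being altered, deleting the $j^{\mbox{th}}$ row of $B$ produces the same submatrix as deleting the $j^{\mbox{th}}$ row of $A$, so $B_{jk}=A_{jk}$ for every $k$. Applying the cofactor expansion already established in the first step, but now along the $j^{\mbox{th}}$ row of $B$, yields
\[
\sum_{k=1}^{n}(-1)^{j+k}a_{ik}\det A_{jk}=\sum_{k=1}^{n}(-1)^{j+k}b_{jk}\det B_{jk}=\det B.
\]
Since $B$ has two equal rows (the $i^{\mbox{th}}$ and $j^{\mbox{th}}$), the alternating property of the determinant forces $\det B=0$, which finishes the case $i\neq j$.

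The two cases together give \eqref{Laplace-iden}. The only real conceptual step is the row-cofactor expansion itself; the off-diagonal case is then a short consequence via the substitution trick. I do not anticipate any genuine obstacle, as no totally nonnegative hypothesis is used and the identity is purely algebraic, valid for all real (indeed all commutative-ring) matrices.
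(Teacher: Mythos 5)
Your proof is correct and is the standard textbook argument: cofactor expansion along row $j$ for the diagonal case, and the row-replacement trick for the off-diagonal case, where replacing row $j$ of $A$ with row $i$ leaves the minors $A_{jk}$ unchanged (they omit row $j$) and produces a matrix with two equal rows, hence determinant zero. The paper itself does not supply a proof of this theorem — it is stated as the classical Laplace identity and cited directly to \cite{L} — so there is no internal argument to compare against; your derivation is the natural one and would be accepted wherever a self-contained proof were required.
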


It is natural to seek a refinement of the Laplace identity \eqref{Laplace-iden}, possibly for some distinguished class of matrices. One class that has prominently featured in many areas of mathematics consists of the totally negative matrices -- see the opening paragraph. Schoenberg showed \cite{Sc} that every such matrix $A$ has the variation diminishing property: the number of sign-changes in the coordinates of $Ax$ is at most the number of sign-changes in $x$, for every vector $x$. A key ingredient in the proof of this property (see e.g. \cite{FJb, Pi}) involves manipulating the Laplace expansion. Another fundamental result that involves manipulating the Laplace expansion is the bidiagonal factorization theorem or the classification theorem due to Whitney \cite{W} (see Theorem~\ref{TN-classification}). Therefore, we seek to refine the Laplace identity \eqref{Laplace-iden}, for totally nonnegative matrices. Let us formally define totally nonnegative matrices before we continue this discussion.

\begin{defn}[Totally nonnegative matrices]
Suppose $m,n\geq 1$ are integers, and let $A$ be an $m\times n$ real matrix. Then $A$ is called totally nonnegative (TN) provided for every square submatrices $B$ of $A,$ $\det B \geq 0.$
\end{defn}

Now suppose we refer to the left hand side of the Laplace identity \eqref{Laplace-iden} as the Laplace expansion along the $i^{\mbox{th}}$ row (of $A$) and $j^{\mbox{th}}$ column (of $\adj A).$ Then \eqref{Laplace-iden} shows that this Laplace expansion yields the determinant of $A$ if $i=j,$ otherwise it returns zero. Gantmacher--Krein \cite{GK2} took the first step in the refinement of Laplace identity, and proved the following set of inequalities for $i=j=1$.

\begin{theorem}[Gantmacher--Krein \cite{GK2}]\label{GK-thm}
Suppose $n,l\geq 1$ are integers such that $l\leq n.$ If $A:=(a_{ij})$ is an $n\times n$ totally nonnegative matrix then,
\begin{equation}\label{GK-ineq}
\sum_{k=1}^{l}(-1)^{1+k}a_{1k}\det A_{1k} 
\begin{cases}
\geq \det A & \mbox{ if }l \mbox{ is odd, and} \\
\leq \det A & \mbox{ otherwise,} 
\end{cases}
\end{equation}
where $A_{1k}$ is obtained upon omitting the $1^{\mbox{th}}$ row and $k^{\mbox{th}}$ column in $A.$ 
\end{theorem}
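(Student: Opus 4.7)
The plan is to convert the two-sided inequality \eqref{GK-ineq} into a single non-negativity statement for a tail sum, and then to establish that statement by combining the planar network / Lindstr\"om description of Section~\ref{Section-TN} with the set row/column operations introduced in Section~\ref{section-row-op}.

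First, I would use the Laplace identity \eqref{Laplace-iden} with $i=j=1$ to observe that
\begin{equation*}
\det A \;-\; \sum_{k=1}^{l}(-1)^{k+1}a_{1k}\det A_{1k} \;=\; \sum_{k=l+1}^{n}(-1)^{k+1}a_{1k}\det A_{1k}.
\end{equation*}
After the re-indexing $m=k-l$ and pulling out a factor of $(-1)^{l}$ from the right-hand side, the two cases of \eqref{GK-ineq} collapse to the single claim
\begin{equation*}
T_l \;:=\; \sum_{m=1}^{n-l}(-1)^{m+1}\,a_{1,l+m}\,\det A_{1,l+m} \;\geq\; 0, \qquad 0\leq l\leq n-1.
\end{equation*}
The endpoints $T_0=\det A\geq 0$ and $T_{n-1}=a_{1n}\det A_{1n}\geq 0$ are immediate from total non-negativity, while the telescoping relation $T_l+T_{l+1}=a_{1,l+1}\det A_{1,l+1}$ already exhibits the complementary structure between consecutive $T_l$.

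Next I would interpret each summand $a_{1,l+m}\det A_{1,l+m}$ combinatorially. By Lindstr\"om's lemma applied to a planar network realizing $A$ (see Section~\ref{Section-TN}), it equals the weighted count of path systems $(\pi_1,\pi_2,\ldots,\pi_n)$ in which $\pi_1$ is an arbitrary path from the source $s_1$ to the sink $t_{l+m}$, and $(\pi_2,\ldots,\pi_n)$ is a non-intersecting order-preserving family from $(s_2,\ldots,s_n)$ to $(t_1,\ldots,t_n)\setminus\{t_{l+m}\}$. Consequently $T_l$ is a signed enumeration of such configurations whose $\pi_1$ lands in $\{t_{l+1},\ldots,t_n\}$, and I would construct a sign-reversing involution that swaps $\pi_1$ with one of the $\pi_i$ it crosses at a canonically chosen crossing; configurations whose swap partner still has $\pi_1$ landing in $\{t_{l+1},\ldots,t_n\}$ cancel, and the uncancelled residue -- those whose swap partner would send $\pi_1$ to some $t_j$ with $j\leq l$ -- contributes nonnegatively, yielding $T_l\geq 0$.

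Alternatively, and more in line with the paper's methodology, I would apply the set row/column operations of Section~\ref{section-row-op} directly to the Laplace identity \eqref{Laplace-iden}: restricting the column index set $\{k:1\leq k\leq n\}$ to $\{k:l+1\leq k\leq n\}$ and tracking signs through these operations should deliver $T_l\geq 0$ as a consequence of the Laplace identity together with the standard non-negativity of TN minors. The main obstacle is the delicate set-up of the sign-reversing involution in the combinatorial route, since the naive swap-at-first-crossing may break the non-intersecting property of the remaining paths and cause the partner to fall outside the intended configuration space; a more refined crossing-resolution scheme (iterative swaps, or a carefully chosen global ordering of crossings) is required. The set row/column operations of Section~\ref{section-row-op} thus provide the clean algebraic route that I expect to sidestep this combinatorial subtlety.
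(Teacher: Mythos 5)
Your reduction of \eqref{GK-ineq} to the single claim $T_l := \sum_{m=1}^{n-l}(-1)^{m+1}a_{1,l+m}\det A_{1,l+m}\geq 0$ is correct and is essentially the reduction the paper uses as well (via $(-1)^{1+i}\sum_k a_{ik}^*=0$). However, neither route you sketch to establish $T_l\geq 0$ goes through as written.

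The combinatorial route is not carried out. You identify the right objects (via Lindstr\"om's lemma), but you yourself point out that the naive swap-at-first-crossing involution breaks the non-intersecting property of the remaining paths and can leave the image outside the configuration space; you then defer to an unspecified ``more refined crossing-resolution scheme.'' That refined scheme \emph{is} the proof, and without it nothing has been shown. There is no automatic reason a sign-reversing involution of the proposed type exists; the cancellation structure of $T_l$ is genuinely delicate (the same sum is not nonnegative term by term, only in the aggregate after heavy cancellation).

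The algebraic ``alternative'' is based on a misreading of the machinery of Section~\ref{section-row-op}. The operations $\Ro_{(u,v)}$ and $\Co_{(u,v)}$ do not ``restrict the column index set'': they replace a single index $u$ by a consecutive index $v$ in each minor where this is possible, and then keep only the terms where the maximal number of swaps occurred. More importantly, Lemma~\ref{Main-lemma-1} sends \emph{inequalities} to inequalities and Lemma~\ref{Main-lemma-2} sends \emph{identities} to further identities; neither manufactures a nontrivial inequality out of an identity. Since the Laplace expansion \eqref{Laplace-iden} is an identity, applying $\Ro_{(u,v)}$ or $\Co_{(u,v)}$ to it can only produce new identities (equations equal to $0$), never the one-sided statement $T_l\geq 0$. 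In the paper these operations propagate an already-proved base inequality (the $i=1$ case, or the complementary case $P=Q^{\mathsf c}=[1,d]$ in Theorem~\ref{Add-thm-3}) to other instances; they do not prove the base case.

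The paper in fact proves the Gantmacher--Krein base case by a different method: after the Laplace/reversal reduction, it introduces the auxiliary matrix $B_{il}$ obtained by zeroing part of a row of $A$, reformulates the desired tail-sum positivity as $(-1)^{i+l}\det B_{il}\geq 0$, and proves this by Sylvester's determinant identity together with induction on the matrix size. (A second, independent verification via planar networks appears after Lemma~\ref{Add-thm-3-lemma}, but it is a direct Lindstr\"om weight computation exhibiting $T_l$ as a manifestly nonnegative sum of weights, not a sign-reversing involution.) You would need one of these concrete mechanisms -- or a fully specified involution -- to close the gap.
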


Gantmacher--Krein \cite{GK2} proved that the \textit{partial} Laplace expansions along the first row of TN matrices form a sequence of fluctuating inequalities about the determinant, where $\det A$ happens to be on the right hand side when $i=j=1$ in \eqref{Laplace-iden}. We say that these fluctuate about the determinant as the inequality shifts from being $\geq$ to $\leq,$ or vice versa, as $l$ takes successive values in $\{1,2,\ldots,n\}.$ We shall use the same term in similar contexts later as well. Now, to see how we refine \eqref{GK-ineq} for all other values of $i$ and $j,$ consider the following for $n\times n$ TN matrices $A:$
\begin{equation}\label{matrix1-GK}
A^*:=(a_{ik}^*):= A \circ (\adj A)^T -(\det A)\I,
\end{equation}
where $\circ$ represents the entrywise product of matrices. Theorem~\ref{GK-thm} is concerned with the partial row sums along the first row of $A^*,$ i.e., Gantmacher--Krein inequalities~\eqref{GK-ineq} is the same as, 
\begin{align}\label{matrix1-GK1}
(-1)^{1+l}\sum_{k=1}^{l} a_{1k}^*\geq 0, \mbox{ for all }l\in \{1,\ldots,n\}.
\end{align}

Since $A':=(a_{n+1-j,n+1-k})$ is also TN, whenever $A=(a_{jk})$ is, it would be easy to see that \eqref{matrix1-GK1} holds along the last row of $A^*$ as well. However, it is not immediate how this extends to the other rows in $A^*.$ We provide this extension which refines the Laplace identity \eqref{Laplace-iden} for all $i=j$ cases over totally nonnegative matrices. On the other hand, the Laplace identity~\eqref{Laplace-iden} for all $i\neq j$ cases can be obtained by using index-row operations $\Ro_{(u,v)}.$ We introduce these in Section~\ref{section-row-op} and see that $\Ro_{(u,v)}$ are operations that act on inequalities and produce valid inequalities over TN matrices. Below we state the complete refinement.


\begin{utheorem}\label{Add-thm-1}
Let $n,l\geq 1$ be integers, and suppose $A:=(a_{ij})$ is an $n\times n$ totally nonnegative matrix. Then the following inequalities hold: 
\begin{itemize}
\item[$(1)$] If $i\in [n],$ and $a_{ik}^*$ is defined as in \eqref{matrix1-GK} for $A,$ then
\begin{eqnarray}\label{Add-thm-1-ineq-1}
(-1)^{i+l}\sum_{k=1}^{l}a_{ik}^* \geq 0, \mbox{ for all }l\in \{1,\ldots,n\}.
\end{eqnarray}  

\item[$(2)$] If $i,j\in [n]$ such that $i\neq j,$ then
\begin{eqnarray}\label{Add-thm-1-ineq-2}
(-1)^{j+l}\sum_{k=1}^{l}(-1)^{j+k}a_{ik}\det A_{jk} \geq 0, \mbox{ for all }l\in \{1,\ldots,n\},
\end{eqnarray}
where $A_{jk}$ is obtained upon omitting the $j^{\mbox{th}}$ row and $k^{\mbox{th}}$ column in $A.$ 
\end{itemize}
\end{utheorem}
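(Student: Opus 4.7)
The plan is to establish Part $(1)$ as the substantive refinement of the Gantmacher--Krein fluctuating inequalities~\eqref{GK-ineq} to every row of a TN matrix, and then derive Part $(2)$ as a consequence by means of the set row operations $\Ro_{(u,v)}$ to be introduced in Section~\ref{section-row-op}.

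For Part $(1)$, the boundary rows are essentially known: $i=1$ is exactly Theorem~\ref{GK-thm}, while $i=n$ follows by applying Theorem~\ref{GK-thm} to the reversed matrix $A' = (a_{n+1-j,\,n+1-k})$, which is also TN, as noted in the paragraph after \eqref{matrix1-GK}. For an interior row $1 < i < n$, unpacking the definition of $a^*_{ik}$ and invoking the Laplace identity~\eqref{Laplace-iden} with $j=i$ recasts the partial sum as $(-1)^{i+l}\det B_{i,l}$ when $l < i$, and as a tail of the row-$i$ Laplace expansion when $l \ge i$, where $B_{i,l}$ is the matrix agreeing with $A$ except that row $i$ is truncated to $(a_{i1}, \ldots, a_{il}, 0, \ldots, 0)$. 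Expanding $\det B_{i,l}$ by Laplace on the first $l$ columns rewrites it as an alternating sum of products of two complementary minors of $A$, indexed by subsets $I \ni i$ of size $l$. I plan to establish non-negativity of this alternating sum by invoking the planar-network description of $A$ from Section~\ref{Section-TN} (via the bidiagonal factorization and Lindstr\"om's lemma): each minor is realized as a weighted count of vertex-disjoint path families, and a Lindstr\"om--Gessel--Viennot-style sign-reversing involution on intersecting pairs of paths collapses the alternating sum to a manifestly non-negative weighted count of non-intersecting path families. A density argument reduces to the invertible totally nonnegative case, where the planar-network machinery applies cleanly.

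For Part $(2)$, with Part $(1)$ in hand, the strategy is to apply the set row operation $\Ro_{(u,v)}$ of Section~\ref{section-row-op} with $u = i$ and $v = j$ to the inequality \eqref{Add-thm-1-ineq-1}. By design, $\Ro_{(u,v)}$ sends valid TN determinantal inequalities to valid ones and relabels the deleted-row index in $\det A_{ik}$ from $i$ to $j$, adjusting the cofactor sign correspondingly, while leaving the scalar factor $a_{ik}$ untouched. The resulting inequality is exactly \eqref{Add-thm-1-ineq-2}; note that the $-\det A$ correction coming from the diagonal of $A^*$ drops out in this transformation because, by the off-diagonal case of Laplace~\eqref{Laplace-iden}, the full cofactor sum $\sum_{k=1}^{n}(-1)^{j+k}a_{ik}\det A_{jk}$ vanishes when $i \neq j$, rather than equaling $\det A$.

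The principal obstacle is the non-negativity of the alternating sum of products of minors arising in Part $(1)$ for interior rows. The Gantmacher--Krein argument for $i=1$ exploits the distinguished boundary position of the top row, and this positional advantage is lost for $1 < i < n$; relying on the combinatorial symmetry afforded by the planar network appears to be essential. A secondary, bookkeeping, point is the uniform treatment of the two regimes $l < i$ and $l \ge i$, which is precisely what the $-(\det A)\,\I$ subtraction in the definition \eqref{matrix1-GK} of $A^*$ is designed to absorb. Finally, Part $(2)$ rests on the preservation property of $\Ro_{(u,v)}$ to be established in Section~\ref{section-row-op}; once that property is available, Part $(2)$ is an immediate corollary of Part $(1)$.
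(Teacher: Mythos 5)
Your Part $(2)$ strategy matches one of the paper's two proofs: derive the $i\neq j$ cases from Part $(1)$ (equivalently from Gantmacher--Krein) by set row operations. Two small points there: $\Ro_{(u,v)}$ is only defined for \emph{consecutive} $u,v$, so for general $i\neq j$ you must compose row operations — which is exactly what Lemma~\ref{Main-lemma-3} packages; and the single operation that leaves $a_{ik}$ fixed and sends $\det A_{ik}$ to $\det A_{jk}$ is $\Ro_{(j,i)}$, not $\Ro_{(i,j)}$ (the latter changes the $1\times1$ factor instead). These are bookkeeping slips, not conceptual errors.

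For Part $(1)$, however, there is a genuine gap. Your setup is correct and matches the paper's: reduce to $l\le i-1$ via the reversal $A'=(a_{n+1-p,\,n+1-q})$ and the vanishing of the full row sum, and reformulate the target as $(-1)^{i+l}\det B_{il}\ge 0$ for the truncated matrix $B_{il}$. But the step that actually carries the weight — establishing $(-1)^{i+l}\det B_{il}\ge 0$ — you leave to ``a Lindstr\"om--Gessel--Viennot-style sign-reversing involution'' which you do not exhibit, and you candidly flag this as the principal obstacle. That is precisely where the proof is: the generalized Laplace expansion of $\det B_{il}$ over subsets $I\ni i$ of size $l$ produces an alternating sum of products of complementary TN minors, and neither the sign bookkeeping for that expansion nor a concrete involution that cancels the crossing configurations is spelled out. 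Note also that $B_{il}$ itself is \emph{not} TN, so one cannot apply Lindstr\"om's lemma to it directly; the involution would have to act on pairs of vertex-disjoint path families realizing $\det A(I|[1,l])$ and $\det A([n]\setminus I|[l+1,n])$, and getting the signs to reverse correctly for interior $i$ is nontrivial. The paper instead closes this step with a short Sylvester-identity argument: swap a row and column of $B_{il}$ to form a matrix $C$, apply Sylvester's identity to $C$, rewrite it back in terms of $B_{il}$ and minors of $A$, and induct on the matrix size. Your planar-network plan could in principle succeed (the paper does carry out an intricate planar-network argument for the related but cleaner boundary configuration $P=Q^{\mathsf{c}}=[1,d]$ in Lemma~\ref{Add-thm-3-lemma}), but as written it is a proposal, not a proof, and the paper's Sylvester/induction route is both different and complete.
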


\begin{remark}[Hierarchy among inequalities]\label{Hierarchy-rem-0}
Note that, Theorem~\ref{Add-thm-1}(1) for $i=1$ specializes to the Gantmacher--Krein inequality~\eqref{GK-ineq}. Moreover, as explained later (in Section~\ref{Proofof-Section-add-ineq}), these index-row operations $\Ro_{(u,v)}$ help derive Theorem~\ref{Add-thm-1}(2) for all $i \neq j$ from Theorem~\ref{Add-thm-1}(1) for any $i'$ -- but not in the reverse direction. (In fact, any $i \neq j$ can be derived from any $i'\neq j'$ in (2) -- see Lemma~\ref{Main-lemma-3}(2).) Thus, these index-row (and index-column) operations reveal a ``hierarchy'' among these additive inequalities for TN matrices, in which the inequalities in part (1) of Theorem \ref{Add-thm-1} are ``higher'' than those in part (2). (We will recall these in Remarks~\ref{Hierarchy-remark} and \ref{FGJ-Skan-comp-remark}.)
\end{remark}

Theorem~\ref{Add-thm-1} precisely articulates that the partial sums on the left hand side of the Laplace identity \eqref{Laplace-iden} \textit{subtly} fluctuate about the corresponding right hand side over all TN matrices. This can be seen in further generality by observing that the Laplace identity \eqref{Laplace-iden} for $i\neq j$ are special cases of the classical Karlin's identity \eqref{Karlin-iden}. Since there is a refinement of Laplace identity in terms of Theorem~\ref{Add-thm-1}, it is natural to seek the refinement of Karlin's identity along similar lines. We list a few notations to continue further:

\begin{defn}\label{Defn-Karlin} Here onwards we shall adopt the following notation:
\begin{itemize}
\item[$(a)$] Suppose $m,n\geq 0$ are integers such that $m\leq n,$ then define 
\begin{align*}
[m,n]:=\{m,\ldots,n\},~ [n]&:=\{1,\ldots,n\} \mbox{ whenever }n\geq 1, \mbox{ and } [0]:=\{0\}.
\end{align*}
\item[$(b)$] Suppose $n\geq 1$ is an integer. Then for any set $S\subseteq [n],$ define 
\begin{align*}
S^{\mathsf{c}}:=[n]\setminus S, \mbox{ and use }|S| \mbox{ to denote the number of elements in }S.
\end{align*} 
\item[$(c)$] Suppose $n\geq 1$ be an integer. For all $S,T\subseteq [n]$ and any $n\times n$ matrix $A,$ we let $A\big{(}S\big{|}T\big{)}$ denote the submatrix with row indices in $S$ and column indices in $T$. We further use $A\big{(}S\big{)}$ to denote $A\big{(}S\big{|}S\big{)},$ and if $S$ is empty, then $\det A \big{(}S\big{)}:=1.$
\end{itemize}
\end{defn}

Now we state Karlin's identity:

\begin{theorem}[Karlin \cite{K2},~M\"{u}hlbach \cite{M90}]\label{Karlin-thm}
Let $n\geq 1$ be an integer. Suppose $T$ is a subset of $[n]$ and $V:=\{v_1<\cdots<v_m \}:=[n]\setminus T.$ Then for all $p \in [n]$ and set $S,$ such that $S\subseteq [n] \setminus \{p\}$ and $|S|=|T|+1,$
\begin{equation}\label{Karlin-iden}
\sum_{k=1}^{m} (-1)^{1+k} \det A\big{(}S\big{|}T\cup \{v_k\}\big{)} \det A\big{(}[n]\setminus\{p\}\big{|}[n]\setminus\{v_k\}\big{)}=0,
\end{equation}
for all $n\times n$ real matrices $A.$
\end{theorem}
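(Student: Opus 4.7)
The plan is to derive Karlin's identity~\eqref{Karlin-iden} by two successive applications of the ``vanishing determinant with two identical rows/columns'' principle. First, I would Laplace-expand each first-factor minor $\det A\big(S \big| T \cup \{v_k\}\big)$ along its one new column $v_k$. The position of $v_k$ inside the increasingly ordered set $T \cup \{v_k\}$ equals $(v_k - k) + 1$---since exactly $k-1$ elements of $V$ lie below $v_k$---and this collapses the resulting cofactor sign $(-1)^{1+k+\mathrm{pos}(v_k)}$ to the clean $(-1)^{v_k}$. Swapping the order of summation and setting $U := [n] \setminus \{p\}$, the left-hand side of \eqref{Karlin-iden} becomes
\[
\sum_{s \in S} (-1)^{\mathrm{pos}_S(s)} \det A\big(S \setminus \{s\} \big| T\big) \cdot \Sigma_s, \qquad \Sigma_s := \sum_{v \in V} (-1)^v a_{s,v} \det A\big(U \big| [n] \setminus \{v\}\big).
\]

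Next, since $s \in S \subseteq [n] \setminus \{p\}$ forces $s \neq p$, the matrix obtained from $A$ by overwriting its $p$-th row with its $s$-th row has two identical rows; its vanishing Laplace expansion along row $p$ yields $\sum_{l \in [n]} (-1)^l a_{s,l} \det A\big(U \big| [n] \setminus \{l\}\big) = 0$, which lets me rewrite $\Sigma_s = -\sum_{t \in T} (-1)^t a_{s,t} \det A\big(U \big| [n] \setminus \{t\}\big)$. Substituting back and swapping sums once more, the left-hand side of \eqref{Karlin-iden} equals
\[
-\sum_{t \in T} (-1)^t \det A\big(U \big| [n] \setminus \{t\}\big) \cdot \sum_{s \in S} (-1)^{\mathrm{pos}_S(s)} a_{s,t} \det A\big(S \setminus \{s\} \big| T\big).
\]

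Finally, for each fixed $t \in T$, the inner $s$-sum is (up to the overall sign $(-1)^{|T|+1}$) the Laplace expansion along the last column of the $|S| \times |S|$ matrix obtained by appending the column $A(S | \{t\})$ to $A(S | T)$. Since $t \in T$, this appended column duplicates a column already present in $A(S | T)$, forcing its determinant---and hence every inner $s$-sum---to vanish. Therefore \eqref{Karlin-iden} follows. The only delicate step in the argument is the sign bookkeeping in the very first Laplace expansion; once the cofactor sign is recognized as $(-1)^{v_k}$ via the combinatorial identity $\mathrm{pos}_{T \cup \{v_k\}}(v_k) = v_k - k + 1$, the rest is simply two invocations of the ``two equal rows/columns'' principle and poses no further obstacle.
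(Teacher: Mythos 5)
Your proof is correct. The paper cites this result to Karlin and M\"{u}hlbach and does not reproduce a proof, so there is no in-paper argument to compare against; your argument stands on its own and is sound. Each step checks out: the column $v_k$ sits at position $v_k - k + 1$ in $T\cup\{v_k\}$, so after multiplying by the prefactor $(-1)^{1+k}$ the column-part of the cofactor sign does reduce to $(-1)^{v_k}$ (the remaining $(-1)^{\mathrm{pos}_S(s)}$ coming from the row position, which you correctly retain in the displayed formula); the vanishing of $\sum_{l}(-1)^{l}a_{s,l}\det A(U\,|\,[n]\setminus\{l\})$ is exactly the Laplace expansion of a matrix with rows $s$ and $p$ equal, divided by $(-1)^{p}$; and after the final swap of sums, each inner $s$-sum is, up to the sign $(-1)^{|S|}$, the determinant of the square matrix $\bigl[A(S\,|\,T)\;\big|\;A(S\,|\,\{t\})\bigr]$, which vanishes because $t\in T$ makes two columns equal. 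One tiny cosmetic point: the phrase ``the resulting cofactor sign $(-1)^{1+k+\mathrm{pos}(v_k)}$'' is slightly loose, since the cofactor sign properly includes the row index $\mathrm{pos}_S(s)$ as well; the displayed formula keeps that factor separately, so the mathematics is unaffected.
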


As observed previously, Karlin's identity \eqref{Karlin-iden} encodes the Laplace identity \eqref{Laplace-iden} for all $i\neq j$. This can be seen by taking $S=\{i\},$ $T=\emptyset,$ and $p=j,$ in Theorem~\ref{Karlin-thm}. This attribute of Karlin's identity \eqref{Karlin-iden} and Theorem~\ref{Add-thm-1}\eqref{Add-thm-1-ineq-1} intrigued us to investigate if, like the Laplace identity~\eqref{Laplace-iden}, Karlin's identity accommodates sequence of inequalities as well over TN matrices. Using the preliminary discussion in Sections~\ref{Section-TN} and novel ideas in~\ref{section-row-op}, we found this speculation to be correct, and below is the precise result.

\begin{utheorem}\label{Add-thm-2}
Let $n\geq 1$ be an integer. Suppose $T$ is a subset of $[n]$ and $V:=\{v_1<\cdots<v_m \}:=[n]\setminus T.$ Then for all $p\in [n]$ and set $S,$ such that $S\subseteq [n] \setminus \{p\}$ and $|S|=|T|+1,$
\begin{equation}\label{Add-thm-2-ineq}
(-1)^{1+l}\sum_{k=1}^{l} (-1)^{1+k} \det A\big{(}S\big{|}T\cup \{v_k\}\big{)} \det A\big{(}[n]\setminus\{p\}\big{|}[n]\setminus\{v_k\}\big{)}\geq 0,
\end{equation}
for all $l\in [m],$ and all $n\times n$ totally nonnegative matrices $A.$
\end{utheorem}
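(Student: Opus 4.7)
The plan is to derive Theorem~\ref{Add-thm-2} from Theorem~\ref{Add-thm-1} by iterated application of the set row and column operations $\Ro_{(u,v)}$ (and their column analogues) developed in Section~\ref{section-row-op}, exploiting precisely the hierarchy anticipated in Remark~\ref{Hierarchy-rem-0}. The anchor is the observation that when $T=\emptyset$, the Karlin expression in \eqref{Add-thm-2-ineq} collapses to the Laplace partial sum of \eqref{Add-thm-1-ineq-2}: writing $S=\{i\}$ and $p=j$, the left-hand side of \eqref{Add-thm-2-ineq} becomes
$$(-1)^{1+l}\sum_{k=1}^{l}(-1)^{1+k}a_{iv_k}\det A\big([n]\setminus\{j\}\big|[n]\setminus\{v_k\}\big),$$
whose sign is already controlled by Theorem~\ref{Add-thm-1}(2). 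This gives the base case $|T|=0$.

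For the inductive step, I would augment the row-index set $S$ and the column-index set $T$ one element at a time. The guiding idea is that the operations of Section~\ref{section-row-op} are designed to ``thicken'' a determinantal expression by adjoining an index to both the inner minor and the cofactor-type outer minor in a coordinated way, while leaving untouched both the alternating sign $(-1)^{1+k}$ attached to the summation index $k$ and the enumerating set $V=[n]\setminus T$. Each application preserves validity of the inequality over totally nonnegative matrices by the main results of Section~\ref{section-row-op}, and the outer fluctuating factor $(-1)^{1+l}$ is inherited unchanged. After $|T|$ such steps, the partial Laplace sum of Theorem~\ref{Add-thm-1}(2) is transformed precisely into the partial Karlin sum in \eqref{Add-thm-2-ineq}, completing the proof.

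Running parallel to this algebraic route, Section~\ref{Section-TN} offers a second avenue: via Lindstr\"om's lemma, every minor in \eqref{Add-thm-2-ineq} can be read as a weighted count of families of non-intersecting paths in the planar network associated to a bidiagonal factorization of $A$, and one can exhibit a sign-reversing involution that cancels the ``wrong-signed'' contributions to the partial Karlin sum. This combinatorial mechanism is precisely what underlies the validity of the operations $\Ro_{(u,v)}$, so the two perspectives converge.

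The principal obstacle I anticipate is the sign bookkeeping. Expanding $\det A\big(S\big|T\cup\{v_k\}\big)$ along the column indexed by $v_k$ introduces a sign $(-1)^{r_k}$, where $r_k$ is the rank of $v_k$ inside $T\cup\{v_k\}$, and $r_k$ varies with $k$. Reconciling this $k$-dependent sign with the clean, uniform $(-1)^{1+k}$ appearing in Karlin's identity, so that the partial-sum sign $(-1)^{1+l}$ survives each thickening step, is the delicate combinatorial check — and it is exactly the kind of bookkeeping that the set operation machinery of Section~\ref{section-row-op} is built to systematize.
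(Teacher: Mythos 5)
Your main route does not work, and the reason is structural. The set row/column operations $\Ro_{(u,v)}$ and $\Co_{(u,v)}$ of Section~\ref{section-row-op} act by \emph{swapping} an index $u$ for a consecutive index $v$ inside a row/column index set when possible; by Definition~\ref{set-op}, $I(u,v)$ always has the same cardinality as $I$. Consequently the sizes of all minors appearing in a determinantal expression are invariants of the entire toolkit. Your ``inductive step'' proposes to \emph{adjoin} an index to both $S$ and $T$, i.e.\ to pass from minors of order $|T|+1$ and $n-1$ to minors of order $|T|+2$ and $n-1$, but no composition of $\Ro_{(u,v)}$ and $\Co_{(u,v)}$ can change those orders. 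In fact the logical direction in the paper is the opposite of what you propose: Theorem~\ref{Add-thm-1}(2) is the special case $S=\{i\}$, $T=\emptyset$, $p=j$ of Theorem~\ref{Add-thm-2} and is \emph{deduced from it}; and Remark~\ref{Hierarchy-remark} explicitly notes that the Karlin inequalities are non-complementary and, via Lemma~\ref{Main-lemma-3}, are only derivable from one another \emph{within a fixed size of} $S$ and $T$, not across sizes. So there is no hierarchy leading upward from Theorem~\ref{Add-thm-1}(2) to the full Theorem~\ref{Add-thm-2}, and the anticipated ``sign bookkeeping'' is not the obstacle — the cardinality constraint is.

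Your secondary suggestion (read each minor as a weighted path count via Lemma~\ref{Lin-lemma} and build a sign-reversing involution) is also not what the paper does for this theorem, and you have not supplied the involution; as written it is a sketch of a hope rather than a proof. The paper's actual argument is a direct induction along the bidiagonal factorization of Theorem~\ref{TN-classification}: define $F^l_{(T,S,p)}(A)$ to be the partial Karlin sum, observe it vanishes on diagonal matrices, and use Notion~\ref{main-idea} to track how $F^l_{(T,S,p)}$ transforms when $A$ is replaced by $B(\I+w\El_{uv})$, splitting into cases according to whether $u\in T$, $u\in V$ with $v\in T$, or $u,v\in V$. In each case $F^l_{(T,S,p)}(A)$ equals $F^l_{(T,S,p)}(B)$ plus nonnegative multiples of quantities of the same shape (possibly with $T$ replaced by another set of the same size), and unrolling the factorization down to the diagonal matrix $D$ yields the sign $(-1)^{1+l}$. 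This is a genuinely different mechanism from set-operation hierarchy, and it is what makes the size-changing generalization possible.
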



Karlin's identity \eqref{Karlin-iden} refines into a sequence of inequalities that fluctuates about zero for totally nonnegative matrices. This beautifully aligns with the original Gantmacher--Krein inequalities \eqref{GK-ineq} and with its refinement Theorem~\ref{Add-thm-1}. These alignments intrigued us to examine further and seek other such fluctuating inequalities. Our next main result provides a novel undiscovered set of such inequalities for totally nonnegative matrices.

\begin{defn}[Disjoint union and a set ordering]
Suppose $n\geq 1$ is an integer and $P,$ $P_1,$ and $P_2$ are subsets of $[n].$ We say that:
\begin{itemize}
\item[$(a)$] $P=P_1\sqcup P_2$ provided $P=P_1\cup P_2$ and $P_1\cap P_2=\emptyset,$ and
\item[$(b)$] $P_1< P_2$ if every element in $P_1$ is \textit{strictly} less than those in $P_2.$
\end{itemize}
\end{defn} 

\begin{utheorem}\label{Add-thm-3}
Let $n,d\geq 1$ be an integer, and let $P,Q\subseteq[n]$ be nonempty with $|P|+|Q|=n.$ Suppose $d=|P|,$ and define the sets
\begin{align*}
J_{dl} := [n-d,n]\setminus\{n-d+l\}, \mbox{ for all }l\in [0,d].
\end{align*}
Suppose $P=P_{1}\sqcup P_{2}$ and $Q=Q_1\sqcup Q_2$ such that $P_1 < P_2$ and $Q_1 < Q_2.$ If $Q_1 \subseteq P_1$ and $P_2 \subseteq Q_2,$ then
\begin{align}\label{Add-thm-3-ineq}
(-1)^{1+l}\sum_{k=0}^{l} (-1)^{1+k} \det A\big{(}P\big{|}J_{dk}\big{)} \det A\big{(}Q\big{|}[n]\setminus J_{dk}\big{)} \geq 0,
\end{align}
for all $l\in [0,d],$ and all $n\times n$ totally nonnegative matrices $A.$
\end{utheorem}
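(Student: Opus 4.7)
The plan is to prove Theorem~\ref{Add-thm-3} by first treating a canonical base case and then transporting the inequality to the general configuration via the set row/column operations introduced in Section~\ref{section-row-op}. In the base case I take $P_2=Q_1=\emptyset$, so $P=P_1$, $Q=Q_2$, $P_1<Q_2$, and $|P|+|Q|=n$; this forces $P=[d]$ and $Q=[d+1,n]$. For general $P,Q$ with $Q_1\subseteq P_1$ and $P_2\subseteq Q_2$, the overlap $Q_1\cup P_2\subseteq P\cap Q$ is exactly the kind of row duplication that the operators $\Ro_{(u,v)}$ are designed to realize. Since by Section~\ref{section-row-op} these operators send valid determinantal inequalities over TN matrices to valid determinantal inequalities, iterated application transports the base inequality to the general statement.

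For the base case $P=[d]$, $Q=[d+1,n]$, I would pass to the planar network representation of $A$ discussed in Section~\ref{Section-TN}. By Lindstr\"om's lemma each product $\det A(P|J_{dk})\det A(Q|[n]\setminus J_{dk})$ equals the nonnegatively weighted enumeration of pairs $(\pi_P,\pi_Q)$ of nonintersecting path families --- $\pi_P$ from sources $[d]$ to sinks $J_{dk}$ and $\pi_Q$ from sources $[d+1,n]$ to sinks $[n]\setminus J_{dk}$. Because the variable sink lies in the block $[n-d,n]$ while the sinks in $[1,n-d-1]$ always belong to the $Q$-family, the partial sum
\[
(-1)^{1+l}\sum_{k=0}^{l}(-1)^{1+k}\det A(P|J_{dk})\det A(Q|[n]\setminus J_{dk})
\]
becomes a signed enumeration of such pairs in which the ``switching'' sink occupies position $n-d+k$ for some $k\in[0,l]$. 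A Lindstr\"om--Gessel--Viennot sign-reversing involution on the combined family $\pi_P\cup\pi_Q$ cancels all contributions of pairs containing a crossing between a $P$-path and a $Q$-path; what remains are nonintersecting configurations, and tracking the truncation $k\leq l$ should show that the uncanceled pairs are exactly those whose ``boundary'' path terminates at the sink $n-d+l$, all contributing with the uniform sign $(-1)^{1+l}$.

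The principal obstacle is this boundary analysis of the truncated involution: characterizing the uncanceled configurations at the prefix $k\in[0,l]$ and verifying that they carry a common sign. A secondary technical point is to ensure that the operators $\Ro_{(u,v)}$ act coherently on the entire alternating partial sum and not merely on individual terms, so that the inherited inequality retains its \emph{fluctuating} shape; this should follow from the linearity of these operators as maps on formal determinantal expressions, to be developed when $\Ro_{(u,v)}$ is introduced in Section~\ref{section-row-op}.
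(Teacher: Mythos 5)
Your two-stage plan (prove a canonical base case, then transport via the row operations of Section~\ref{section-row-op}) is exactly the architecture the paper uses: Lemma~\ref{Add-thm-3-lemma} handles $P=Q^{\mathsf{c}}=[1,d]$, and the general case is reached by composing operators $\Ro_{(u,v)}$. Your transport step is also on the right track: the paper's Lemma~\ref{Main-lemma-1} is precisely the ``coherence'' statement you flag as needed, guaranteeing that applying $\Ro_{(u,v)}$ to the whole alternating expression preserves the sign. What the paper does additionally (and what your sketch glosses over) is the explicit construction, via order-preserving maps, of the specific composition $\Ro_{\beta_3}\circ\Ro_{\A_3}\circ\cdots\circ\Ro_{\A_1}$ that carries $\big(([1,d],[d+1,n])\big)$ to $\big((P,Q)\big)$; one also needs to verify which pairs $(P,Q)$ are actually reachable this way, which is how the hypotheses $Q_1\subseteq P_1$, $P_2\subseteq Q_2$ enter. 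Incidentally, your claim that $P_2=Q_1=\emptyset$ \emph{forces} $P=[d]$, $Q=[d+1,n]$ is not correct: with $P_2=Q_1=\emptyset$ the decomposition conditions are vacuous and there is no hypothesis relating $P_1$ to $Q_2$, so $P$ and $Q$ can be any sets with $|P|+|Q|=n$. The canonical base case $P=[1,d]$, $Q=[d+1,n]$ is simply the \emph{complementary} configuration (Definition~\ref{comp-ineq}), which is chosen because it sits at the top of the hierarchy discussed in Remark~\ref{Hierarchy-rem-1}, not because it is the only configuration with $P_2=Q_1=\emptyset$.

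The genuine gap is in the base case. You propose a Lindstr\"om--Gessel--Viennot sign-reversing involution on mixed path families and acknowledge that the ``boundary analysis of the truncated involution'' is the obstacle. This obstacle is real and you have not resolved it: a standard LGV involution pairs up crossing configurations for a \emph{fixed} source/sink assignment, whereas here the sink $n-d+k$ migrates between the $P$-block and the $Q$-block as $k$ ranges over $[0,l]$, so the involution must act across different summands and must respect the truncation at $k\le l$. It is not at all clear that the survivors of such an involution carry a uniform sign $(-1)^{1+l}$, and you explicitly say you have not verified it. The paper avoids this entirely: in the proof of Lemma~\ref{Add-thm-3-lemma} it works directly with the weighted planar network of Figure~\ref{Planar-bw}, introduces quantities $Q_{d1}$, $Q_{d2}$, $R^{u_{m_1}^{n-d+l+1}}_{[n-d+l,n-1]^{\uparrow 1}}$, $P^{d_{n-d+m_2}}_{n\to n-d+m_2}$ built from path weights, and proves \emph{by induction on $l$} a closed formula for the truncated alternating sum that is manifestly a nonnegative combination of edge weights. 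So the paper's base case is a direct computation, not a cancellation argument. Until you either carry out the truncated involution in detail or replace it with an explicit positive formula of this kind, the base case --- and hence the whole theorem --- remains unproven.
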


Theorem~\ref{Add-thm-3} is an attempt to refine the generalized Laplace identity (see \cite{M}) along the lines of Gantmacher--Krein inequality \eqref{GK-ineq}. The ensuing proof uses the identification of totally nonnegative matrices with planar networks to prove it first for $P=Q^{\mathsf{c}}=[1,d].$ Then to obtain the remaining inequalities corresponding to other values of $P$ and $Q,$ we make use of the inequality preserving index-row operations $\Ro_{(u,v)}$ (which we introduce in Section~\ref{section-row-op}). This, once again, gives us that hierarchy that we discussed in Remark~\ref{Hierarchy-rem-0}.

\section{Preliminaries: total nonnegativity and planar networks}\label{Section-TN}

Let $n\geq 1$ be an integer, and $u,v\in [n].$ We use $\I$ to denote the $n\times n$ identity matrix, and $\El_{uv}$ to refer to the basic $n\times n$ matrix with $1$ at $(u,v)$ position, and zero otherwise. We refer to $n\times n$ matrices of the form $\I+w\El_{uv},$ for $w\geq 0,$ as elementary bidiagonal provided $u$ and $v$ are consecutive. Using these elementary bidiagonal matrices, we can state the classification of square totally nonnegative matrices which we frequently use in this paper (see also \cite{Fal01, FZ2}):

\begin{theorem}[Whitney \cite{W}]\label{TN-classification}
A nonsingular totally nonnegative matrix $A$ can be factorized as $DE,$ where  $D$ is a diagonal matrix with positive diagonals, and $E$ factors into finitely many elementary bidiagonal matrices $\I+w\El_{uv},$ where $w\geq 0$ and $u,v$ are consecutive integers. More precisely, let $n\geq 1$ be an integer and $A$ an $n\times n$ nonsingular totally nonnegative matrix. Then
\begin{align}\label{TN-classification-eqn}
A=\prod_{j=1}^{n-1} \prod_{k=n-1}^{j} \big{(} \I+w_{j,k}\El_{k+1,k} \big{)}~D \prod_{j=n-1}^{1} \prod_{k=n-1}^{j} \big{(} \I+ w'_{j,k+1}\El_{k,k+1} \big{)}
\end{align}
where $D$ is a diagonal matrix with positive diagonals, and it can be made to appear at either end of the factorization or between any of the elementary bidiagonal factors; the factors are multiplied in the given order of $j$ and $k,$ and each $w_{j,k},w'_{j,k+1}\geq 0.$ In addition to this, the set of $n\times n$ nonsingular totally nonnegative matrices is dense in the set of $n\times n$ totally nonnegative matrices.
\end{theorem}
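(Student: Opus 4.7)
The plan is to prove the factorization \eqref{TN-classification-eqn} constructively via a Neville-type elimination, and to deduce the density statement separately using the short-term Pl\"{u}cker identity referred to in the introduction. The core technical lemma to establish first is: if $A$ is a nonsingular $n\times n$ TN matrix and $a_{k+1,1}>0$, then $a_{k,1}>0$, the ratio $w:=a_{k+1,1}/a_{k,1}$ is nonnegative, and $A':=(\I-w\El_{k+1,k})A$ remains TN with a zero in position $(k+1,1)$. The positivity of $a_{k,1}$ follows from the $2\times 2$ TN inequality $a_{k,1}a_{k+1,2}\geq a_{k,2}a_{k+1,1}$ combined with the no-zero-row condition forced by nonsingularity; the TN-preservation amounts to a minor-by-minor verification that subtracting a nonnegative multiple of row $k$ from the adjacent row $k+1$ leaves minors involving both rows (or neither) unchanged, and converts minors involving only row $k+1$ into a nonnegative combination of minors of $A$ via a standard $2\times 2$ identity.

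Iterating this elimination from bottom to top on column $1$ collects eliminators $M_{1,n-1},\ldots,M_{1,1}$ on the left of $A$; repeating on columns $2,\ldots,n-1$ (where only the lowest $n-j$ entries of column $j$ need to be cleared) reduces $A$ to an upper-triangular TN matrix, and the inverses of the accumulated left factors assemble in precisely the order of the first product in \eqref{TN-classification-eqn}. A symmetric procedure applied from the right, using elementary bidiagonals of the form $\I+w'_{j,k+1}\El_{k,k+1}$, further reduces the upper-triangular matrix to a diagonal matrix $D$; since nonsingularity is preserved throughout, $D$ has strictly positive diagonal entries. The asserted flexibility in the placement of $D$ follows from the commutation identity $D(\I+w\El_{uv})=(\I+(d_u/d_v)w\El_{uv})D$, which merely rescales the weight while keeping it nonnegative, so $D$ can be pushed past any elementary bidiagonal factor.

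For density, given an arbitrary (possibly singular) TN matrix $A$, the plan is to build a family $A_\varepsilon\to A$ of nonsingular TN matrices by adjusting minors one at a time. The short-term (three-term) Pl\"{u}cker identity supplies, for any vanishing contiguous minor, an explicit small modification of two neighbouring entries that turns that minor strictly positive without decreasing any other minor; iterating over the finite family of contiguous minors (which suffice to witness total nonnegativity) produces a nonsingular TN perturbation within $O(\varepsilon)$ of $A$. The main obstacle in the whole argument is the TN-preservation claim of the elimination lemma: every submatrix of $A'$ must be shown nonnegative, and the cleanest routes are either a direct application of Sylvester's determinantal identity on suitable $2\times 2$ and $3\times 3$ blocks or, in the spirit of Section~\ref{Section-TN}, a path-rerouting argument on the planar-network representation of $A$. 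Once this lemma is secured, the remainder of the proof of \eqref{TN-classification-eqn} is bookkeeping, and the density claim then reduces to the Pl\"{u}cker perturbation just described.
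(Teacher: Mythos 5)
The paper does not prove this theorem; it is quoted from Whitney \cite{W} (with related references \cite{Fal01,FZ2,GP1,GP4}) and used throughout as a known black box, so there is no in-paper proof to compare yours against. That said, here is an assessment of the proposal on its own terms.

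Your overall architecture is the standard modern route (Neville elimination from the bottom in each column, assembling the left factors, then the symmetric procedure from the right, then pushing $D$ through using $D(\I+w\El_{uv})=(\I+(d_u/d_v)w\El_{uv})D$); that part is sound, including the derivation of $a_{k,1}>0$ from $a_{k+1,1}>0$ using the $2\times2$ minors in columns $\{1,j\}$ together with nonsingularity. However, the proposal contains two substantive gaps.

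First, the TN-preservation lemma. Your description -- that the new minors using row $k+1$ but not row $k$ become a ``nonnegative combination'' of minors of $A$ -- is misleading: they become the \emph{difference} $\det A(I\big{|}J)-w\det A(\tilde I\big{|}J)$, and showing this difference is nonnegative for the specific pivot ratio $w$ (and using that the entries below the pivot have already been cleared) is precisely the nontrivial content of the lemma. You flag it as the ``main obstacle,'' which is fair, but both escape routes you propose are problematic: (a) Sylvester's identity on $2\times2$/$3\times3$ blocks does not obviously reach minors of arbitrary order; a fully developed Neville-elimination argument (as in Gasca--Pe\~{n}a \cite{GP1}) is needed, typically involving the shadow structure of zeros produced during elimination; and (b) the planar-network re-routing route is circular here, since the planar-network model of TN matrices used in Section~\ref{Section-TN} is itself derived \emph{from} Whitney's factorization via Lindstr\"om's Lemma~\ref{Lin-lemma}. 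You cannot presuppose the network picture to prove the theorem that justifies it.

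Second, the density statement. The plan to ``adjust minors one at a time'' by modifying two entries via the short-term Pl\"ucker identity ``without decreasing any other minor'' does not work: changing an entry $a_{ij}$ increases some minors and decreases others (the sign of $\partial\det A(I\big{|}J)/\partial a_{ij}$ alternates with the position of $(i,j)$ in $(I,J)$), so there is no reason such a local perturbation keeps all other minors nonnegative, let alone strictly positive, and no termination argument. The standard route is global: pick a totally positive (hence nonsingular TN) matrix $G_\varepsilon\to \I$ as $\varepsilon\to 0$ (e.g., with entries $e^{-(i-j)^2/\varepsilon}$ suitably normalized), and set $A_\varepsilon:=G_\varepsilon A G_\varepsilon$; by Cauchy--Binet, $A_\varepsilon$ is totally positive, hence nonsingular TN, and $A_\varepsilon\to A$. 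This avoids any entry-by-entry bookkeeping. I would replace your density paragraph with this argument, and replace the two proposed routes for TN-preservation with a genuine Neville-elimination argument (or a reference to it), taking care not to invoke the planar-network picture before it is available.
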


Based on this classification, totally nonnegative matrices can be recognized as weighted planar networks (in Figures~\ref{Smaller figure} and \ref{Planar-bw}).

\begin{defn}[Planar network and weighted sums]\label{PlanarNet-defn}
Let $n\geq 1$ be an integer.
\begin{itemize}
\item[$(1)$] A weighted planar network of order $n$ is an acyclic directed planar graph $G=(V,E),$ with nonnegative weights on the edges, from source vertices $S$ to sink vertices $T$ with $|S|=|T|=n,$ such that the planar diagram can be realised ensuring that all the edges are line segments.
\item[$(2)$] Suppose $G=(V,E)$ is a weighted planar network of order $n$ with source $S$ and sink $T,$ each labelled $1,\ldots , n.$ For all $I,J\subseteq [n]$ such that $|I|=|J|,$ define
\begin{equation}\label{PlanarNet-Matrix-defn}
w_{IJ}:=\sum_{P:I\to J} w_{P},
\end{equation}
where the sum is taken over the collection of family of disjoint paths $P$ from vertices $I$ in source to vertices $J$ in sink, and where $w_P$ is the product of edge weights in the family of disjoint paths $P.$
\end{itemize}
\end{defn}

\begin{figure}[ht]
    \includegraphics[width=10.0cm, height=5cm]{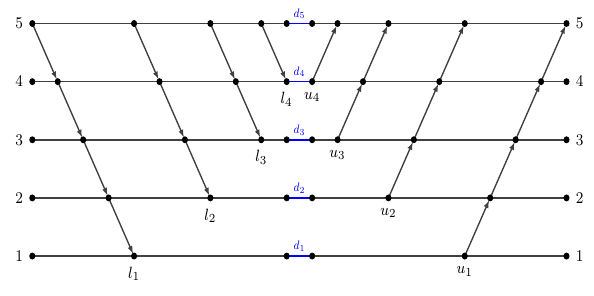}
    \caption{A planar network representing $5\times 5$ totally nonnegative matrices, where every edge has nonnegative weight and is directed from left to right.}
    \label{Smaller figure}
\end{figure}

The identification of totally nonnegative matrices and the weighted planar networks is established using the following:

\begin{lemma}[Gessel--Viennot \cite{GV}, Lindstr\"{o}m \cite{Lin73}]\label{Lin-lemma}
Let $n\geq 1$ be an integer, and let $G:=(V,E)$ be a weighted planar network of order $n.$ Suppose $A:=(a_{ij})$ is an $n\times n$ matrix such that $a_{ij}:=w_{\{i\}\{j\}},$ as in~\eqref{PlanarNet-Matrix-defn}. Then, minors of $A$ corresponding to row indices $I$ and column indices $J$ are given by 
$
\det A\big{(}I\big{|}J\big{)} = w_{IJ}. 
$
In particular, $A$ is totally nonnegative.
\end{lemma}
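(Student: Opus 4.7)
The plan is to establish the minor formula $\det A(I|J) = w_{IJ}$ by the classical Lindström--Gessel--Viennot argument: expand the determinant via the Leibniz formula, expand each entry as a sum over single paths, and construct a sign-reversing involution that cancels all contributions from non-disjoint path families. Total nonnegativity of $A$ then follows immediately, since $w_{IJ}$ is a sum of products of nonnegative edge weights.

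First, I would fix $I=\{i_1<\cdots<i_k\}$ and $J=\{j_1<\cdots<j_k\}$ and write
\[
\det A(I|J) = \sum_{\sigma \in S_k} \mathrm{sgn}(\sigma) \prod_{s=1}^{k} a_{i_s, j_{\sigma(s)}}
= \sum_{\sigma \in S_k} \sum_{(p_1,\ldots,p_k)} \mathrm{sgn}(\sigma) \prod_{s=1}^{k} w_{p_s},
\]
where in the inner sum $(p_1,\ldots,p_k)$ ranges over all $k$-tuples with $p_s$ a single directed path from $i_s$ to $j_{\sigma(s)}$. I would then partition this sum into two pieces: tuples whose paths are pairwise vertex-disjoint, and tuples in which some pair of paths shares at least one vertex. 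Since $G$ is planar and acyclic with the labels of sources and sinks in boundary order, every vertex-disjoint family from $I$ to a size-$k$ subset of sinks must be non-crossing, and therefore the endpoints are assigned in order. This forces $\sigma = \mathrm{id}$ on the disjoint part, and the contribution is exactly $w_{IJ}$.

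Next, I would cancel the remaining tuples via the standard tail-swap involution. Order the vertices of $V$ and the outgoing edges at each vertex. Given a tuple $(p_1,\ldots,p_k)$ with at least one pair of intersecting paths, let $s$ be the least index whose path meets another, let $t>s$ be the smallest index for which $p_t$ meets $p_s$, and let $v$ be the earliest shared vertex along $p_s$. Swap the portions of $p_s$ and $p_t$ strictly after $v$ to produce $(p_1',\ldots,p_k')$. This exchange preserves the multiset of edges, so $\prod_s w_{p_s'} = \prod_s w_{p_s}$; it changes $\sigma$ by the transposition $(s\ t)$, so the sign flips; and applying the same rule to the output returns the original tuple, so the map is an involution. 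All non-disjoint contributions therefore cancel in pairs, leaving $\det A(I|J) = w_{IJ}$.

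The main obstacle, and the step requiring the most care, is the verification of the involution: one must check that the swapped tuple still consists of valid directed paths (which uses acyclicity to ensure that $v$ is unambiguously the \emph{first} shared vertex along each path), that the pair $(s,t)$ chosen from $(p_1',\ldots,p_k')$ coincides with the original $(s,t)$ (so the operation is genuinely self-inverse), and that planarity together with the boundary order on sources and sinks actually forces every vertex-disjoint family from $I$ to $J$ to induce the identity permutation. Once these combinatorial points are in place, the identity $\det A(I|J) = w_{IJ} \ge 0$ is immediate, and since $I, J$ were arbitrary subsets of equal cardinality, $A$ is totally nonnegative.
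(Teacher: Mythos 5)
The paper does not prove Lemma~\ref{Lin-lemma}; it cites Gessel--Viennot and Lindstr\"{o}m and uses the result as a black box. Your overall plan (Leibniz expansion, expansion of entries as sums over single paths, cancellation of non-disjoint families by a sign-reversing tail swap, planarity forcing $\sigma=\mathrm{id}$ on the disjoint part) is the standard and correct route.

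However, the tail-swap involution as you describe it is broken, and you have identified precisely the step that needs the most care while getting its details wrong. You choose $s$ least, then $t>s$ least such that $p_t$ meets $p_s$, and only then a vertex $v$ on $p_s$. If $v$ is understood as the earliest vertex on $p_s$ shared with any path, it need not lie on $p_t$, and ``swap the tails of $p_s$ and $p_t$ after $v$'' is undefined. If $v$ is the earliest vertex on $p_s$ shared with $p_t$ specifically, the map fails to be an involution. Concretely, take three paths with $p_1\cap p_3=\{v\}$, $p_2\cap p_3=\{w\}$, $p_1\cap p_2=\emptyset$, and $v$ occurring before $w$ on $p_3$. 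Here $s=1$, $t=3$, and you swap at $v$; but the new $p_1'$ inherits $p_3$'s tail and now passes through $w$, so $p_1'$ meets $p_2$. Re-applying your rule to the new tuple gives $t=2$ (not $3$) and a swap at $w$, which does not return the original tuple. The fix is to reverse the order of the last two choices: after fixing $s$, let $v$ be the first vertex along $p_s$ that lies on some other path, and then let $t$ be a canonical index (say, the smallest) among paths through $v$. Because the segment of $p_s$ before $v$ meets no other path and the swap preserves the set of paths through $v$, this choice is stable under the swap, giving a genuine involution. Once this is corrected, the rest of your argument (sign flip by the transposition $(s\,t)$, weight preservation, acyclicity preventing repeated vertices, planarity and boundary order forcing $\sigma=\mathrm{id}$ on disjoint families) goes through.
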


The planar networks in Figures~\ref{Smaller figure} and \ref{Planar-bw} correspond respectively to all $5\times 5$ and $n\times n$ TN matrices, and are constructed using Lindstr\"{o}m's Lemma~\ref{Lin-lemma} and Whitney's classification Theorem~\ref{TN-classification}. We briefly describe this in the subsection below:

\subsection*{Construction of planar networks}\label{Construction-TN}
Suppose $n\geq 1$ is a nonnegative integer, and $u,v\in [n]$ consecutive integers. Now consider the elementary bidiagonal factor $\I+w\E_{uv},$ with $w>0,$ mentioned in Theorem~\ref{TN-classification}. This factor is TN and the corresponding planar network is a graph on $2n$ vertices, which forms the source vertices labelled $1,2,\ldots,n,$ and the sink vertices labelled $1,2,\ldots,n.$ The directed edges are tuples with the first component being a source vertex and the second component being a sink vertex. These edges form the set 
$$
\big{\{}(k,k):k\in [n]\big{\}}\cup\big{\{}(u,v)\big{\}}, \mbox{ where edges }(k,k) \mbox{ weigh } 1, \mbox{ and edge } (u,v) \mbox{ weighs }w.
$$
Similarly, the planar network for a diagonal matrix $D:=\diag(d_1,\cdots,d_n),$ with each $d_{k} > 0,$ is a directed graph on $2n$ vertices with the source and sink vertices labelled $1,2,\ldots,n$ each. The corresponding directed edges are 
$$
\big{\{}(k,k):k\in [n]\big{\}}, \mbox{ where the edge } (k,k) \mbox{ weighs } d_k \mbox{ for all }k\in \{1,2,\ldots,n\}.
$$

\begin{figure}[ht]
    \includegraphics[width=14.5cm, height=10.2cm]{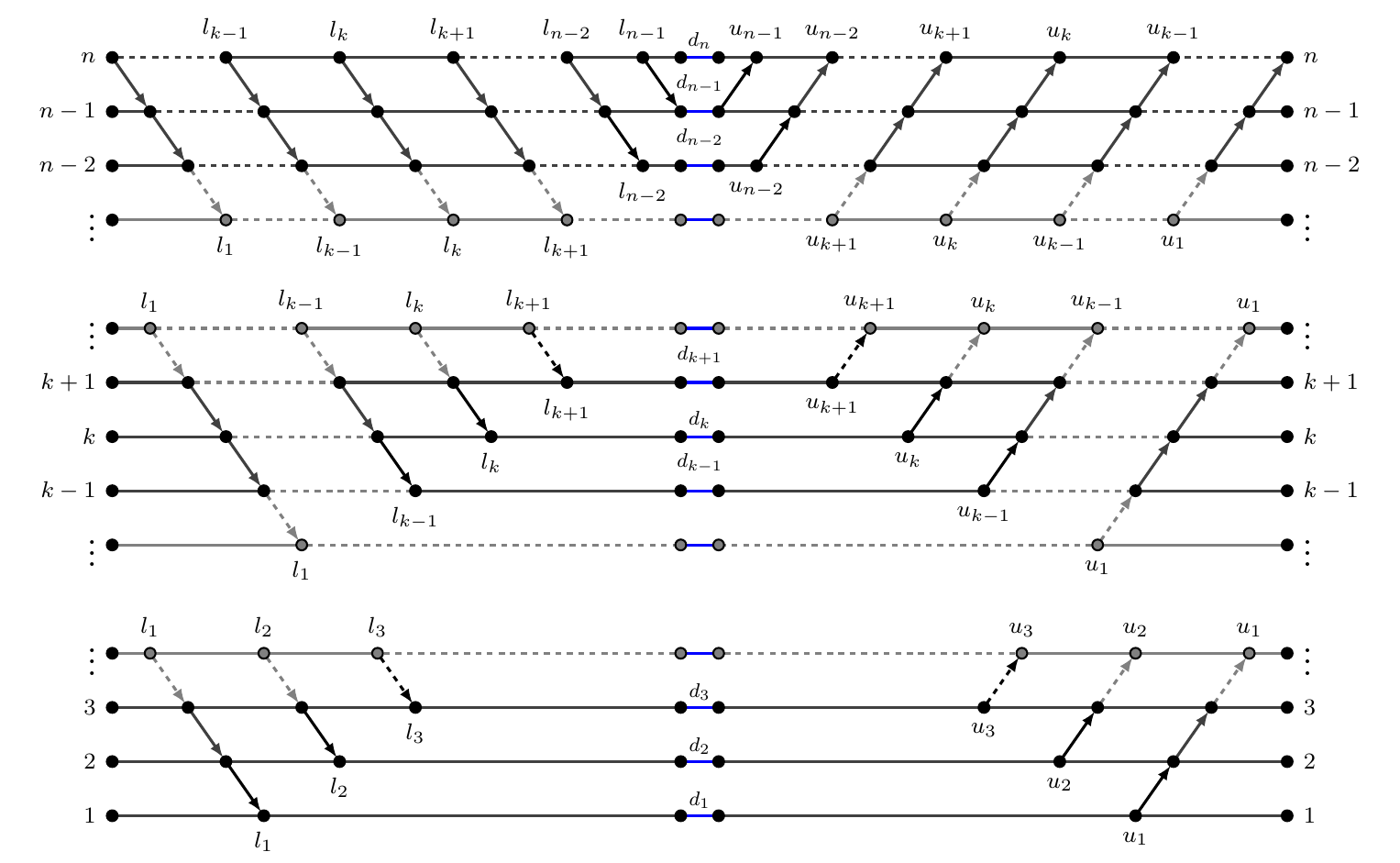}
    \caption{A planar network representing $n\times n$ totally nonnegative matrices, where every edge has nonnegative weight and is directed from left to right.}
    \label{Planar-bw}
\end{figure}

The Cauchy--Binet formula (see \cite{HJ1}) implies that the planar network corresponding to any nonsingular $n\times n$ TN matrix $A$ is the concatenation of planar networks for elementary bidiagonal factors $\I+w\E_{uv}$ and diagonal matrices $D$ that appears in the bidiagonal factorization \eqref{TN-classification-eqn} of $A$ (see \cite{Fal01, FZ2}). To briefly elaborate on this:

\begin{itemize}
\item[$(a)$] Draw the planar networks for each of those factors.
\item[$(b)$] Then identify the sink vertices of each of the factors with the corresponding source vertices of the factor on its immediate right. For instance, connect sink vertex $k$ of the leftmost factor with the source vertex $k$ of the second leftmost factor, and the sink vertex $k$ of the second leftmost factor with the source vertex $k$ of the third leftmost factor, and so on, via directed edges with weight $1,$ for each $k.$
\item[$(c)$] To avoid redundancy, mention only the vertices that are at the intersection of a horizontal and a non-horizontal edge, as in Figures~\ref{Smaller figure} and~\ref{Planar-bw}.
\end{itemize}

Therefore, the weighted planar networks in Figure~\ref{Smaller figure} and \ref{Planar-bw} identifies respectively with all $5\times 5$ and $n\times n$ nonsingular TN matrices. The continuity of determinant and the density of nonsingular TN matrices ensure that it is enough to prove a determinantal identity or inequality for nonsingular TN matrices. We shall come back to Figure~\ref{Planar-bw} in Section~\ref{Proofof-Section-add-ineq} (for the proof of Theorem~\ref{Add-thm-3}) where we will describe and utilize various details in the network.

\begin{unotion}\label{main-idea}
This notion will frequently be used in this work, and can be proved using the well known Cauchy--Binet formula (see \cite{HJ1}). Suppose $n\geq 1,$ and $A$ is an $n\times n$ TN matrix. Let $w>0$ and $u,v\in [n]$ be consecutive integers, and define $B:=(\I+w\E_{uv})A.$ Then for all $I,J\subseteq [n]$ with $|I|=|J|,$ 
\begin{equation}\label{main-idea1}
\det B \big{(}I\big{|}J\big{)} = 
\begin{cases}
\det A \big{(}I\big{|}J\big{)} + w \det A \big{(}I(u,v)\big{|}J\big{)} & \mbox{ if } u\in I \mbox{ and } v\not\in I, \mbox{ and}\\
\det A\big{(}I\big{|}J\big{)} & \mbox{ otherwise},\\
\end{cases}
\end{equation}
where $
I(u,v):=
\begin{cases}
(I\setminus\{u\})\cup\{v\} & \mbox{ if }u\in I \mbox{ and } v\not\in I,\\
I & \mbox{ otherwise}.
\end{cases}
$ 

Similar would be the result if $B:=A(\I+w\E_{uv}).$ We leave the proof of these to the reader.
\end{unotion}

\section{Index-row/column operations preserving inequalities}\label{section-row-op}

A generic determinantal inequality for totally nonnegative matrices $A:=(a_{ij})$ is a linear combination of products of finitely many minors of the form $\det A\big{(}I\big{|}J\big{)}$ followed by a inequality sign $\geq$ and zero. This inequality can be altered to obtain another inequality over TN matrices by replacing each of the minors with $\det A\big{(}J\big{|}I\big{)}$ since the underlying matrix $A^T:=(a_{ji})$ is TN whenever $A$ is. Similarly we get another inequality by replacing each $\det A\big{(}I\big{|}J\big{)}$ with $\det A\big{(}n+1-I\big{|}n+1-J\big{)}$ (see Definition~\ref{cce-defn}$(4)$ for $n+1-I$ and $n+1-J$) since $A':=(a_{n+1-i,n+1-j})$ is TN whenever $A$ is. There are also other set theoretic operations which alter the row and column indices of the minors which naturally reshape into operations on inequalities that preserve the inequalities. These are discussed in \cite{FGJ03}, and are called {complement}, {shift}, {insertion}, and {deletion}. 

These operations have a very limited reach in obtaining novel inequalities from the designated known ones. For example, the additive inequalities in Section~\ref{Section-add-ineq} can not be obtained using any of the aforementioned/known set operations on any known inequality. This is essentially because none of these operations are meant to deal with row and column indices that are not common among all minors in the inequalities. Having said this, we should also acknowledge that a larger set of inequalities for totally nonnegative matrices would certainly make the discussion around these matrices more frictionless. The introduction of novel set operations is our attempt in that direction, while it also enables us to remove the limitations of the known set theoretic operations and provides further applications.

Considering that a generic determinantal inequality is a real linear combination of products of finitely many minors, we begin with defining a set operation that operates on ordered family of ordered collection of sets.

\begin{defn}[Set operation $\So_{(u,v)}$]\label{set-op}
Suppose $n\geq 1$ be an integers and $I\subseteq [n],$ and define for all consecutive $u,v\in [n]$:
$$
I(u,v):=
\begin{cases}
(I\setminus\{u\})\cup\{v\} & \mbox{ if }u\in I \mbox{ and } v\not\in I,\\
I & \mbox{ otherwise}.
\end{cases}
$$
Now, let $\mathcal{I}$ be an ordered nonnempty finite indexing set, and $\gamma:\mathcal{I} \to \Z_{>0}.$ Suppose $I_{ij}\subseteq [n]$ for all $i\in \mathcal{I}$ and $j\in [\gamma(i)].$ The set operation $\So_{(u,v)}$ is defined for all consecutive $u,v\in [n]$: 
\begin{align*}
\So_{(u,v)}\Big{(}\big{(}I_{i1},\ldots,I_{i\gamma(i)}\big{)}:i\in \mathcal{I}\Big{)}:=\Big{(}\big{(}I_{i1}(u,v),\ldots,I_{i\gamma(i)}(u,v)\big{)}:i\in \mathcal{I}\Big{)}.
\end{align*}
\end{defn} 

The set operation $\So_{(u,v)}$ can be seen as a refined form of the shift operation \cite{FGJ03}. We will use $\So_{(u,v)}$ as a set theoretic operation to mainly express ideas in proofs and theorems. For more profound consequences, and to apply upon determinantal expressions and inequalities, we define a refined form of $\So_{(u,v)}:$

\begin{defn}[Index-row operation $\Ro_{(u,v)}$]\label{row-op}
Suppose $n\geq 1$ is an integer, and let $\mathcal{I}$ be an ordered nonnempty finite indexing set, and $\gamma:\mathcal{I} \to \Z_{>0}.$ Suppose $I_{ij}\subseteq [n]$ for all $i\in \mathcal{I}$ and $j\in [\gamma(i)].$ The index-row operation $\Ro_{(u,v)}$ is defined for all consecutive $u,v\in [n]$:
\begin{align*}
\Ro_{(u,v)}\Big{(}\big{(}I_{i1},\ldots,I_{i\gamma(i)}\big{)}:i\in \mathcal{I}\Big{)}:=\Big{(}\big{(}I_{i1}(u,v),\ldots,I_{i\gamma(i)}(u,v)\big{)}:i\in \mathcal{I}(u,v)\Big{)},
\end{align*}
where,
\begin{align*}
i(u,v)&:=\big{\{}j\in [\gamma(i)]: u \in I_{ij} \mbox{ and } v \not \in I_{ij} \big{\}}, \mbox{ and }\\
\mathcal{I}(u,v)&:= \big{\{}i \in \mathcal{I}: |i(u,v)|= \max_{i\in \mathcal{I}} |i(u,v)| \big{\}}.
\end{align*}
We also define, (for Lemma~\ref{Main-lemma-2})
\begin{align*}
\mathcal{I}(u,v,k)&:=\big{\{}i \in \mathcal{I}: |i(u,v)|=k \big{\}} \mbox{ for } k\in \Z_{\geq 0}.
\end{align*}
\end{defn}

We are calling $\Ro_{(u,v)}$ an index-row operation (or just a row operation) because it can be seen as operating on the row index sets in the determinantal expressions and inequalities. When $\Ro_{(u,v)}$ acts on an inequality over TN matrices, then it produces a determinantal expression which forms a valid inequality. The following result validates this claim:

\begin{ulemma}\label{Main-lemma-1}
Suppose $n\geq 1$ is an integer, and let $\mathcal{I}$ be a nonempty indexing set, and $\gamma:\mathcal{I} \to \Z_{>0}.$ Suppose $I_{ij},J_{ij}\subseteq [n],$ such that $|I_{ij}|=|J_{ij}|,$ for $i\in \mathcal{I}$ and $j\in [\gamma(i)].$ Then for real $\lambda_{i},~i \in \mathcal{I},$ assertion $(1)$ implies $(2)$:
\begin{itemize}
\item[$(1)$] For all $n\times n$ totally nonnegative matrices $A,$
$$F_{\mathcal{I}}(A):=\sum_{i\in \mathcal{I}} \lambda_{i} \prod_{j=1}^{\gamma(i)} \det A\big{(}I_{ij}\big{|}J_{ij}\big{)}\geq 0.$$
\item[$(2)$] For all $n\times n$ totally nonnegative matrices $A,$  
\begin{align*}
G_{\mathcal{I}(u,v)}(A):=\sum_{i\in \mathcal{I} (u,v)} \lambda_{i} \prod_{j=1}^{\gamma(i)} \det A\big{(}I_{ij}(u,v)\big{|}J_{ij}\big{)}\geq 0,
\end{align*}
for all consecutive integers $u,v\in [n].$
\end{itemize}
This is under the convention that the sum and the product over empty set respectively are $0$ and $1.$ Moreover, a similar result is true where the column indices $J_{ij}$ are altered to $J_{ij}(u,v)$ in assertion $(2)$ (see Remark~\ref{Column-op}).  
\end{ulemma}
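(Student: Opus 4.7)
The plan is to derive assertion (2) from (1) by applying the hypothesis to the transformed matrix $B := (\I + w\E_{uv})A$, viewing $F_{\mathcal{I}}(B)$ as a polynomial in $w \geq 0$, and reading off its leading coefficient. The key tool is Notion~\ref{main-idea}: $B$ is TN whenever $A$ is (since $\I + w\E_{uv}$ is TN for consecutive $u,v$ and $w\geq 0$, and the class of TN matrices is closed under products by Cauchy--Binet), and each minor transforms as $\det B(I|J) = \det A(I|J) + w\,\det A(I(u,v)|J)$ precisely when $u \in I$ and $v \notin I$, and equals $\det A(I|J)$ otherwise.

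First I would fix a TN matrix $A$, consecutive $u,v \in [n]$, and $w \geq 0$, and form $B = (\I + w\E_{uv})A$; then $B$ is TN and so $F_{\mathcal{I}}(B) \geq 0$ by (1). Next, substituting the Notion~\ref{main-idea} formula into every factor $\det B(I_{ij}|J_{ij})$ and expanding, for each $i \in \mathcal{I}$ the product $\prod_{j=1}^{\gamma(i)} \det B(I_{ij}|J_{ij})$ becomes a polynomial in $w$ of degree exactly $|i(u,v)|$, whose leading coefficient is
\[
\prod_{j \in i(u,v)} \det A(I_{ij}(u,v)|J_{ij}) \cdot \prod_{j \notin i(u,v)} \det A(I_{ij}|J_{ij}) \;=\; \prod_{j=1}^{\gamma(i)} \det A(I_{ij}(u,v)|J_{ij}),
\]
since $I_{ij}(u,v) = I_{ij}$ on the complement of $i(u,v)$. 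Summing with the weights $\lambda_i$, the map $w \mapsto F_{\mathcal{I}}(B)$ is a polynomial in $w$ of degree $M := \max_{i \in \mathcal{I}} |i(u,v)|$ whose coefficient of $w^M$ is exactly $G_{\mathcal{I}(u,v)}(A)$, because only the indices $i$ with $|i(u,v)| = M$, i.e.\ those in $\mathcal{I}(u,v)$, contribute at the top degree.

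Finally, since this polynomial in $w$ is nonnegative for every $w \geq 0$, dividing by $w^M$ and sending $w \to \infty$ forces the leading coefficient to be nonnegative, and hence $G_{\mathcal{I}(u,v)}(A) \geq 0$ for every TN $A$, which is (2). The column analogue is obtained by repeating the argument with $B := A(\I + w\E_{uv})$: the right action transforms $J_{ij} \mapsto J_{ij}(u,v)$ and leaves row index sets untouched, while row sums and the maximizer set $\mathcal{I}(u,v)$ are defined using column occurrences. I expect the proof to be essentially routine once Notion~\ref{main-idea} is in hand; the only genuine bookkeeping point, and the likely main obstacle, is the clean identification of the top-degree coefficient of the polynomial $F_{\mathcal{I}}(B)$ with $G_{\mathcal{I}(u,v)}(A)$, which rests precisely on unwinding the definitions of $i(u,v)$ and $\mathcal{I}(u,v)$ together with the convention $I_{ij}(u,v) = I_{ij}$ off the indicator set.
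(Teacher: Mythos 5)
Your proof is correct and matches the paper's argument: both form $B=(\I+w\E_{uv})A$, view $F_{\mathcal{I}}(B)$ as a polynomial in $w$ whose coefficient of the top power $w^{M}$ (with $M=\max_{i}|i(u,v)|$) is exactly $G_{\mathcal{I}(u,v)}(A)$ by Notion~\ref{main-idea}, and then conclude nonnegativity of that coefficient from $F_{\mathcal{I}}(B)\geq 0$ for all $w\geq 0$, with the column case handled by right multiplication. The only cosmetic difference is that the paper first restricts to nonsingular TN $A$ and then invokes density and continuity, whereas you apply the expansion directly to all TN $A$, which is equally valid since Notion~\ref{main-idea} and the Cauchy--Binet formula do not require nonsingularity.
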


The operation $\Ro_{(u,v)}$ can be seen as a a set operation acting on the row indices of any inequality for TN matrices and yielding inequalities for TN matrices:
\begin{align*}
\sum_{i\in \mathcal{I}} \lambda_{i} \prod_{j=1}^{\gamma(i)} \det A\big{(}I_{ij}\big{|}J_{ij}\big{)}\geq 0~~\xRightarrow[]{\Ro_{(u,v)}}~~\sum_{i\in \mathcal{I}(u,v)} \lambda_{i} \prod_{j=1}^{\gamma(i)} \det A\big{(}I_{ij}(u,v)\big{|}J_{ij}\big{)}\geq 0,
\end{align*}
where each determinantal expressions forms an inequality over all $n\times n$ TN matrices. For efficiency we indicate this transformation by simply writing,
\begin{align*}
\Ro_{(u,v)}\Big{(}\big{(}I_{i1},\ldots,I_{i\gamma(i)}\big{)}:i\in \mathcal{I}\Big{)}=\Big{(}\big{(}I_{i1}(u,v),\ldots,I_{i\gamma(i)}(u,v)\big{)}:i\in \mathcal{I}(u,v)\Big{)},
\end{align*}
where notations above are as in Definition~\ref{row-op}. 

\begin{remark}[Significance of Lemma~\ref{Main-lemma-1}, and its ``converse'']\label{MainLemma-converse}
The discussion following Lemma~\ref{Main-lemma-1} so far implies that any composition of the index-row operations $\Ro_{(u,v)}$ applied on inequalities over TN matrices returns inequalities for TN matrices. This notion will be applicable in almost all the results in this paper (see Sections~\ref{Section-add-ineq} and \ref{Section-multi-results}). On the other hand, it is also natural to ask for the converse of this useful idea. We avoid conjecturing this, but the intended converse -- which can be seen as the converse or ``$(2)$ implies $(1)$'' in Lemma~\ref{Main-lemma-1} -- seems too good to be true in general. However, for the class of all the smallest multiplicative inequalities (see Section~\ref{Section-multi-results}) this converse is indeed true, and is stated in Theorem~\ref{Multi-thm-2}. This novel result provides an entirely different, yet natural outlook at the classification Theorem~\ref{FGJ-Skan-thm}, due to Fallat--Gekhtman--Johnson \cite{FGJ03} and Skandera \cite{Ska04}, for the smallest multiplicative inequalities  (see Remark~\ref{Multi-thm-2-significance} next to Theorem~\ref{Multi-thm-2}).
\end{remark}

\begin{remark}[Index-column operation $\Co_{(u,v)}$]\label{Column-op}
Since the transpose of totally nonnegative matrices is totally nonnegative, index-row operation $\Ro_{(u,v)}$ naturally births index-column operations $\Co_{(u,v)}.$ Therefore, the results that are true for $\Ro_{(u,v)}$ naturally extend to $\Co_{(u,v)}.$ We shall make the relevant remarks regarding the analogous index-column operation $\Co_{(u,v)}$ as needed.
\end{remark}

\begin{remark}[$\Ro_{(u,v)}$ acting on determinantal expressions]\label{MainLemma-action}
Henceforth, we will use the index-row operation $\Ro_{(u,v)},$ for consecutive integers $u,v\geq 1,$ as a set operation acting on the row indices of determinantal expressions (which might include signs like $+,-,\geq,\leq,=,$ etc.) and yielding determinantal expressions. Also, Lemma~\ref{Main-lemma-1} assures that if $\Ro_{(u,v)},$ for consecutive integers $u,v\geq 1,$ acts on a determinantal expression which forms an inequality over TN matrices, then it returns an inequality over TN matrices. The action of $\Ro_{(u,v)}$ on a determinantal expression is governed by Definition~\ref{row-op}, and is summarized below:
\begin{itemize}
\item[$(1)$] Swap row index $u$ with $v$ in all row index sets, whenever \textit{possible,} in the given determinantal expression.
\item[$(2)$] Identify all those product of minors in which this swapping has happened the maximum number of times.
\item[$(3)$] Finally, replace all those product of minors with zero for which the number of those swapping has happened any fewer number of times.
\item[$(4)$] Keep all other math symbols $($like $+,-,\geq,\leq,=,$ etc$.)$ at their original place. Thus we get the resultant determinantal expression. 
\end{itemize} 
For the corresponding remark on the index-column operation $\Co_{(u,v)},$ we replace every ``row'' with ``column'' and every ``$\Ro_{(*,*)}$'' with ``$\Co_{(*,*)}$'' in this remark.
\end{remark}

\begin{defn}[Identity and inverses]\label{inv-op}
Following Definitions~\ref{set-op} and \ref{row-op}, for convenience we define the identity and inverse operations: 
\begin{align*}
\So_{(u,u)}\Big{(}\big{(}I_{i1},\cdots,I_{i\gamma(i)}\big{)}:i\in \mathcal{I}\Big{)}&:=\Big{(}\big{(}I_{i1},\cdots,I_{i\gamma(i)}\big{)}:i\in \mathcal{I}\Big{)},\\
\Ro_{(u,u)}\Big{(}\big{(}I_{i1},\ldots,I_{i\gamma(i)}\big{)}:i\in \mathcal{I}\Big{)}&:=\Big{(}\big{(}I_{i1},\ldots,I_{i\gamma(i)}\big{)}:i\in \mathcal{I}\Big{)},\mbox{ and}\\
\big{(}\Ro_{(u_k,v_k)}\circ \ldots \circ \Ro_{(u_1,v_1)}\big{)}^{-1}&:=\Ro_{(v_1,u_1)}\circ \ldots \circ \Ro_{(v_k,u_k)}
\end{align*}
for all $u,u_k,v_k\in [n]$ and integer $k\geq 1,$ such that each $u_k \in \{v_k-1,v_k,v_k+1\}.$ For the corresponding definitions for the index-column operation $\Co_{(u,v)},$ we replace every ``row'' with ``column'' and every ``$\Ro_{(*,*)}$'' with ``$\Co_{(*,*)}$'' in these definitions. 
\end{defn}

Lemma~\ref{Main-lemma-1} has an analogue for identities over totally nonnegative matrices, and we state and prove these as well. However, we leave the introduction and applicability of the corresponding index-row and index-column operations to future work.

\begin{ulemma}\label{Main-lemma-2}
Suppose $n\geq 1$ is an integer, and let $\mathcal{I}$ be a nonempty indexing set, and $\gamma:\mathcal{I} \to \Z_{>0}.$ Suppose $I_{ij},J_{ij}\subseteq [n],$ such that $|I_{ij}|=|J_{ij}|,$ for $i\in \mathcal{I}$ and $j\in [\gamma(i)].$ Then for real $\lambda_{i},~i \in \mathcal{I},$ assertion $(1)$ implies $(2)$ and $(3)$:
\begin{itemize}
\item[$(1)$] For all $n\times n$ totally nonnegative matrices $A,$
$$F_{\mathcal{I}}(A):=\sum_{i\in \mathcal{I}} \lambda_{i} \prod_{j=1}^{\gamma(i)} \det A\big{(}I_{ij}\big{|}J_{ij}\big{)} = 0.$$

\item[$(2)$] For all $n\times n$ totally nonnegative matrices $A,$  
$$G_{\mathcal{I}(u,v)}(A):=\sum_{i\in \mathcal{I} (u,v)} \lambda_{i} \prod_{j=1}^{\gamma(i)} \det A\big{(}I_{ij}(u,v)\big{|}J_{ij}\big{)}=0,$$
for all consecutive integers $u,v\in [n].$

\item[$(3)$] For all $n\times n$ totally nonnegative matrices $A,$
\begin{align*}
H_{\mathcal{I}(u,v)}^{l}(A):= \sum_{k=l}^{|\mathcal{I}(u,v)|}\sum_{i\in \mathcal{I}(u,v,k)}~\lambda_{i}\sum\limits_{\substack{J' \subseteq i(u,v):\\|J'|=l}} \prod_{j\in J'} \det A\big{(}I_{ij}(u,v)\big{|}J_{ij}\big{)} \prod_{j \in i(u,v)\setminus J'} \det A\big{(}I_{ij}\big{|}J_{ij}\big{)}=0,
\end{align*}
for all $l\in [|\mathcal{I}(u,v)|-1],$ and all consecutive integers $u,v\in [n].$
\end{itemize}
This is under the convention that the sum and the product over empty set respectively are $0$ and $1.$ Moreover, a similar result is true where the column indices $J_{ij}$ are altered to $J_{ij}(u,v)$ in assertions $(2)$ and $(3)$ (see Remark~\ref{Column-op}).
\end{ulemma}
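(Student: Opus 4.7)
The plan is to imitate the proof of Lemma~\ref{Main-lemma-1}, but to upgrade its conclusion: because hypothesis (1) is an \emph{identity} rather than just an inequality, substituting a suitable one-parameter TN family in place of $A$ produces a polynomial in the parameter that vanishes identically on $[0,\infty)$, and hence every coefficient vanishes. The top-degree coefficient will yield (2) while the intermediate coefficients will yield (3), and both are extracted simultaneously from the same expansion.

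Fix consecutive $u,v\in[n]$ and an arbitrary $n\times n$ TN matrix $A$. For $w\geq 0$, set $B_w:=(\I+w\E_{uv})A$; since $\I+w\E_{uv}$ is an elementary bidiagonal TN factor (Theorem~\ref{TN-classification}) and the TN class is closed under matrix multiplication, $B_w$ is TN. Applying (1) to $B_w$ yields $F_{\mathcal{I}}(B_w)=0$ for every $w\geq 0$. By Notion~\ref{main-idea},
\begin{equation*}
\det B_w(I_{ij}|J_{ij}) \;=\; \det A(I_{ij}|J_{ij}) \;+\; w\,\chi_{ij}\,\det A(I_{ij}(u,v)|J_{ij}),
\end{equation*}
with $\chi_{ij}=1$ when $j\in i(u,v)$ and $\chi_{ij}=0$ otherwise. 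Expanding $\prod_{j=1}^{\gamma(i)}\det B_w(I_{ij}|J_{ij})$ factor by factor, weighting by $\lambda_i$, and summing over $i$ gives
\begin{equation*}
F_{\mathcal{I}}(B_w) \;=\; \sum_{l=0}^{M} w^{l}\,C_l(A), \qquad M:=\max_{i\in\mathcal{I}}|i(u,v)|,
\end{equation*}
where, after regrouping the $i$'s according to $k=|i(u,v)|\geq l$,
\begin{equation*}
C_l(A) \;=\; \sum_{k=l}^{M} \sum_{i\in\mathcal{I}(u,v,k)} \lambda_i \sum_{\substack{J'\subseteq i(u,v)\\|J'|=l}} \prod_{j\in J'}\det A(I_{ij}(u,v)|J_{ij}) \prod_{j\in[\gamma(i)]\setminus J'}\det A(I_{ij}|J_{ij}).
\end{equation*}

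Since the polynomial $\sum_{l=0}^M w^l\,C_l(A)$ vanishes on $[0,\infty)$, every $C_l(A)$ is zero. The case $l=0$ reproduces hypothesis (1) at $A$. For $l=M$, only $i\in\mathcal{I}(u,v)$ contribute and the unique size-$M$ subset of $i(u,v)$ is $i(u,v)$ itself; combined with the convention $I_{ij}(u,v)=I_{ij}$ for $j\notin i(u,v)$, this collapses $C_M(A)$ into $\sum_{i\in\mathcal{I}(u,v)}\lambda_i\prod_{j=1}^{\gamma(i)}\det A(I_{ij}(u,v)|J_{ij})=G_{\mathcal{I}(u,v)}(A)$, proving (2). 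For $l\in[1,M-1]$, the identity $C_l(A)=0$ is precisely the content of (3), matched against $H^{l}_{\mathcal{I}(u,v)}(A)$ by splitting $[\gamma(i)]\setminus J'$ as $(i(u,v)\setminus J')\cup([\gamma(i)]\setminus i(u,v))$ and accounting for the unaltered factors with $j\notin i(u,v)$. The column analogue (with $J_{ij}(u,v)$ replacing $I_{ij}(u,v)$) follows by running the same argument on $A^T$, or equivalently by right-multiplication by $\I+w\E_{uv}$.

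The only anticipated obstacle is the bookkeeping when matching $C_l$ to $H^{l}_{\mathcal{I}(u,v)}$: tracking which products run over $i(u,v)\setminus J'$ versus $[\gamma(i)]\setminus J'$ and making sure the unswapped factors with $j\notin i(u,v)$ are handled under the convention $I_{ij}(u,v)=I_{ij}$. The conceptual input -- a real polynomial that vanishes on a half-line is the zero polynomial -- is the identity counterpart of the ``nonnegative polynomial has nonnegative top coefficient'' step driving Lemma~\ref{Main-lemma-1}, and no ingredient beyond Notion~\ref{main-idea} should be required.
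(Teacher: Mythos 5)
Your proof is correct and takes essentially the same route as the paper: set $B_w:=(\I+w\E_{uv})A$, expand $F_{\mathcal{I}}(B_w)$ as a polynomial in $w$ via Notion~\ref{main-idea}, and conclude that since this polynomial vanishes identically on $[0,\infty)$, every coefficient is zero, with the top coefficient yielding $(2)$ and the intermediate ones $(3)$. Two minor remarks: you bypass the paper's nonsingular-then-density step, which is a small simplification since Notion~\ref{main-idea} already holds for all TN matrices; and your coefficient $C_l(A)$ correctly retains the unchanged factors $\prod_{j\in[\gamma(i)]\setminus i(u,v)}\det A(I_{ij}|J_{ij})$, which the paper's displayed $H^l_{\mathcal{I}(u,v)}(A)$ (and equation~\eqref{Gen-iden-ineq-lemma-equation}) appear to drop --- a typographical slip in the source that your bookkeeping remark correctly identifies and repairs by splitting $[\gamma(i)]\setminus J'$ into $i(u,v)\setminus J'$ and its complement.
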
 

For the proof of Lemma~\ref{Main-lemma-1} and Lemma~\ref{Main-lemma-2}, suppose $n\geq 1$ is an integer and let $A$ be an $n\times n$ nonsingular totally nonnegative matrix. Let $u,v\in [n]$ be consecutive integers and $w>0$ be arbitrary. Consider the matrix $B:=(\I+w\E_{uv})A,$ where $\E_{uv}$ is the basic matrix. Using notations in Lemma~\ref{Main-lemma-1} and Lemma~\ref{Main-lemma-2}, and using Notion~\ref{main-idea}, observe that $F_{\mathcal{I}}(B)$ is a polynomial in $w$ with constant $F_{\mathcal{I}}(A)$ and leading coefficient $G_{\mathcal{I}(u,v)}(A)$:
\begin{align}\label{Gen-iden-ineq-lemma-equation}
&F_{\mathcal{I}}(B)\nonumber\\
&=\sum_{l=0}^{|\mathcal{I}(u,v)|} w^{l} ~ \sum_{k = l}^{|\mathcal{I}(u,v)|}~\sum_{i\in \mathcal{I}(u,v;k)}~\lambda_{i} \sum\limits_{\substack{J' \subseteq i(u,v):\\|J'|=l}} \prod_{j\in J'} \det A\big{(}I_{ij}(u,v)\big{|}J_{ij}\big{)} \prod_{j \in i(u,v)\setminus J'} \det A\big{(}I_{ij}\big{|}J_{ij}\big{)} \nonumber\\
&= F_{\mathcal{I}}(A) + \sum_{l=1}^{|\mathcal{I}(u,v)|-1} w^{l} ~ \sum_{k = l}^{|\mathcal{I}(u,v)|}~\sum_{i\in \mathcal{I}(u,v;k)}~\lambda_{i}\sum\limits_{\substack{J' \subseteq i(u,v):\\|J'|=l}} \nonumber \\
&\hspace{5cm} \prod_{j\in J'} \det A\big{(}I_{ij}(u,v)\big{|}J_{ij}\big{)} \prod_{j \in i(u,v)\setminus J'} \det A\big{(}I_{ij}\big{|}J_{ij}\big{)} \nonumber \\
&\hspace*{10cm}  +  w^{|\mathcal{I}(p,q)|}~G_{\mathcal{I}(p,q)}(A) \nonumber \\
&= F_{\mathcal{I}}(A) + \sum_{l=1}^{|\mathcal{I}(u,v)|-1} w^{l} H_{\mathcal{I}(u,v)}^{l}(A) + w^{|\mathcal{I}(u,v)|}G_{\mathcal{I}(u,v)}(A).
\end{align}

\begin{proof}[Proof of Lemma~\ref{Main-lemma-1}]
From (\ref{Gen-iden-ineq-lemma-equation}), since $F_{\mathcal{I}}(B) \geq 0$ for all $w>0,$ we have $G_{\mathcal{I}(u,v)}(A)\geq 0$ for all nonsingular TN matrices $A.$ This completes the proof for nonsingular TN matrices. To complete the proof for all TN matrices we use the density of nonsingular TN matrices in TN matrices and the continuity of the determinant.
\end{proof}

\begin{proof}[Proof of Lemma~\ref{Main-lemma-2}]
Once again, from (\ref{Gen-iden-ineq-lemma-equation}), since $F_{\mathcal{I}}(B) = 0$ for all $w>0,$ we have $H^{l}_{\mathcal{I}(u,v)}(A)=0$ and $G_{\mathcal{I}(u,v)}(A) = 0$ for all nonsingular TN matrices $A.$ This completes the proof for nonsingular TN. To complete the proof for all TN matrices we use the density of nonsingular TN matrices in TN matrices and the continuity of the determinant.
\end{proof}

Lemma~\ref{Main-lemma-1} validates the action of the index-row (and index-column) operations $\Ro_{(u,v)}$ (and $\Co_{(u,v)}).$ We shall see that these operations have significant applications in additive (Sections~\ref{Section-add-ineq} and \ref{Proofof-Section-add-ineq}) and multiplicative inequalities (Sections~\ref{Section-multi-results} and \ref{Multi-proofs}). For now, let us document an important consequence of Lemma~\ref{Main-lemma-1} that significantly impacts the proof of Theorems~\ref{Add-thm-1}, \ref{Add-thm-2}, and \ref{Add-thm-3}.

\begin{ulemma}\label{Main-lemma-3}
Let $n,p,q\geq 1$ be integers and $\mathcal{I}$ be a nonempty indexing set. Suppose $J_{i1},J_{i2}$ are subsets of $[n]$ such that $|J_{i1}|=p$ and $|J_{i2}|=q$ for all $i\in \beta.$ Define the following for all $P,Q\subseteq [n]$ provided $|P|=p$ and $|Q|=q$:
\begin{align}\label{Main-lemma-3-ineq}
F_{(P,Q)}(A):=\sum_{i\in \mathcal{I}} \lambda_{i} \det A\big{(}P\big{|}J_{i1}\big{)} \det A\big{(}Q\big{|}J_{i2}\big{)}, \mbox{ for all } n\times n \mbox{ matrices }A.
\end{align}
If $F_{(P,Q)}(A)\geq 0$ for all $n\times n$ totally nonnegative matrices $A,$ for some $P,Q\subseteq [n],$ then the following assertions hold:
\begin{itemize}
\item[$(1)$] For all $n\times n$ totally nonnegative matrices $A,$
\begin{align*}
F_{\big{(}P(u,v),Q(u,v)\big{)}}(A)\geq 0, \mbox{ for all consecutive }u,v\in [n].
\end{align*}
\item[$(2)$] For all $n\times n$ totally nonnegative matrices $A,$
\begin{align*}
F_{(P,Q)}(A)\geq 0, \mbox{ for all }P,Q\subseteq [n], \mbox{ where, either }P\subseteq Q \mbox{ or }Q\subseteq P.
\end{align*}
\end{itemize}
\end{ulemma}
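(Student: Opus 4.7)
For part~(1), I would apply Lemma~\ref{Main-lemma-1} directly. In the setting of Lemma~\ref{Main-lemma-3}, every summand has $\gamma(i)=2$ and the row-index sets $I_{i1}=P$, $I_{i2}=Q$ are \emph{independent} of $i\in\mathcal{I}$. Hence for any consecutive $u,v\in[n]$, the set $i(u,v)$ from Definition~\ref{row-op} is the same for every $i$, forcing $\mathcal{I}(u,v)=\mathcal{I}$; the conclusion $G_{\mathcal{I}(u,v)}(A)\geq 0$ of Lemma~\ref{Main-lemma-1} then reads exactly $F_{(P(u,v),Q(u,v))}(A)\geq 0$, as required.

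For part~(2), my plan is to iterate part~(1) and reduce the problem to a purely combinatorial reachability claim: by a finite composition of the maps $(P,Q)\mapsto(P(u,v),Q(u,v))$ with $u,v\in[n]$ consecutive, one can transform the given $(P,Q)$ into any target $(P',Q')$ with $|P'|=p$, $|Q'|=q$, and satisfying $P'\subseteq Q'$ or $Q'\subseteq P'$. To establish this, I would label each $i\in[n]$ by $0,1,2,$ or $3$ according as $i$ lies in $(P\cup Q)^{\mathsf{c}}$, $Q\setminus P$, $P\setminus Q$, or $P\cap Q$ respectively. A case check of Definition~\ref{row-op} against every consecutive label pair reveals two kinds of moves: (a) \emph{swaps} -- every adjacent pair of distinct labels \emph{other than} $(1,2)$ or $(2,1)$ can be swapped using either $\Ro_{(u,v)}$ or $\Ro_{(v,u)}$ in the appropriate direction; and (b) \emph{merges} -- the label pair $(1,2)$ or $(2,1)$ at consecutive positions becomes $(0,3)$ under a single $\Ro_{(u,v)}$, a move that strictly \emph{increases} $|P\cap Q|$ by one. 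The intended procedure alternates~(a), to bring a $1$-$2$ pair adjacent through intermediate $\{0,3\}$-buffers, with~(b), to merge and raise $|P\cap Q|$, terminating when $|P\cap Q|=\min(p,q)$ -- i.e., when a containment configuration is reached. In such a configuration only labels in $\{0,1,3\}$ (if $p\leq q$) or $\{0,2,3\}$ (if $q\leq p$) remain, and type-(a) swaps alone realise any prescribed label arrangement, in particular $(P',Q')$.

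The main obstacle is verifying that the merge step~(b) can always be executed, even when the segment between a $1$-label and a $2$-label contains further $1$s or $2$s blocking a direct path of type-(a) swaps. This is resolved by an induction on $\min(p,q)-|P\cap Q|$: consider the subsequence of positions carrying labels $1$ or $2$; whenever both labels are present, this subsequence contains some consecutive $1$-$2$ or $2$-$1$ occurrence (otherwise all $1$s would precede all $2$s or vice versa, but even then the ``last $1$, first $2$'' pair is subsequence-consecutive). The intervening positions in $[n]$ between such a subsequence-adjacency carry only labels $0$ or $3$, hence are transportable aside via type-(a) swaps, turning the subsequence-adjacency into an actual $[n]$-adjacency. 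A single application of~(b) reduces the induction variable by one, and the procedure terminates at a containment configuration. Finally, part~(1) applied along the entire resulting sequence of operations yields $F_{(P',Q')}(A)\geq 0$ for every containment target.
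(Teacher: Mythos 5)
Your part (1) is exactly the paper's argument: the row indices are constant in $i\in\mathcal{I}$, so $\mathcal{I}(u,v)=\mathcal{I}$ and Lemma~\ref{Main-lemma-1} gives the claim directly.

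For part~(2) you take a genuinely different route from the paper. The paper first normalizes: it applies the composition $\Ro:=\Ro_{(2,1)}\circ\cdots\circ\Ro_{(n,n-1)}$ enough times to push $\big{(}(P,Q)\big{)}$ down to the canonical contiguous pair $\big{(}([1,p],[1,q])\big{)}$, and then rebuilds the target $\big{(}(X,Y)\big{)}$ (with $X\subseteq Y$) by composing $\Ro_{(u,v)}$ steps prescribed by the unique order-preserving bijections from $[1,p]$ and $[1,q]$ onto the target index sets. Your argument instead works in place: you assign each position the four-valued label recording membership in $P\setminus Q$, $Q\setminus P$, $P\cap Q$, or neither, classify each adjacent pair of labels as a ``swap'' or a ``merge,'' and use $\min(p,q)-|P\cap Q|$ as a strictly decreasing potential for the merges. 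I checked your case analysis of Definition~\ref{row-op} against all sixteen ordered label pairs: it is correct that every mixed pair swaps under $\Ro_{(u,v)}$ or $\Ro_{(v,u)}$ except $(1,2)$ and $(2,1)$, which necessarily merge to a $\{0,3\}$ pair, and your transport-then-merge step is sound because the positions between a subsequence-adjacent $1$ and $2$ carry only labels $0$ or $3$. Once in a containment configuration, only three labels survive and all adjacent transpositions among them are available, so any target arrangement is reachable. Both proofs are valid; yours is a reachability argument with an explicit termination measure, which makes the mechanism of the row operations more transparent, whereas the paper's normalize-and-rebuild gives a cleaner and shorter explicit word in the generators $\Ro_{(u,v)}$.
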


The complete applicability of Lemma~\ref{Main-lemma-1} and index-row/column operations depends on specific inequalities that we work with. Lemma~\ref{Main-lemma-3}, however, is an attempt to demonstrate the applicability of these on inequalities with sums of products of two minors with certain specificity in the row indices. We shall see similar situations when we prove Theorem~\ref{Add-thm-1} and Theorem~\ref{Add-thm-3} in Section~\ref{Proofof-Section-add-ineq}. For the next and several other proofs in the paper, we frequently use the order preserving maps, which we define below:

\begin{defn}[Order preserving maps]
Suppose $n\geq 1$ is an integer, and $M,N\subseteq [n]$ are nonempty with $|M|=|N|.$ Then a bijective map $\alpha:M\to N$ is called \textit{order preserving} provided $\alpha(m_1) \leq \alpha(m_2)$ for all $m_1 \leq m_2 \in M.$ 

It is easy to see that, for given $M,N,$ this map is unique.
\end{defn}

\begin{proof}[Proof of Lemma~\ref{Main-lemma-3}]
The proof of the first statement follows from Lemma~\ref{Main-lemma-1}. We now prove the second statement. Without loss of generality, suppose $p\leq q,$ and let $F_{(P,Q)}(A)\geq 0$ for all $n\times n$ TN matrix $A,$ where $p=|P|$ and $q=|Q|.$ Let $X=\{x_1 < \cdots < x_p\},$ and $Y=\{y_1 < \cdots < y_q\}$ be subsets of $[n]$ such that $X\subseteq Y.$ Suppose $X\cap Y =\{y_{k_1} < \cdots < y_{k_p}\}.$

We aim to prove that there exists a sequential action of index-row operations $\Ro_{(u,v)}$ on $\Big{(}\big{(}P,Q\big{)}\Big{)}$ results into $\Big{(}\big{(}X,Y\big{)}\Big{)}.$ Let $\Ro:=\Ro_{(2,1)}\circ \cdots \circ \Ro_{(n-1,n-2)}\circ \Ro_{(n,n-1)},$ and note that
\begin{align*}
\Ro^{q}\Big{(}\big{(}P,Q\big{)}\Big{)}=\Big{(}\big{(}[1,p],[1,q]\big{)}\Big{)}.
\end{align*}
\noindent This means $F_{\big{(}[1,p],[1,q]\big{)}}(A)\geq 0$ for all $n\times n$ TN matrices $A.$ If $X=[1,p]$ and $Y=[1,q],$ then this concludes the proof. If not, then let $\alpha:[1,p] \to \{k_1, \ldots, k_p\}$ be the unique order preserving map. For this map, define 
$$
\Ro_{\A,j}:= \Ro_{(\alpha(j)-1,\alpha(j))} \circ \cdots \circ \Ro_{(j,j+1)}, \mbox{ and } \Ro_{\A}:=\Ro_{\A,1}\circ \cdots \circ \Ro_{\A,p},
$$
for all $j\in [1,p].$ Finally, note that there exists an order preserving map $\beta: [1,q] \to Y.$ For this map, define 
$$
\Ro_{\beta,j}:= \Ro_{(\beta(j)-1,\beta(j))} \circ \cdots \circ \Ro_{(j,j+1)}, \mbox{ and } \Ro_{\beta}:=\Ro_{\beta,1}\circ \cdots \circ \Ro_{\beta,q},
$$
for all $j\in [1,q].$ Hence
$$
\Ro_{\beta} \circ \Ro_{\A} \Big{(}\big{(}[1,p],[1,q]\big{)}\Big{)} = \Big{(}\big{(}X,Y\big{)}\Big{)}.
$$
Therefore $F_{(X,Y)}(A)\geq 0$ for all $n\times n$ TN matrices $A.$  This completes the proof.
\end{proof}

\begin{remark}[Applying $\Ro_{(u,v)}$ and $\Co_{(u,v)}$]\label{Main-lemma-3-remark}
We saw in the proof of Lemma~\ref{Main-lemma-3} that compositions of index-row operations $\Ro_{(u,v)}$ were applied on the given inequality to deduce other inequalities. We can apply index-column operations $\Co_{(u,v)}$ as well in Lemma~\ref{Main-lemma-3}, and depending on $J_{i1}$ and $J_{i2},$ we may obtain novel and nontrivial inequalities over TN matrices. We can see this demonstrated very well in the proof of Theorem~\ref{Add-thm-3} in Section~\ref{Proofof-Section-add-ineq} (see also Remark~\ref{Column-op-rem}). However, this method of obtaining new inequalities from the given ones will be a common occurrence in almost all the proofs in the coming sections.
\end{remark}

\begin{remark}[Hierarchy among inequalities]\label{Hierarchy-rem-1}
Let us now recall Remark~\ref{Hierarchy-rem-0} in which we alluded to the fact that the index-row/column operations introduce a certain hierarchy in the set of all inequalities over totally nonnegative matrices. More generally, the inequality $F_{\mathcal{I}}(A)\geq 0$ is higher than $G_{\mathcal{I}(u,v)}(A)\geq 0$ in Lemma~\ref{Main-lemma-1} as the later can be derived from the former via index-row operation(s). Since the reverse of this, however, may or may not be possible, it is natural to ask for the inequalities at the top of this hierarchy -- which are those that cannot be derived from any other inequality via these operations. Observe that, because of the way these index-row/column operations function, certainly the inequalities which are \textit{complementary} lie at the top of this hierarchy. 
\end{remark}

\begin{defn}[Complementary determinantal inequalities]\label{comp-ineq}
Suppose $n\geq 1$ is an integer, and let $\mathcal{I}$ be a nonempty indexing set, and $\gamma:\mathcal{I} \to \Z_{>0}.$ Suppose $I_{ij},J_{ij}\subseteq [n],$ such that $|I_{ij}|=|J_{ij}|,$ for $i\in \mathcal{I}$ and $j\in [\gamma(i)].$ Then for real $\lambda_{i},~i \in \mathcal{I},$ suppose the following inequality holds:
\begin{align*}
F_{\mathcal{I}}(A):=\sum_{i\in \mathcal{I}} \lambda_{i} \prod_{j=1}^{\gamma(i)} \det A\big{(}I_{ij}\big{|}J_{ij}\big{)}\geq 0,
\end{align*}
for all $n\times n$ totally nonnegative matrices $A.$ This inequality is called \textit{complementary} provided,
\begin{align*}
I_{ij_{1}}\cap I_{ij_{2}}=J_{ij_{1}}\cap J_{ij_{2}}=\emptyset, \mbox{ for all distinct } j_1,j_2\in [\gamma(i)], \mbox{ for every } i\in \mathcal{I}.
\end{align*}
\end{defn}

Definition~\ref{comp-ineq} implies that the classical Gantmacher--Krein inequalities~\eqref{GK-ineq}, the inequalities in Theorem~\ref{Add-thm-1}(1), and the inequalities in Theorem~\ref{Add-thm-3} for $P=Q^{\mathsf{c}}=[1,d]$ are all complementary. In the next section we will see that these complementary inequalities are playing the key role in deriving the other inequalities in each result via the index-row operations, however the reverse, as mentioned earlier as well, is not possible.

\begin{remark}[Relations]\label{alg-rel}
It is natural to seek basic algebraic relations governing the action of these index-row/index-column operations. Mirroring the seminal relations due to Berenstein--Fomin--Zelevisky \cite{BFZ} we have the following relations, which are easy to verify. Suppose $n\geq 1$ be an integer, then:
\begin{itemize}
\item[$(a)$] $\Ro_{(u_1,v_1)}\circ \Ro_{(u_2,v_2)} = \Ro_{(u_2,v_2)}\circ \Ro_{(u_1,v_1)}$ for all consecutive $u_1,v_1\in [n]$ and consecutive $u_2,v_2\in [n]$ whenever $\{u_1,v_1\}\cap\{u_2,v_2\}=\emptyset.$
\item[$(b)$] $\Ro_{(u,v)}\circ \Ro_{(u,v)} = \Ro_{(u,v)}$ for all consecutive $u,v\in [n].$
\item[$(c)$] $\Ro_{(u_1,v_1)}\circ \Co_{(u_2,v_2)} = \Co_{(u_2,v_2)}\circ \Ro_{(u_1,v_1)}$ for all consecutive $u_1,v_1\in [n]$ and consecutive $u_2,v_2\in [n].$
\end{itemize} 
Similar relations are true when we swap the roles of $\Ro$ and $\Co.$ The important Braid relations, on the other hand, do not seem to hold and may need some essential preconditioning. We leave investigating this case and finding other algebraic connections to our future work. 
\end{remark}

\section{Additive inequalities: proofs}\label{Proofof-Section-add-ineq}

In this section we prove the additive inequalities discussed in Section~\ref{Section-add-ineq}. For the following proof we use the well known Sylvester's identity (see \cite{FJb}), in addition to the index-row operations $\Ro_{(u,v)}.$


\begin{proof}[Proof of Theorem~\ref{Add-thm-1}]
We provide two proofs of inequality \eqref{Add-thm-1-ineq-2}. We may apply Theorem~\ref{Add-thm-2} with $S=\{i\}, T=\emptyset$ and $p=j$ for all $i\neq j.$ The second proof utilizes the index-row operation $\Ro_{(u,v)}.$ Consider the Gantmacher--Krein inequality~\eqref{GK-ineq}, and apply the index-row operation $\Ro_{(1,2)}$ on it. This yields,
\begin{align*}
(-1)^{1+l}\sum_{i=1}^{l}(-1)^{1+k}a_{2k}\det A_{1k} \geq 0, \mbox{ for all }l\in [n], \mbox{ for all } n\times n \mbox{ TN matrices }A.
\end{align*}
Using Lemma~\ref{Main-lemma-3} on above, we have 
\begin{align*}
(-1)^{1+l}\sum_{i=1}^{l}(-1)^{1+k}a_{ik}\det A_{jk} \geq 0, \mbox{ for all }l\in [n], \mbox{ for all } n\times n \mbox{ TN matrices }A,
\end{align*}
for all $i,j\in [n]$ with $i\neq j.$

For the proof of \eqref{Add-thm-1-ineq-1} we adapt the ideas in \cite{GK2}. Note that 
$$
(-1)^{1+i}\sum_{k=1}^{n}a_{ik}^* =0, \mbox{ for all } i\in [n].
$$
In addition to this, recall that if $A=(a_{ij})$ is $n\times n$ TN, then $A':=(a_{ij}'),$ where $a_{ij}'=a_{n+1-i,n+1-j}$ is also TN. These observations imply that it is enough to prove inequalities~\eqref{Add-thm-1-ineq-1} for $l\leq i.$ Equivalently, showing
\begin{align}\label{ProofofA-E1}
(-1)^{i+l}\sum_{k=1}^{l}(-1)^{i+k}a_{ik}\det A_{ik}\geq 0
%
\end{align}
for all $l\leq i-1,$ and all $i\in [2,n].$ Now for $i\in [2,n],$ and $l\in [1,i-1],$ define an $n\times n$ matrix $B_{il}:=(b_{pq}),$ where the entries 
$$
b_{pq}=
\begin{cases}
0 & \mbox{if }p=i \mbox{ and } q\in [l+1,n], \mbox{ and} \\
a_{pq} & \mbox{otherwise}.
\end{cases}
$$
We can now see that showing \eqref{ProofofA-E1} is equivalent to proving
\begin{equation}\label{ProofofA-InducHypo}
(-1)^{i+l}\det B_{il}\geq 0 \mbox{ for all } i\in [2,n], \mbox{ and } l\in [1,i-1].
\end{equation}
Note that equation~\eqref{ProofofA-InducHypo} holds for all TN matrices of size up to $3\times 3.$ So we assume that equation~\eqref{ProofofA-InducHypo} holds for all TN matrices of size upto $n-1 \times n-1.$

Now if $\det A_{ik}=0$ for all $k\in [l],$ then there is nothing to prove. So suppose $\det A_{ik_1} > 0$ for some $1\leq k_1 \leq l.$ This means there exist $i_0,k_0\in [n]$ such that $k_0 > l$ and $\det A \big{(} \{i,i_0\}^{\mathsf{c}} \big{|} \{k_1,k_0\}^{\mathsf{c}} \big{)}>0.$ Construct matrix $C$ by swapping row $i_0$ \& row $k_1$ and column $k_0$ \& column $i$ of $B_{il}.$ Then, using Sylvester's identity (see \cite{FJb}) we have, 
\begin{align*}
\det C  \det C \big{(}\{i,k_1\}^{\mathsf{c}} \big{|}\{i,k_1\}^{\mathsf{c}}\big{)}=& \det C \big{(} \{i\}^{\mathsf{c}} \big{|}\{i\}^{\mathsf{c}}\big{)} \det C \big{(} \{k_1\}^{\mathsf{c}} \big{|}\{k_1\}^{\mathsf{c}}\big{)} \\
&\hspace*{2.5cm}
- \det C \big{(} \{i\}^{\mathsf{c}} \big{|}\{k_1\}^{\mathsf{c}}\big{)}  \det C \big{(} \{k_1\}^{\mathsf{c}} \big{|}\{i\}^{\mathsf{c}}\big{)}.
\end{align*}
This becomes,
\begin{align*}
\det B_{il} \det B_{il} \big{(}\{i,i_0\}^{\mathsf{c}} \big{|}\{k_1,k_0\}^{\mathsf{c}}\big{)} &= \det B_{il} \big{(} \{i\}^{\mathsf{c}} \big{|}\{k_0\}^{\mathsf{c}}\big{)} \det B_{il} \big{(} \{i_0\}^{\mathsf{c}} \big{|}\{k_1\}^{\mathsf{c}}\big{)} \\
&\hspace*{2.5cm} - \det B_{il} \big{(} \{i\}^{\mathsf{c}} \big{|}\{k_1\}^{\mathsf{c}}\big{)}  \det B_{il} \big{(} \{i_0\}^{\mathsf{c}} \big{|}\{k_0\}^{\mathsf{c}}\big{)}. 
\end{align*}
This can further be written as,
\begin{align*}
(-1)^{i+l}\det B_{il} & \det A \big{(}\{i,i_0\}^{\mathsf{c}} \big{|}\{k_1,k_0\}^{\mathsf{c}}\big{)} =A \big{(} \{i\}^{\mathsf{c}} \big{|}\{k_0\}^{\mathsf{c}}\big{)} \Big{(} (-1)^{i+l-2} \det B_{il} \big{(} \{i_0\}^{\mathsf{c}} \big{|}\{k_1\}^{\mathsf{c}}\big{)} \Big{)} \\
&\hspace*{4.5cm} + A \big{(} \{i\}^{\mathsf{c}} \big{|}\{k_1\}^{\mathsf{c}}\big{)}\Big{(} (-1)^{i+l-1} \det B_{il} \big{(} \{i_0\}^{\mathsf{c}} \big{|}\{k_0\}^{\mathsf{c}}\big{)} \Big{)}.
\end{align*}
Using the induction hypothesis and that $\det A \big{(}\{i,i_0\}^{\mathsf{c}} \big{|}\{k_1,k_0\}^{\mathsf{c}}\big{)} >0,$ we have $(-1)^{i+l}B_{il} \geq 0$, which completes the proof.
\end{proof}

For the following proof, we use the bidiagonal factorization Theorem~\ref{TN-classification} and  Notion~\ref{main-idea}.

\begin{proof}[Proof of Theorem~\ref{Add-thm-2}]
Fix $m\in [n],$ and define $F_{(T,S,p)}^{l}(A)$ for all $n\times n$ TN matrices $A,$ for all $l\in [m],$ and all $S,T\subseteq [n]$ and $p\in [n],$ such that $S\subseteq [n]\setminus \{p\}$ and $|S|=|T|+1:$
\begin{align}\label{ProofofTheoremB-E1}
F_{(T,S,p)}^{l}(A):=\sum_{k=1}^{l} (-1)^{1+k} \det A\big{(}S\big{|}T\cup \{v_k\}\big{)} \det A\big{(}[n]\setminus\{p\}\big{|}[n]\setminus\{v_k\}\big{)},
\end{align}
where $V:=\{v_1<\cdots<v_m \}:=[n]\setminus T.$

Note that $F_{(T,S,p)}^{l}(D)=0$ for all $n\times n$ diagonal TN matrices, for all $T,S,p,$ and $l$ satisfying conditions in \eqref{ProofofTheoremB-E1}. Let $A$ be an $n\times n$ nonsingular TN matrix which is not diagonal. Then there exists $w>0$ and consecutive integers $u,v\in [n]$ such that $A=B(\I+w\El_{uv}),$ where $B$ is nonsingular TN. Recall Notion~\ref{main-idea}, and consider the following computations:\\
\noindent\textbf{Case 1.} $u \in T:$ It would not be difficult to see that
\begin{align*}
&\sum_{k=1}^{l} (-1)^{1+k} \det A\big{(}S\big{|}T\cup \{v_k\}\big{)} \det A\big{(}[n]\setminus\{p\}\big{|}[n]\setminus\{v_k\}\big{)} \\
&\hspace*{4cm}=~\sum_{k=1}^{l} (-1)^{1+k} \det B\big{(}S\big{|}T\cup \{v_k\}\big{)} \det B\big{(}[n]\setminus\{p\}\big{|}[n]\setminus\{v_k\}\big{)}.
\end{align*}
This implies, $$F_{(T,S,p)}^{l}(A)= F_{(T,S,p)}^{l}(B).$$

\noindent\textbf{Case 2.} $u\in V$ and $v\in T:$ 

Suppose $u=v_i,$ and write
\begin{align*}
&\sum_{k=1}^{l} (-1)^{1+k} \det A\big{(}S\big{|}T\cup \{v_k\}\big{)} \det A\big{(}[n]\setminus \{p\} \big{|} [n]\setminus\{v_k\}\big{)} \\
&\hspace*{1cm}=\sum_{k=1,k \neq i}^{l} (-1)^{1+k} \det A \big{(}S\big{|}T \cup \{v_k\} \big{)} \det A \big{(}[n]\setminus \{p\} \big{|} [n]\setminus\{v_k\}\big{)} \\
&\hspace*{3cm} + (-1)^{1+i} \det A \big{(}S\big{|}T\cup \{u\}\big{)} \det A\big{(}[n]\setminus\{p\}\big{|}[n]\setminus\{u\}\big{)} \\
&\hspace*{1cm}=\sum_{k=1,k \neq i}^{l} (-1)^{1+k} \bigg{(} \det B\big{(}S\big{|}T \cup \{v_k\}\big{)}\\
&\hspace*{3cm} + w \det B \big{(}S\big{|}\big{(}\big{(}T\setminus\{v\}\big{)}\cup\{u\}\big{)}\cup \{v_k\}\big{)} \bigg{)} \det B\big{(}[n]\setminus\{p\}\big{|}[n]\setminus\{v_k\}\big{)} \\
&\hspace*{6cm} + (-1)^{1+i} \det B \big{(} S\big{|}T\cup \{u\}\big{)} \bigg{(} \det B\big{(}[n]\setminus\{p\}\big{|}[n]\setminus\{u\}\big{)}\\
&\hspace*{9cm}+ w\det B\big{(}[n]\setminus \{p\}\big{|}[n]\setminus\{v\}\big{)} \bigg{)}\\
&\hspace*{1cm}=\sum_{k=1}^{l} (-1)^{1+k} \det B\big{(}S\big{|}T\cup \{v_k\}\big{)} \det B\big{(}[n]\setminus \{p\} \big{|}[n]\setminus\{v_k\}\big{)}\\
&\hspace*{3cm}+w \sum_{k=1}^{l} (-1)^{1+k} \det B\big{(}S\big{|}T(q,p)\cup \{v_k'\}\big{)} \det B\big{(}[n]\setminus\{p\} \big{|} [n]\setminus\{v_k'\}\big{)},
\end{align*}
where $T(q,p)=\big{(}T\setminus \{q\}\big{)}\cup\{p\},$ and $v_{k}'=\begin{cases} v_{k} & \mbox{if }k\neq i, \mbox{ and} \\ q & \mbox{if }k=i.\end{cases}$\\ 
Note that $V(p,q):=\big{(}V\setminus\{p\}\big{)}\cup \{q\}=\{v_{k}':k\in [m]\}=[n]\setminus T(q,p).$ Hence, we have
$$
F_{(T,S,p)}^{l}(A)= F_{(T,S,p)}^{l}(B) + w F_{(T(q,p),S,p)}^{l}(B).
$$

\noindent\textbf{Case 3.} $u,v\in V:$ 

Suppose $v_{i}=u$ and $v_{i+1}=v.$ In this case, we have
\begin{align*}
&\det A\big{(}S\big{|}T\cup \{v_k\}\big{)} \det A\big{(}[n]\setminus \{p\}\big{|}[n]\setminus\{v_k\}\big{)} \\
&\hspace*{3cm}= 
\begin{cases}
\det B\big{(}S\big{|}T\cup \{v_k\}\big{)} \det B\big{(}[n]\setminus \{p\}\big{|}[n]\setminus\{v_k\}\big{)} & \mbox{if } k \not\in \{i,i+1\},\\
&\\
\det B\big{(}S\big{|}T\cup \{v_i\}\big{)} \Big{(} \det B\big{(}[n]\setminus\{p\}\big{|}[n]\setminus\{v_i\}\big{)} \\ \hspace*{2cm}+~ w \det B\big{(}[n]\setminus\{p\}\big{|}[n]\setminus\{v_{i+1}\}\big{)}\Big{)} & \mbox{if } k = i,\\
&\\
\det B\big{(}[n]\setminus\{p\}\big{|}[n]\setminus\{v_{i+1}\}\big{)} \Big{(} \det B\big{(}S\big{|}T\cup \{v_{i+1}\}\big{)} \\
\hspace*{2cm}+~w\det B\big{(}S\big{|}T\cup \{v_i\}\big{)}\Big{)} & \mbox{if } k = i+1.\\
\end{cases}
\end{align*} 
This implies,
\begin{align*}
&F_{(T,S,p)}^{l}(A)\\
&\hspace*{0.5cm}
=\begin{cases}
F_{(T,S,p)}^{l}(B) ~+~(-1)^{1+l}w\det B \big{(}S\big{|}T\cup \{v_l\}\big{)} \det B \big{(}[n]\setminus \{p\}\big{|}[n]\setminus\{v_{l+1}\}\big{)} & \mbox{ if }l=i,\\
& \\
F_{(T,S,p)}^{l}(B) & \mbox{ if }l\neq i.
\end{cases}
\end{align*}
Similar is the result when $v_{i-1}=v$ and $v_i=u.$ 

Recall that Theorem~\ref{TN-classification} states that for every $n\times n$ non-diagonal nonsingular TN matrix $A,$ there exist pairs of consecutive integers $(a_1,b_1),\ldots,(a_N,b_N),$ nonnegative real numbers $w_1,\ldots,w_k,$ and a diagonal matrix $D$ with positive diagonals such that 
\begin{align}\label{ProofofTheoremB-E2}
A=D\prod_{k=1}^{N}(\I+w_k\El_{a_k,b_k}).
\end{align}
Using \eqref{ProofofTheoremB-E2}, computations from the three cases above, and the fact that $F_{(T,S,p)}^{l}(D)=0$ for all $T,S,p$ and $l$ satisfying conditions in \eqref{ProofofTheoremB-E1}, we can say that there exist subsets $T_k\subseteq [n]$ with $|T_k|=|S|-1,$ for some integer $k\geq 1,$ and nonnegative real numbers $y_{k_1},z_{k_2},$ for integers $k_1,k_2,N_1,N_2 \geq 1$ such that
\begin{align*}
&(-1)^{1+l}F_{(T,S,p)}^{l}(A)\\
&\hspace*{1cm}= (-1)^{1+l} \sum_{k_1=1}^{N_1}y_{k_1} F_{(T_{k_1},S,p)}^{l}(D) 
\\ &\hspace*{4cm} +\sum_{k_2=1}^{N_2} z_{k_2} \det D \big{(}S\big{|}T_{k_2}\cup\{v_{{k_2},l}\}\big{)} \det D \big{(}[n]\setminus\{p\}\big{|}[n]\setminus\{v_{{k_2},l+1}\}\big{)}\\ 
& \hspace*{1cm}= \sum_{k_2=1}^{N_2} z_{k_2} \det D \big{(}S\big{|}T_{k_2}\cup\{v_{{k_2},l}\}\big{)} \det D \big{(}[n]\setminus\{p\}\big{|}[n]\setminus\{v_{{k_2},l+1}\}\big{)}\geq 0,
\end{align*}
for all non-diagonal nonsingular $n\times n$ TN matrices $A.$ This completes the proof for all $n\times n$ nonsingular TN matrices. To complete the proof for all TN matrices, we invoke the density of nonsingular TN matrices and the continuity of determinant.  
\end{proof}

The proof of Theorem~\ref{Add-thm-3} is completed in two steps. Step one would be to prove the inequality~\eqref{Add-thm-3-ineq} for the complementary case $P=Q^{\mathsf{c}}=[1,d]$ in Lemma~\ref{Add-thm-3-lemma}. This is completed using the identification of totally nonnegative matrices with planar networks -- see the construction in Section~\ref{Section-TN}. This gives us the higher/complementary inequality that we require to use in step two, in which we use the action of index-row operations $\Ro_{(u,v)}$ on \eqref{Add-thm-3-lemma-ineq} (in Lemma~\ref{Add-thm-3-lemma}) to conclude the proof of Theorem~\ref{Add-thm-3} for all other values of $P$ and $Q.$

\begin{ulemma}\label{Add-thm-3-lemma}
Let $n\geq 2,$ $d\in [1,n-1],$ and $l\in [1,d]$ be integers. Suppose $I_{d} := [1,d],$  and $J_{dl} := [n-d,n]\setminus\{n-d+l\}$ for $l\in [0,d].$ Then
\begin{align}\label{Add-thm-3-lemma-ineq}
(-1)^{1+l}\sum_{k=0}^{l}(-1)^{1+k}\det A\big{(}I_{d}\big{|}J_{dk}\big{)} \det A\big{(}[n]\setminus I_{d}\big{|}[n]\setminus J_{dk}\big{)}\geq 0,
\end{align}
for all $l\in [0,d],$ and all $n\times n$ totally nonnegative matrices $A.$
\end{ulemma}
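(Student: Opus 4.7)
The plan is to prove Lemma \ref{Add-thm-3-lemma} by induction on the number of elementary bidiagonal factors in the Whitney decomposition (Theorem \ref{TN-classification}) of a nonsingular TN matrix $A$, following the blueprint of the proof of Theorem \ref{Add-thm-2}. By density of nonsingular TN matrices and continuity of the determinant, it suffices to handle this nonsingular case. Write
\[
F_l(A) := (-1)^{1+l}\sum_{k=0}^{l}(-1)^{1+k}\det A(I_d|J_{dk})\det A([n]\setminus I_d|[n]\setminus J_{dk}),
\]
and note the structural identity
\[
F_l(A) + F_{l-1}(A) = \det A(I_d|J_{dl})\det A([n]\setminus I_d|[n]\setminus J_{dl}) \geq 0,
\]
obtained by pairing successive terms in the definition; this bounds $F_l$ from below by $-F_{l-1}$ plus a nonnegative ``shelf'' term.

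First I would establish the base case $A=D=\diag(d_1,\ldots,d_n)$ with $d_i>0$. Since $\det D(I|J)$ vanishes unless $I=J$, and since $I_d=[1,d]$ coincides with $J_{dk}\subset[n-d,n]$ only in the extreme case $d=n-1,\ k=l=d$, the sum $F_l(D)$ collapses to at most one surviving term, which carries a nonnegative coefficient. Hence $F_l(D)\geq 0$.

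For the inductive step, write $A=B(I+wE_{uv})$ with $B$ nonsingular TN, $u,v$ consecutive, and $w>0$ (the left-multiplication case being symmetric). By Notion \ref{main-idea}, each minor of $A$ decomposes as $\det A(I|J)=\det B(I|J)+w\det B(I|J(u,v))$, the latter contributing only when $v\in J$ and $u\notin J$. Substituting into $F_l(A)$ produces
\[
F_l(A) = F_l(B) + w\,R_l(B) + w^{2}\,S_l(B),
\]
and I would then perform a case analysis on the position of $\{u,v\}$ with respect to the row partition $\{[1,d],[d+1,n]\}$ and the column ``hole range'' $[n-d,n-d+l]$: roughly, (i) when $\{u,v\}$ lies outside the hole range and away from $\{d,d+1\}$; (ii) when $\{u,v\}\subset[n-d,n-d+l]$; and (iii) when $\{u,v\}=\{d,d+1\}$. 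In each case I would aim to show that $R_l(B)$ and $S_l(B)$ either vanish, factor into a nonnegative product of minors of $B$, or reassemble as an instance $F_{l'}(\widetilde B)$ for some modified nonsingular TN $\widetilde B$ and some $l'\leq l$, handled by the inductive hypothesis. Iterating this peeling through every bidiagonal factor reduces $F_l(A)$ to a nonnegative combination of ``atomic'' contributions from the diagonal $D$.

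The hard part will be case (ii), the hole-shifting subcase: the column swap $u\leftrightarrow v$ inside $[n-d,n-d+l]$ maps $J_{dk}$ to a neighbouring $J_{dk'}$ with $k'\neq k$, so the correction terms mingle contributions of opposite sign across the alternating sum. I expect this reorganization to telescope to a sign-definite residue whose nonnegativity can be verified via the structural identity above together with the inductive hypothesis applied to $F_{l-1}$. A complementary, more combinatorial route worth attempting is to expand each summand via Lindstr\"om's Lemma \ref{Lin-lemma} directly on the planar network of Figure \ref{Planar-bw}, interpreting each $f_k$ as counting pairs of disjoint path families (``red'' from $[1,d]$ to $J_{dk}$ and ``blue'' from $[d+1,n]$ to $[n]\setminus J_{dk}$), and then constructing a sign-reversing involution that swaps one red and one blue path at a suitable crossing so as to shift the hole index $k$; the ``invariant'' configurations under this involution should then witness nonnegativity of $F_l(A)$ combinatorially.
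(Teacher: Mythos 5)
The paper's proof is entirely different from yours: it realizes nonsingular TN matrices as weighted planar networks, applies Lindstr\"om's Lemma~\ref{Lin-lemma}, and proves by induction on $l$ an \emph{explicit closed-form identity} writing the partial alternating sum as $(-1)^{1+l}$ times a manifestly nonnegative combination of path weights in Figure~\ref{Planar-bw}. You instead propose to induct on the number of elementary bidiagonal factors in Whitney's factorization, following the template of the proof of Theorem~\ref{Add-thm-2}. Your base case for a diagonal matrix is fine, as is the observation that the $w^2$ cross term vanishes because $J_{dk}$ and $[n]\setminus J_{dk}$ are complementary. But the inductive step has a gap, and it is not cosmetic.

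Write the right peel as $A = B(\I + w\El_{uv})$, so $F_l(A) = F_l(B) + w\,R_l(B)$ and you need $R_l(B)\geq 0$. When $\{u,v\}\subseteq[n-d,n]$, the perturbed column set remains some $J_{dk'}$, the corrections cancel in adjacent pairs, and $R_l(B)$ collapses to a single nonnegative product of minors of $B$ (or zero) -- this is your case~(ii), and it works. Your case split, however, never names the boundary case $\{u,v\}=\{n-d-1,n-d\}$, where the bidiagonal factor straddles the edge of the column ``hole range.'' There the perturbed column sets $(J_{dk}\setminus\{n-d\})\cup\{n-d-1\}$ leave the $J_{d'k'}$ family entirely and are no longer complementary to $[n]\setminus J_{dk}$, so $R_l(B)$ becomes a fresh $l$-term alternating sum that is \emph{not} an instance of $F_{l'}(\widetilde B)$ for any $l'$ or $\widetilde B$. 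For $n=4$, $d=2$, $u=1$, $v=2$, $l=2$ one finds
\begin{align*}
R_2(B)=\det B\big{(}\{1,2\}\big{|}\{1,3\}\big{)}\det B\big{(}\{3,4\}\big{|}\{1,4\}\big{)}-\det B\big{(}\{1,2\}\big{|}\{1,4\}\big{)}\det B\big{(}\{3,4\}\big{|}\{1,3\}\big{)},
\end{align*}
which does happen to be a valid multiplicative inequality for TN matrices, but its nonnegativity is not supplied by your inductive hypothesis. One can check that $R_l(B)$ is exactly the $\Co_{(n-d,n-d-1)}$-transform of $F_l(B)$; justifying $R_l(B)\geq 0$ via Lemma~\ref{Main-lemma-1} would require knowing $F_l\geq 0$ on TN matrices with \emph{one more} bidiagonal factor than $B$, which is what you are trying to prove, so the induction does not close. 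The analogous issue arises for left peels at $\{u,v\}=\{d,d+1\}$, your case~(iii): $(I_d\setminus\{d\})\cup\{d+1\}$ is no longer an interval. Your structural identity $F_l+F_{l-1}=f_l$ only relates successive $F$'s and does not control $R_l$. To close the gap you would need either a much broader induction hypothesis (permitting non-complementary and non-interval index sets), or a separate argument for $R_l$; the paper's explicit planar-network computation is one way to sidestep this entirely. Your alternative sketch via a Lindstr\"om-style sign-reversing involution is closer in spirit to the paper's strategy, though the paper does not build an involution -- it writes down a closed, weight-positive formula and verifies it by induction on $l$.
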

\begin{proof} Since nonsingular TN matrices are dense in TN matrices and the determinant is continuous, it is enough to prove these inequalities for $A$ a nonsingular TN matrices. Then $A$ factors according to Theorem~\ref{TN-classification}:
$
A=LDR,
$
where $L$ and $R$ respectively are the products of factors on the left and right of $D$ in equation~\eqref{TN-classification-eqn}. Each factor $L,$ $D,$ and $R$ is nonsingular TN, and we respectively use $N_L,$ $N_D,$ and $N_R$ to refer to the planar networks. The construction in Section~\ref{Section-TN} implies that the planar network for $A,$ say $N_A,$ is the concatenation (from left to right) of $N_L,$ $N_D,$ and $N_R,$ where $N_D$ is the network with the blue edges, $N_L$ is the network on the left side of the blue edges, and $N_R$ is the network on the right side of the blue edges -- all in Figure~\ref{Planar-bw}. So, for this proof, we refer to Figure~\ref{Planar-bw} henceforth.

The blue edges are labelled and weighted as $d_1,$ $d_2,$ $\ldots,$ $d_n$ in $N_D.$ The network $N_A$ has horizontal and non-horizontal edges. These contain $n$ directed horizontal paths from source $k$ to sink $k$ (that goes via $d_k)$ for $k\in [n].$ We refer to the horizontal path from source $k$ to sink $k$ as level-$k,$ for all $k\in [n].$ The network $N_L$ has $n-1$ non-horizontal directed paths, labelled as $l_1,$ $l_2,$ $\ldots,$ $l_{n-1}.$ The edge in $l_k$ that leads to a vertex on level-$j$ is labelled and weighted as $l_{k}^{j}.$ Similarly, network $N_R$ has $n-1$ non-horizontal directed paths, labelled as $u_1,$ $u_2,$ $\ldots,$ $u_{n-1}.$ The edge in $u_k$ that leads to a vertex on level-$j$ is labelled and weighted as $u_{k}^{j}.$ Now define
\begin{itemize}
\item[$(1)$] $Q_{d1}:=Q_{d1}(A)$ to be the total weighted sum over the family of disjoint paths from source vertices $[1,d]$ to sink vertices $[n-d,n-1]$ in $N_A.$
\item[$(2)$] $R_{[n-d+l,n-1]^{\uparrow 1}}^{u_{m_1}^{n-d+l+1}}:=R_{[n-d+l,n-1]^{\uparrow 1}}^{u_{m_1}^{n-k+l+1}}(A)$ to be the total weighted sum over the family of disjoint paths from source vertices $[n-d+l,n-1]$ to sink vertices $[n-d+l+1,n]$ in $N_R$ that use $u_{m_1}^{n-k+l+1},$ where $m_1\in [l+1].$  
\end{itemize}
With these notations, using Lindstr\"{o}m Lemma~\ref{Lin-lemma}, we have
\begin{align*}
\det A\big{(}I_{d}\big{|}J_{dl}\big{)}=Q_{d1}\sum_{m_1=1}^{l+1}R_{[n-d+l,n-1]^{\uparrow 1}}^{u_{m_1}^{n-d+l+1}}, \mbox{ for all } l\in [0,d], \mbox{ where }\sum_{m_1=1}^{d+1}R^{u_{m_1}^{n+1}}_{[n,n-1]^{\uparrow 1}}:=1.
\end{align*}
For the other set of notations, define the following:
\begin{itemize}
\item[$(1)$] $Q_{d2}:=Q_{d2}(A)$ to be the total weighted sum over the family of disjoint paths from source vertices $[d+1,n-1]$ to sink vertices $[1,n-d-1]$ in $N_A.$
\item[$(2)$] $P_{n\to n-d+l}^{d_{n-d+m_2}}:=P_{n\to n-d+l}^{d_{n-d+m_2}}(A)$ to be the total weighted sum of paths from source $n$ to sink $n-d+l$ via diagonal edge $d_{n-d+m_2},$ for $m_2 \in [0,l].$
\end{itemize}
With these notations, using Lindstr\"{o}m Lemma~\ref{Lin-lemma}, we have
\begin{align*}
\det A \big{(}[n]\setminus I_{d}\big{|}[n]\setminus J_{dl}\big{)}= Q_{d2}\sum_{m_2=0}^l P_{n\to n-d+l}^{d_{n-d+m_2}},\mbox{ for } d\in [1,n-1],
\end{align*}
$ \mbox{ where } Q_{(n-1),2}:= 1 \mbox{ for } l\in [0,d].$

Suppose $Q:=Q_{d1}Q_{d2},$ and let $\mathcal{R}^{u_k^{n-d+l}}_{n-d+m_2}$ be the set of all paths $p,$ with weights $p,$ in $N_R$ which starts from level-$(n-d+m_{2})$ and goes to level-$(n-d+l)$ using $u_{k}^{n-d+l},$ where $m_2\in [0,l],$ in Figure~\ref{Planar-bw}. We aim to prove that for all $0\leq l\leq d,$
\begin{align*} 
&\sum_{k=0}^{l} (-1)^{1+k}\det A\big{(}I_{d}\big{|}J_{dk}\big{)}\det A \big{(}[n]\setminus I_{d}\big{|}[n]\setminus J_{dk}\big{)}\\
&\hspace*{2cm}=(-1)^{1+l}Q \sum_{m_1=1}^{l+1}R^{u_{m_1}^{n-d+l+1}}_{[n-d+l,n-1]^{\uparrow 1}} \sum_{m_2=0}^{l}P^{d_{n-d+m_2}}_{n \to n-d+m_2}\sum_{i=m_1}^{n-d+(l-1)}\sum_{p\in \mathcal{R}^{u_i^{n-d+l}}_{n-d+m_2}} p,
\end{align*}
where $\sum_{i=m_1}^{n-d+l-1}\sum_{_{p\in \mathcal{R}^{u_i^{n-d+l}}_{n-d+l}}} p:=1.$ We use induction for the proof. Let us verify for $l=1$:
\begin{align*}
&\sum_{k=0}^{1} (-1)^{1+k}\det A \big{(}I_{d}\big{|}J_{dk}\big{)}\det A\big{(}[n]\setminus I_{d} \big{|}[n]\setminus J_{dk}\big{)} \\ 
&\hspace*{1cm}= Q \Big{(} - R^{u_1^{n-d+1}}_{[n-d,n-1]^{\uparrow 1}}  P^{d_{n-d}}_{n\to n-d} + \sum_{m_1=1}^{2}R^{u_{m_1}^{n-d+2}}_{[n-d+1,n-1]^{\uparrow 1}} \sum_{m_2=0}^1 P^{d_{n-d+m_2}}_{n\to n-d+1}\Big{)}\\
&\hspace*{1cm}=Q \Big{(} - u^{n-d+1}_{1} R^{u_2^{n-d+2}}_{[n-d+1,n-1]^{\uparrow 1}}  P^{d_{n-d}}_{n\to n-d} \\
&\hspace*{5cm} + \sum_{m_1=1}^{2}R^{u_{m_1}^{n-d+2}}_{[n-d+1,n-1]^{\uparrow 1}} \big{(} P^{d_{n-d}}_{n\to n-d+1} + P^{d_{n-d+1}}_{n\to n-d+1} \big{)} \Big{)} \\
&\hspace*{1cm}=-Q\Big{(} u^{n-d+1}_{1} R^{u_2^{n-d+2}}_{[n-d+1,n-1]^{\uparrow 1}}  P^{d_{n-d}}_{n\to n-d} \\
&\hspace*{5cm} - \sum_{m_1=1}^{2}R^{u_{m_1}^{n-d+2}}_{[n-d+1,n-1]^{\uparrow 1}} \big{(} P^{d_{n-d}}_{n \to n-d}\sum_{i=1}^{n-d} u_{i}^{n-d+1} + P^{d_{n-d+1}}_{n\to n-d+1} \big{)} \Big{)}\\
&\hspace*{1cm}= Q \Big{(} R^{u_{1}^{n-d+2}}_{[n-d+1,n-1]^{\uparrow 1}} \big{(} P^{d_{n-d}}_{n \to n-d}\sum_{i=1}^{n-d} u_{i}^{n-d+1} + P^{d_{n-d+1}}_{n\to n-d+1} \big{)}\\
&\hspace*{5cm} + R^{u_{2}^{n-d+2}}_{[n-d+1,n-1]^{\uparrow 1}} \big{(} P^{d_{n-d}}_{n \to n-d}\sum_{i=2}^{n-d} u_{i}^{n-d+1} + P^{d_{n-d+1}}_{n\to n-d+1} \big{)} \Big{).}
\end{align*}
Now suppose the claim is true for $l-1 \in [1,d-1],$ and proceed to verify it for $l$:
\begin{align*}
&\sum_{k=0}^{l} (-1)^{1+k} \det A \big{(}I_{d}\big{|}J_{dk}\big{)}\det A\big{(}[n]\setminus I_{d}\big{|}[n]\setminus J_{dk}\big{)}\\
&= \sum_{k=0}^{l-1} (-1)^{1+k} \det A \big{(}I_{d}\big{|}J_{dk}\big{)}\det A\big{(}[n]\setminus I_{d}\big{|}[n]\setminus J_{dk}\big{)} \\
&\hspace*{5cm} + (-1)^{1+l} \det A \big{(}I_{d}\big{|}J_{dl}\big{)}\det A\big{(}[n]\setminus I_{d}\big{|}[n]\setminus J_{dl}\big{)}\\
&= (-1)^{l} Q \sum_{m_1=1}^{l}R^{u_{m_1}^{n-d+l}}_{[n-d+l-1,n-1]^{\uparrow 1}} \sum_{m_2=0}^{l-1}P^{d_{n-d+m_2}}_{n \to n-d+m_2}\sum_{i=m_1}^{n-d+(l-2)}\sum_{p\in \mathcal{R}^{u_i^{n-d+l-1}}_{n-d+m_2}} p\\
& \hspace*{5cm} + (-1)^{1+l} Q \sum_{m_1=1}^{l+1}R^{u_{m_1}^{n-d+l+1}}_{[n-d+l,n-1]^{\uparrow 1}} \sum_{m_2=0}^l P^{d_{n-d+m_2}}_{n\to n-d+l} \\
&= (-1)^{1+l} Q \Bigg{(} - \sum_{m_1=2}^{l+1} u_{m_1-1}^{n-d+l} \sum_{j=m_1}^{l+1} R^{u_{j}^{n-d+l+1}}_{[n-d+l,n-1]^{\uparrow 1}} \sum_{m_2=0}^{l-1}P^{d_{n-d+m_2}}_{n \to n-d+m_2} \sum_{i=m_1-1}^{n-d+(l-1)} \sum_{p\in \mathcal{R}^{u_i^{n-d+l-1}}_{n-d+m_2}} p \\
& \hspace*{5cm}~+~ \sum_{m_1=1}^{l+1}R^{u_{m_1}^{n-d+l+1}}_{[n-d+l,n-1]^{\uparrow 1}} \sum_{m_2=0}^l P^{d_{n-d+m_2}}_{n\to n-d+l} \Bigg{)} \\
&= (-1)^{1+l} Q \Bigg{(} - \sum_{m_1=2}^{l+1} u_{m_1-1}^{n-d+l} \sum_{j=m_1}^{l+1} R^{u_{j}^{n-d+l+1}}_{[n-d+l,n-1]^{\uparrow 1}} \sum_{m_2=0}^{l-1}P^{d_{n-d+m_2}}_{n \to n-d+m_2} \sum_{i=m_1-1}^{n-d+(l-1)} \sum_{p\in \mathcal{R}^{u_i^{n-d+l-1}}_{n-d+m_2}} p\\
&\hspace*{5cm} ~+~ \sum_{m_1=1}^{l+1}R^{u_{m_1}^{n-d+l+1}}_{[n-d+l,n-1]^{\uparrow 1}} \sum_{m_2=0}^l P^{d_{n-d+m_2}}_{n\to n-d+m_2} \sum_{i=1}^{n-d+l-1} \sum_{p\in \mathcal{R}_{n-d+m_2}^{u_{i}^{n-d+l}}} p \Bigg{)} \\
&= (-1)^{1+l} Q \Bigg{(} - \sum_{m_1=2}^{l+1} \sum_{j=m_1}^{l+1} R^{u_{j}^{n-d+l+1}}_{[n-d+l,n-1]^{\uparrow 1}} \sum_{m_2=0}^{l-1}P^{d_{n-d+m_2}}_{n \to n-d+m_2} \sum_{i=m_1-1}^{n-d+(l-1)} \sum_{p\in \mathcal{R}^{u_i^{n-d+l-1}}_{n-d+m_2}} p u_{m_1-1}^{n-d+l} \\
& \hspace*{5cm}~+~ \sum_{m_1=1}^{l+1}R^{u_{m_1}^{n-d+l+1}}_{[n-d+l,n-1]^{\uparrow 1}} \sum_{m_2=0}^l P^{d_{n-d+m_2}}_{n\to n-d+m_2}\sum_{i=1}^{n-d+l-1} \sum_{p\in \mathcal{R}_{n-d+m_2}^{u_{i}^{n-d+l}}} p \Bigg{)} \\
&= (-1)^{1+l} Q \Bigg{(} - \sum_{m_1=2}^{l+1} \sum_{j=m_1}^{l+1} R^{u_{j}^{n-d+l+1}}_{[n-d+l,n-1]^{\uparrow 1}} \sum_{m_2=0}^{l-1}P^{d_{n-d+m_2}}_{n \to n-d+m_2} \sum_{p\in \mathcal{R}^{u_{m_1 -1}^{n-d+l}}_{n-d+m_2}} p \\
& \hspace*{5cm} + \sum_{m_1=1}^{l+1}R^{u_{m_1}^{n-d+l+1}}_{[n-d+l,n-1]^{\uparrow 1}} \sum_{m_2=0}^l P^{d_{n-d+m_2}}_{n\to n-d+m_2} \sum_{i=1}^{n-d+l-1} \sum_{p\in \mathcal{R}_{n-d+m_2}^{u_{i}^{n-d+l}}} p \Bigg{)}\\
&= (-1)^{1+l} Q \Bigg{(} - \sum_{m_1=2}^{l+1} \sum_{j=m_1}^{l+1} R^{u_{j}^{n-d+l+1}}_{[n-d+l,n-1]^{\uparrow 1}} \sum_{m_2=0}^{l-1}P^{d_{n-d+m_2}}_{n \to n-d+m_2} \sum_{p\in \mathcal{R}^{u_{m_1 -1}^{n-d+l}}_{n-d+m_2}} p \\
& \hspace*{2cm} +\sum_{m_1=2}^{l+1}R^{u_{m_1}^{n-d+l+1}}_{[n-d+l,n-1]^{\uparrow 1}} \Bigg{(} \sum_{m_2=0}^{l-1} P^{d_{n-d+m_2}}_{n\to n-d+m_2} \sum_{i=1}^{n-d+l-1} \sum_{p\in \mathcal{R}_{n-d+m_2}^{u_{i}^{n-d+l}}} p + P_{n\to n-d+l}^{d_{n-d+l}}\Bigg{)} \\
& \hspace*{6cm}+ R^{u_{1}^{n-d+l+1}}_{[n-d+l,n-1]^{\uparrow 1}} \sum_{m_2=0}^{l} P^{d_{n-d+m_2}}_{n\to n-d+m_2} \sum_{i=1}^{n-d+l-1} \sum_{p\in \mathcal{R}_{n-d+m_2}^{u_{i}^{n-d+l}}} p \Bigg{)}\\
&=(-1)^{1+l}Q \sum_{m_1=1}^{l+1}R^{u_{m_1}^{n-d+l+1}}_{[n-d+l,n-1]^{\uparrow 1}} \sum_{m_2=0}^{l}P^{d_{n-d+m_2}}_{n \to n-d+m_2}\sum_{i=m_1}^{n-d+(l-1)}\sum_{p\in \mathcal{R}^{u_i^{n-d+l}}_{n-d+m_2}} p,
\end{align*}
where $\sum_{i=m_1}^{n-d+l-1}\sum_{_{p\in \mathcal{R}^{u_i^{n-d+l}}_{n-d+l}}} p:=1.$

This completes this lemma and proves Theorem~\ref{Add-thm-3} for $P=Q^{\mathsf{c}}=[1,d].$
\end{proof}

Before we move on to prove Theorem~\ref{Add-thm-3} for the given other values of $P$ and $Q,$ let us demonstrate that the proof of Lemma~\ref{Add-thm-3-lemma} is capable of obtaining the classical result.

\begin{proof}[Gantmacher--Krein inequalities~\eqref{GK-ineq} via planar networks] To see how the computation in proof of Lemma~\ref{Add-thm-3-lemma} also derives the original inequalities due to  Gantmacher--Krein in Theorem~\ref{GK-thm}, let us continue it for $l=d-1=n-2.$
\begin{align*} 
&\sum_{k=0}^{n-2} (-1)^{1+k}\det A\big{(}I_{n-1}\big{|}J_{n-1,k}\big{)}\det A\big{(}[n]\setminus I_{n-1}\big{|}[n]\setminus J_{n-1,k}\big{)}\\
&\hspace*{4cm}=(-1)^{n-1} Q \sum_{m_1=1}^{n-1} R^{u_{m_1}^{n}}_{[n-1,n-1]^{\uparrow 1}} \sum_{m_2=0}^{n-2}P^{d_{1+m_2}}_{n \to 1+m_2}\sum_{i=m_1}^{n-2}\sum_{p\in \mathcal{R}^{u_i^{1+l}}_{1+m_2}} p,
\end{align*}
where $\sum_{i=m_1}^{n-2}\sum_{p\in \mathcal{R}^{u_i^{l+1}}_{m_2+1}} p = 1$ for $m_2=n-2.$

\noindent Recall the calculation for $\det A \big{(}I_{n-1}\big{|}J_{n-1,n}\big{)}\det A\big{(}[n]\setminus I_{n-1}\big{|}[n]\setminus J_{n-1,n}\big{)}:$
\begin{align*}
\det A \big{(}I_{n-1}\big{|}J_{n-1,n}\big{)}&= \det A \big{(}I_{n-1}\big{|}I_{n-1}\big{)}=Q_{n-1,1}, \mbox{ and}\\
\det A\big{(}[n]\setminus I_{n-1}\big{|}[n]\setminus J_{n-1,n}\big{)}&= \det A\big{(}[n]\setminus I_{n-1}\big{|}[n]\setminus I_{n-1}\big{)} = a_{nn}= \sum_{m_2=0}^{n-1} P^{d_{1+m_2}}_{n\to n}.
\end{align*}
Therefore,
\begin{align*}
&\det A \big{(}I_{n-1}\big{|}J_{n-1,n}\big{)}\det A\big{(}[n]\setminus I_{n-1}\big{|}[n]\setminus J_{n-1,n}\big{)} \\
&\hspace*{3cm}=Q_{n-1,1} \sum_{m_2=0}^{n-1} P^{d_{m_2+1}}_{n\to n}\\ 
&\hspace*{3cm}=Q_{n-1,1} \Bigg{(} \sum_{m_2=0}^{n-2} P^{d_{m_2+1}}_{n\to m_2+1}\sum_{m_1=1}^{n-1} \sum_{p\in \mathcal{R}^{u_{{m_1}}^{n}}_{m_2+1}} p ~+~ P^{d_{n}}_{n\to n}\Bigg{)}\\
&\hspace*{3cm}= Q_{n-1,1} \Bigg{(} \sum_{m_2=0}^{n-2} P^{d_{m_2+1}}_{n\to m_2+1}\sum_{m_1=1}^{n-1} R^{u_{m_1}^{n}}_{[n-1,n-1]^{\uparrow 1}} \sum_{i=m_1}^{n-2} \sum_{p\in \mathcal{R}^{u_{i}^{n-1}}_{m_2+1}} p ~+~ P^{d_{n}}_{n\to n}\Bigg{)}\\
&\hspace*{3cm}= Q_{n-1,1} \Bigg{(} \sum_{m_1=1}^{n-1} R^{u_{m_1}^{n}}_{[n-1,n-1]^{\uparrow 1}}  \sum_{m_2=0}^{n-2} P^{d_{m_2+1}}_{n\to m_2+1} \sum_{i=m_1}^{n-2} \sum_{p\in \mathcal{R}^{u_{i}^{n-1}}_{m_2+1}} p ~+~ P^{d_{n}}_{n\to n}\Bigg{)}\\
&\hspace*{3cm}= Q_{n-1,1} \sum_{m_1=1}^{n-1} R^{u_{m_1}^{n}}_{[n-1,n-1]^{\uparrow 1}}  \sum_{m_2=0}^{n-2} P^{d_{m_2+1}}_{n\to m_2+1} \sum_{i=m_1}^{n-2} \sum_{p\in \mathcal{R}^{u_{i}^{n-1}}_{m_2+1}} p ~+~ Q_{n-1,1} P^{d_{n}}_{n\to n}\\
&\hspace*{3cm}= Q_{n-1,1} \sum_{m_1=1}^{n-1} R^{u_{m_1}^{n}}_{[n-1,n-1]^{\uparrow 1}}  \sum_{m_2=0}^{n-2} P^{d_{m_2+1}}_{n\to m_2+1} \sum_{i=m_1}^{n-2} \sum_{p\in \mathcal{R}^{u_{i}^{n-1}}_{m_2+1}} p ~+~ \det A.
\end{align*}
This implies,
\begin{align*}
& \sum_{k=0}^{n-1} (-1)^{1+k} \det A \big{(}I_{n-1}\big{|}J_{n-1,k}\big{)}\det A\big{(}[n]\setminus I_{n-1}\big{|}[n]\setminus J_{n-1,k}\big{)} \\
&\hspace*{1cm}= \sum_{k=0}^{n-2} (-1)^{1+k} \det A \big{(}I_{n-1}\big{|}J_{n-1,k}\big{)}\det A \big{(}[n]\setminus I_{n-1}\big{|}[n]\setminus J_{n-1,d}\big{)} \\
&\hspace*{2cm} + (-1)^{n-1}\det A \big{(}I_{n-1}\big{|}J_{n-1,n-1}\big{)}\det A \big{(}[n]\setminus I_{n-1}\big{|}[n]\setminus J_{n-1,n-1}\big{)} \\
&\hspace*{1cm}=(-1)^{n-1} Q_{n-1,1} \sum_{m_1=1}^{n-1} R^{u_{m_1}^{n}}_{[n-1,n-1]^{\uparrow 1}}  \sum_{m_2=0}^{n-2} P^{d_{m_2+1}}_{n\to m_2+1} \sum_{i=m_1}^{n-2} \sum_{p\in \mathcal{R}^{u_{i}^{n-1}}_{m_2+1}} p \\
& \hspace*{2cm}+ (-1)^{n} Q_{n-1,1} \sum_{m_1=1}^{n-1} R^{u_{m_1}^{n}}_{[n-1,n-1]^{\uparrow 1}}  \sum_{m_2=0}^{n-2} P^{d_{m_2+1}}_{n\to m_2+1} \sum_{i=m_1}^{n-2} \sum_{p\in \mathcal{R}^{u_{i}^{n-1}}_{m_2+1}} p  + (-1)^{n}\det A \\
&\hspace*{1cm} = (-1)^{n} \det A. 
\end{align*}
This provides us with the original Gantmacher--Krein determinantal inequalities \eqref{GK-ineq}, and reinforces that Lemma~\ref{Add-thm-3-lemma} (and Theorem~\ref{Add-thm-3}) is indeed a generalization of this classical result.
\end{proof}
Let us now use the sequential action of index-row operations on the higher/complementary inequality in Lemma~\ref{Add-thm-3-lemma} to conclude Theorem~\ref{Add-thm-3}:
\begin{proof}[Proof of Theorem~\ref{Add-thm-3}]
Suppose $P,Q\subseteq [n]$ such that $|P|+|Q|=n,$ and $Q_1\subseteq P_1$ and $P_2\subseteq Q_2,$ where $P=P_1\sqcup P_2$ and $Q=Q_1\sqcup Q_2$ with $P_1<P_2$ and $Q_1<Q_2.$ Define
\begin{align*}
F_{(X,Y)}^l(A):=(-1)^{1+l}\sum_{k=0}^{l} (-1)^{1+k} \det A\big{(}X\big{|}J_{dk}\big{)} \det A\big{(}Y\big{|}[n]\setminus J_{dk}\big{)},
\end{align*}
for all $n\times n$ TN matrices $A,$ and for sets $X,Y\subseteq [n]$ such that $|X|=d$ and $|Y|=n-d.$ We want to show that $F_{(P,Q)}^l(A)\geq 0$ for all $n\times n$ TN matrices $A.$ To prove this, we use Lemma~\ref{Main-lemma-3} on the inequality $F_{\big{(}[1,d],[d+1,n]\big{)}}^l(A)\geq 0$ for all $n\times n$ TN matrices $A$ -- which we obtained in Lemma~\ref{Add-thm-3-lemma}. 
Suppose each of $P_1,P_2,Q_1,Q_2$ is nonempty, and let 
\begin{align*}
P_1&=\{p_{11}<\cdots<p_{1,p_1}\},~~P_2=\{p_{21}<\cdots<p_{2,p_2}\}, \mbox{ and}\\
Q_1&=\{q_{11}<\cdots<q_{1,q_1}\},~~Q_2=\{q_{21}<\cdots<q_{2,q_2}\}.
\end{align*}
Let $\alpha_1:[p_1+1,d] \to [n-p_2+1,n]$ be the unique order preserving map. For $\alpha_1,$ define:
\begin{align*}
\Ro_{\A_1,j}&:= \Ro_{(\alpha_1(j)-1,\alpha_1(j))}\circ \cdots \circ \Ro_{(j,j+1)}, \mbox{ for all }j\in [p_1+1,d], \mbox{ and}\\
\Ro_{\A_1}&:= \Ro_{\A_1,p_1+1}\circ \cdots\circ\Ro_{\A_1,d}.
\end{align*}
Now let $\beta_1: [d+1,d+q_1] \to [1,q_1]$ be the unique order preserving map. For $\beta_1,$ define:
\begin{align*}
\Ro_{\B_1,j}&:= \Ro_{(\beta_1(j)+1,\beta_1(j))}\circ \cdots \circ \Ro_{(j,j-1)}, \mbox{ for all }j\in [d+1,d+q_1], \mbox{ and}\\
\Ro_{\B_1}&:= \Ro_{\B_1,d+q_1}\circ \cdots\circ\Ro_{\B_1,d+1}.
\end{align*}
It is given that $P_2\subseteq Q_2,$ and $Q_1\subseteq P_1.$ Therefore let 
\begin{align*}
P_2\cap Q_2=\{q_{2,k_1}<\cdots<q_{2,k_{p_2}}\}, \mbox{ and }
Q_1\cap P_1=\{p_{1,l_1}<\cdots<p_{1,l_{q_1}}\}.
\end{align*}
Now, let $\alpha_2:[n-p_2+1,n]\to\{k_1+d+q_1,\cdots, k_{p_2}+d+q_1\}$ be the unique order preserving map. For $\alpha_2,$ define:
\begin{align*}
\Ro_{\A_2,j}&:= \Ro_{(\A_2(j)+1,\A_2(j))}\circ \cdots \circ \Ro_{(j,j-1)}, \mbox{ for all }j\in [n-p_2+1,n], \mbox{ and}\\
\Ro_{\A_2}&:= \Ro_{\A_2,n}\circ \cdots\circ\Ro_{\A_2,n-p_2+1}.
\end{align*}
Once again, let $\beta_2:[1,q_1] \to \{l_1,\cdots,l_{q_1}\}$ be the unique order preserving map. For $\beta_2,$ define:
\begin{align*}
\Ro_{\B_2,j}&:= \Ro_{(\B_2(j)-1,\B_2(j))}\circ \cdots \circ \Ro_{(j,j+1)}, \mbox{ for all }j\in [1,q_1], \mbox{ and}\\
\Ro_{\B_2}&:= \Ro_{\B_2,1}\circ \cdots\circ\Ro_{\B_2,q_1}.
\end{align*}
Let $\A_3:[d+q_1+1,n]\to Q_2$ be the unique order preserving map. For this map, define:
\begin{align*}
\Ro_{\A_3,j}&:= \Ro_{(\A_3(j)+1,\A_3(j))}\circ \cdots \circ \Ro_{(j,j-1)}, \mbox{ for all }j\in [d+q_1+1,n], \mbox{ and}\\
\Ro_{\A_3}&:= \Ro_{\A_3,n}\circ \cdots\circ\Ro_{\A_3,d+q_1+1}.
\end{align*}
Finally, let $\beta_3:[1,p_1]\to P_1$ be the unique order preserving map. For this, define:
\begin{align*}
\Ro_{\B_3,j}&:= \Ro_{(\B_3(j)-1,\B_3(j))}\circ \cdots \circ \Ro_{(j,j+1)}, \mbox{ for all }j\in [1,q_1], \mbox{ and}\\
\Ro_{\B_3}&:= \Ro_{\B_3,1}\circ \cdots\circ\Ro_{\B_3,p_1}.
\end{align*}
Observe that
\begin{align*}
\Ro_{\beta_3} \circ \Ro_{\A_3}\circ \Ro_{\beta_2} \circ \Ro_{\A_2}\circ \Ro_{\beta_1} \circ \Ro_{\A_1}\Big{(}\big{(}[1,d],[d+1,n]\big{)}\Big{)}= \Big{(}\big{(}P,Q\big{)}\Big{)}.
\end{align*}
This completes the proof when each of $P_1,P_2,Q_1,Q_2$ are nonempty. The proof is similar otherwise, which we leave for the reader to complete.
\end{proof}

Note that the class of $P$ and $Q$ stated in Theorem~\ref{Add-thm-3} identifies all possible $\big{(}(P,Q)\big{)}$ (including $P\subseteq Q$ and $Q\subseteq P)$ that can be obtained via operating sequences of index-row operations on $\big{(}([1,d],[d+1,n])\big{)}.$ Therefore we obtained all possible inequalities that can be obtained from Lemma~\ref{Add-thm-3-lemma} via Lemma~\ref{Main-lemma-1} / index-row operations.

\begin{remark}[Index-row operations, Lemma~\ref{Add-thm-3-lemma}, and Theorem~\ref{Add-thm-3}]\label{Add-thm-3-remark}

The proof of Lemma~\ref{Add-thm-3-lemma} (which is $P=Q^{\mathsf{c}}=[1,d]$ in Theorem~\ref{Add-thm-3}) uses the Lindstr\"{o}m Lemma~\ref{Lin-lemma} and judicially manages the weights in the planar network in Figure~\ref{Planar-bw}. This intricate calculation could be worse if we try to use the same planar network (via the Lindstr\"{o}m Lemma~\ref{Lin-lemma}) to prove the inequality for other values of $P$ and $Q$ in Theorem~\ref{Add-thm-3}. However, these represent a fair and straightforward deduction via index-row operations $\Ro_{(u,v)}$ acting on the inequality~\eqref{Add-thm-3-lemma-ineq} in Lemma~\ref{Add-thm-3-lemma}. This particularly signifies the importance of these index-row/column operations.
\end{remark}

\begin{remark}[Conformity and contrasts in Theorems~\ref{Add-thm-1},\ref{Add-thm-2}, and \ref{Add-thm-3}]\label{Confo-cont}
It would not be difficult to deduce the Gantmacher--Krein inequalities \eqref{GK-ineq} and parts of refinements in Theorem~\ref{Add-thm-1} as a consequence of Theorem~\ref{Add-thm-3}. Also, note that the Karlin inequalities in Theorem~\ref{Add-thm-2} differ from the inequalities in Theorem~\ref{Add-thm-3}, as it does not seem obvious how to deduce one from the other.
\end{remark}

\begin{remark}[$\Co_{(u,v)}$ acting on Theorems~\ref{Add-thm-1},\ref{Add-thm-2}, and \ref{Add-thm-3}]\label{Column-op-rem}
In Theorems~\ref{Add-thm-1},\ref{Add-thm-2}, and \ref{Add-thm-3}, we only explored the action of index-row operations and it remains to investigate the action of index-column operations on the inequalities. We expect to obtain novel and nontrivial inequalities by applying the $\Co_{(u,v)}$ on inequalities in Theorem~\ref{Add-thm-3}. We leave this for the reader to complete.
\end{remark}

\begin{remark}[Hierarchy among inequalities]\label{Hierarchy-remark}
As noted in Remark~\ref{Hierarchy-rem-0} (and recalled in Remark~\ref{Hierarchy-rem-1}), we showed that the proofs of Theorems~\ref{Add-thm-1} and \ref{Add-thm-3} reveal a certain {hierarchy} among inequalities. More specifically, we saw that using compositions of index-row operations $\Ro_{(u,v)}$ all the cases of $i\neq j$ can be derived from any of the higher/complementary $i=j$ cases in Theorem~\ref{Add-thm-1}. However, the reverse is not possible. Similarly, all the cases when $P\cap Q\neq \emptyset$ in Theorem~\ref{Add-thm-3} can be derived from the higher/complementary inequality when $P=Q^{\mathsf{c}}=[1,d],$ and the reverse, here as well, is not possible. Finally, none of the inequalities in Theorem~\ref{Add-thm-2} are complementary and, owing to Lemma~\ref{Main-lemma-3}, each of these can be derived from the other. This provided us with an inequality in which this hierarchy goes in the \textit{other} direction as well.

These comments will also be relevant in the next section (see Remark~\ref{FGJ-Skan-comp-remark}) and in Section~\ref{Section-conclusion} (see the Barrett--Johnson inequalities in Theorem~\ref{Skan-Sos} and Question~\ref{Q3}).



\end{remark}

\section{Multiplicative inequalities: main results}\label{Section-multi-results}

In this section we discuss the applicability of the operations $\So_{(u,v)},$ $\Ro_{(u,v)},$ and $\Co_{(u,v)}$ to inequalities for totally nonnegative matrices that are multiplicative. Suppose $n,l\geq 1$ are integers. A multiplicative inequality involving two collections, each consisting of $l$ minors, is of the following form:
\begin{align}\label{Multi-ineq-1}
\prod_{k=1}^{l}\det A \big{(}P_k\big{|}Q_k\big{)} \leq \prod_{k=1}^{l}\det A \big{(}I_k\big{|}J_k\big{)}, \mbox{ for all } n\times n \mbox{ TN matrices }A,
\end{align}
where $P_k,Q_k,I_k,J_k$ are nonempty subsets of $[n]$ with each $|P_k|=|Q_k|$ and $|I_k|=|J_k|.$ It is well known that basic conditions for the inequalities in \eqref{Multi-ineq-1} to hold are the following set of (necessary) multiplicity conditions:

\begin{defn}\label{Multi-ineq-defn-1}
Suppose $n,l\geq 1$ are integers, and let $P_{1},\cdots ,P_l \subseteq [n].$ Define:
\begin{align*}
\ml_P(u):=\big{|}\{k\in [l]: u\in P_k\}\big{|}, \mbox{ for all }u\in [n]. 
\end{align*}
\end{defn}

\begin{lemma}[Fallat--Gekhtman--Johnson \cite{FGJ03}]\label{FGJ-Multi-lemma}
Let $n,l\geq 1$ be integers, and inequality \eqref{Multi-ineq-1} holds. Then,
\begin{align*}
\ml_P(u)=\ml_I(u)\mbox{ and }~\ml_Q(u)=\ml_J(u),\mbox{ for all }u\in [n].
\end{align*}
\end{lemma}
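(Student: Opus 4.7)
The plan is to test the inequality~\eqref{Multi-ineq-1} against one-parameter families of TN matrices obtained by scaling a single row or column, and then to extract the multiplicity equalities by comparing the resulting powers of the parameter as it tends to $0$ and to $\infty.$ The key technical ingredient is the existence of a \emph{totally positive} test matrix, on which every minor is strictly positive; such a matrix is already provided by the planar network in Figure~\ref{Planar-bw} with all weights strictly positive, via Lindstr\"{o}m's Lemma~\ref{Lin-lemma}.

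First, I would fix $u \in [n]$ and, for $t>0,$ let $D_u(t)$ denote the diagonal matrix with $t$ in entry $(u,u)$ and $1$ elsewhere on the diagonal. Then $D_u(t)$ is TN, and so $D_u(t)A$ is TN whenever $A$ is. A direct computation shows that $\det \big(D_u(t) A\big)\big(X\big|Y\big) = t \det A\big(X\big|Y\big)$ whenever $u \in X,$ and $\det \big(D_u(t) A\big)\big(X\big|Y\big) = \det A\big(X\big|Y\big)$ otherwise, since multiplying row $u$ of $A$ by $t$ scales every submatrix that retains that row by exactly $t.$ Multiplying across the $l$ factors on each side of \eqref{Multi-ineq-1}, the inequality applied to $D_u(t) A$ becomes
\begin{equation*}
t^{\ml_P(u)} \prod_{k=1}^{l} \det A\big(P_k\big|Q_k\big) \;\leq\; t^{\ml_I(u)} \prod_{k=1}^{l} \det A\big(I_k\big|J_k\big), \quad t>0.
\end{equation*}

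Next, I would substitute a totally positive matrix $A^{*},$ so that both products above are strictly positive constants $c_1, c_2 > 0.$ The inequality then reduces to $c_1 \leq c_2 \, t^{\ml_I(u) - \ml_P(u)}$ for every $t>0;$ letting $t \to 0^{+}$ forces $\ml_I(u) \leq \ml_P(u),$ while letting $t \to \infty$ forces the reverse, so $\ml_P(u) = \ml_I(u).$ Since $u$ was arbitrary, the row multiplicity condition follows. Finally, to obtain the column condition, I would repeat the same argument with $D_u(t)A$ replaced by $A\,D_u(t),$ which scales column $u$ by $t$ and has the symmetric effect on minors (alternatively, just apply the row argument to the TN matrix $A^{T}$); this yields $\ml_Q(u) = \ml_J(u)$ for every $u \in [n].$ The only place the argument could fail is in securing the strict positivity of $c_1$ and $c_2,$ but a single totally positive $A^{*}$ handles all $u$ at once, so no genuine obstacle remains.
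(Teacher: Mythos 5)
Your proof is correct. Note that the paper states Lemma~\ref{FGJ-Multi-lemma} only as a cited result from Fallat--Gekhtman--Johnson \cite{FGJ03} and does not reproduce a proof, so there is no in-paper argument to compare against; but your degree/homogeneity argument via scaling row $u$ (respectively column $u$) by $t>0$, evaluating on a totally positive matrix, and sending $t\to 0^+$ and $t\to\infty$ is exactly the standard and essentially unique way to extract these necessary multiplicity conditions, and it is the argument used in \cite{FGJ03}. Every step checks out: $D_u(t)$ is TN for $t>0,$ TN is closed under multiplication, a minor of $D_u(t)A$ equals $t$ times the corresponding minor of $A$ precisely when $u$ lies in its row index set, a totally positive test matrix exists (e.g.\ via a planar network with all weights positive), and the two limits force the exponents on both sides to agree. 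The column case by $A D_u(t)$ or by transposing is equally sound.
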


Lemma~\ref{FGJ-Multi-lemma} is not sufficient to classify all the multiplicative inequalities, and therefore further necessary conditions are required to be found. The discussion in and around Lemma~\ref{Main-lemma-1} indicates that when the index-row and index-column operations, $\Ro_{(u,v)}$ and $\Co_{(u,v)},$ operate on an inequality for totally nonnegative matrices, they return an inequality for these matrices. This naturally gives rise to an algorithm to detect certain determinantal expressions that do not form an inequality for these matrices. For this, suppose we call the left and right hand sides of the inequalities in \eqref{Multi-ineq-1} are respectively called the lower and the higher sides. Now, if we choose consecutive $u,v\in [n]$ carefully such that $\Ro_{(u,v)}$ alters the lower side of \eqref{Multi-ineq-1}, then it is necessary for $\Ro_{(u,v)}$ to alter the higher side as well. This leads to another list of multiplicity conditions beyond those identified in Lemma~\ref{FGJ-Multi-lemma}:

\begin{defn}
Suppose $n,l\geq 1$ are integers, and let $P_{1},\cdots ,P_l \subseteq [n].$ Define:
\begin{align*}
\ml_{P}(u,v)&:=|\{k\in [l]:u\in P_k \mbox{ and }v\not\in P_k \}|,\mbox{ for all consecutive }u,v\in [n].
\end{align*}
\end{defn}

\begin{ulemma}\label{Multi-lemma-1}
Let $n,l\geq 1$ be integers, and inequality \eqref{Multi-ineq-1} holds. Then,
\begin{align*}
\ml_{P}(u,v)\leq \ml_I(u,v) \mbox{ and }\ml_{Q}(u,v)\leq \ml_J(u,v),\mbox{ for all consecutive }u,v\in [n].
\end{align*}
\end{ulemma}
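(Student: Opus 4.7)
The plan is to apply Lemma~\ref{Main-lemma-1} directly to \eqref{Multi-ineq-1}, rewritten in the form required by that lemma. With $\mathcal{I}=\{1,2\}$, $\gamma(1)=\gamma(2)=l$, coefficients $\lambda_1=1$, $\lambda_2=-1$, and the obvious assignment of index sets ($I_{1k}:=I_k$, $J_{1k}:=J_k$, $I_{2k}:=P_k$, $J_{2k}:=Q_k$), the given inequality becomes $F_{\{1,2\}}(A)\geq 0$ for all $n\times n$ TN matrices $A$. Fix consecutive $u,v\in[n]$. By the definitions in Definition~\ref{row-op}, $|1(u,v)|=\ml_I(u,v)$ and $|2(u,v)|=\ml_P(u,v)$.

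Suppose, for contradiction, that $\ml_P(u,v)>\ml_I(u,v)$ for some consecutive $u,v\in[n]$. Then the maximum is attained only by $i=2$, so $\mathcal{I}(u,v)=\{2\}$, and Lemma~\ref{Main-lemma-1}(2) yields
\[
G_{\mathcal{I}(u,v)}(A)=-\prod_{k=1}^l\det A\bigl(P_k(u,v)\big|Q_k\bigr)\geq 0
\]
for every TN matrix $A$. Since every minor of a TN matrix is nonnegative, this forces $\prod_{k=1}^l\det A\bigl(P_k(u,v)\big|Q_k\bigr)=0$ for every TN $A$. Specializing $A$ to a totally positive matrix of order $n$ (for instance a generalized Vandermonde $A_{ij}=x_i^{j-1}$ with $0<x_1<\cdots<x_n$), every square submatrix has strictly positive determinant, so each factor $\det A\bigl(P_k(u,v)\big|Q_k\bigr)$ is strictly positive---a contradiction. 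Hence $\ml_P(u,v)\leq \ml_I(u,v)$ for all consecutive $u,v$.

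The companion bound $\ml_Q(u,v)\leq \ml_J(u,v)$ follows by the column analogue of Lemma~\ref{Main-lemma-1} noted in Remark~\ref{Column-op}, or equivalently by transposing \eqref{Multi-ineq-1} and invoking the row case on $A^T$ (which is TN whenever $A$ is). The one step requiring care is ruling out accidental cancellation of the leading contribution on the $P$-side, and this is precisely why totally positive matrices---rather than arbitrary TN matrices---must be chosen as the test class at the final stage; everything else is a direct bookkeeping application of the already-established row-operation machinery.
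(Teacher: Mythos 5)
Your proof is correct and follows essentially the same route as the paper: apply the row operation $\Ro_{(u,v)}$ (via Lemma~\ref{Main-lemma-1}) to the assumed inequality, note that $\ml_P(u,v)>\ml_I(u,v)$ forces $\mathcal{I}(u,v)=\{2\}$ so the $I$-side drops to zero, and refute the resulting $\prod_{k}\det A\big(P_k(u,v)\big|Q_k\big)\leq 0$ over TN matrices, with the column bound handled by the analogous $\Co_{(u,v)}$. The only difference is that you spell out the final contradiction by evaluating at a totally positive matrix, a step the paper leaves as a one-line assertion.
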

\begin{proof}
Suppose, for contradiction, $\ml_{P}(u,v) > \ml_I(u,v),$ for some consecutive integers $u,v\in [n].$ Apply index-row operation $\Ro_{(u,v)}$ on \eqref{Multi-ineq-1}. This gives the  inequality 
\begin{align}\label{Multi-lemma-1-ineq}
\prod_{k=1}^{l}\det A \big{(}P_k(u,v)\big{|}Q_k\big{)} \leq 0, \mbox{ for all } n\times n \mbox{ TN matrices }A.
\end{align}
Expression~\eqref{Multi-lemma-1-ineq} can not hold over TN matrices. Therefore $\ml_{P}(u,v) \leq \ml_I(u,v)$ for all consecutive $u,v\in [n].$ Similarly we can prove the other inequality by applying $\Co_{(u,v)} $ instead.
\end{proof}

(See Section~\ref{minmax} for a generalization of Lemmas~\ref{FGJ-Multi-lemma} and \ref{Multi-lemma-1} that applies to a broader class of inequalities over TN matrices.) From Lemma~\ref{Multi-lemma-1} we deduce the following possibilities: consider inequality \eqref{Multi-ineq-1}, and let $u,v\in [n]$ be consecutive such that $$\So_{(u,v)}\big{(}(P_1,\cdots,P_l)\big{)}\neq \big{(}(P_1,\cdots,P_l)\big{)}.$$ Now apply $\Ro_{(u,v)}$ on inequality \eqref{Multi-ineq-1}. According to Lemma~\ref{Multi-lemma-1}, there exist two cases:
\begin{itemize}
\item[$(a)$] If $\ml_{P}(u,v)=\ml_{I}(u,v),$ then the resulting inequality is,
\begin{align*}
\prod_{k=1}^{l}\det A \big{(}P_k^{1}\big{|}Q_k\big{)} \leq \prod_{k=1}^{l}\det A \big{(}I_k^{1}\big{|}J_k\big{)}, \mbox{ for all } n\times n \mbox{ TN matrices }A,
\end{align*}
where $P_k^1:=P_k(u,v)$ and $I_k^1:=I_k(u,v),$ for all $k\in [l].$
\item[$(b)$] Otherwise, if $\ml_{P}(u,v) < \ml_{I}(u,v),$ then the resulting inequality is,
\begin{align}\label{case-b}
0 \leq \prod_{k=1}^{l}\det A \big{(}I_k^{1}\big{|}J_k\big{)}, \mbox{ for all } n\times n \mbox{ TN matrices }A.
\end{align}
where $I_k^1:=I_k(u,v),$ for all $k\in [l].$
\end{itemize}
We will see when we discuss Algorithm B that \eqref{case-b} is of little use since the resulting inequality is trivial. Before moving ahead, we need the following:


\begin{defn}[Integer pairs]
Let $n\geq 1$ be an integer: 
\begin{itemize}
\item[$(1)$] A pair $(u,v)$ is called an integer pair in $[n]$ provided $u,v\in [n].$
\item[$(2)$] An integer pair $(u,v)$ in $[n]$ is called consecutive provided $u \in \{v-1,v+1\}.$
\end{itemize}
\end{defn}

\begin{ualgorithm}\label{algo}
One of the main problems in total nonnegativity is to complete the classification of all multiplicative inequalities \eqref{Multi-ineq-1}. Lemma~\ref{Multi-lemma-1} and the index-row/column operations are useful in identifying multiplicative determinantal expressions which do not form an inequality. To discover the relevant conditions for this, suppose $n\geq 1$ is an integer and consider, for contradiction, that the following multiplicative inequality holds for nonempty $P_k^0,Q_k^0,I_k^0,J_k^0\subseteq [n]:$ 
\begin{align}\label{Multi-ineq-2}
\prod_{k=1}^{l}\det A \big{(}P_k^{0}\big{|}Q_k^{0}\big{)} \leq \prod_{k=1}^{l}\det A \big{(}I_k^{0}\big{|}J_k^{0}\big{)}, \mbox{ for all } n\times n \mbox{ TN matrices }A.
\end{align}
The success of the algorithm depends on the existence of consecutive integer pairs $(u_1,v_1),$ $\ldots,$ $(u_K,v_K)$ such that the following steps are followed:
\begin{itemize}
\item[$(1)$] Let $s=r=0.$
\item[$(2)$] Let $s=r.$ If there exists consecutive integers $u_{s+1},v_{s+1}
\in [n]$ such that 
\begin{align}\label{algo-cond-1}
\ml_{P^s}(u_{s+1},v_{s+1})>\ml_{I^s}(u_{s+1},v_{s+1}),
\end{align}
then note that inequality \eqref{Multi-ineq-2} does not hold, and break. Otherwise, go to step $(3).$
\item[$(3)$] Suppose there exits consecutive integers $u_{s+1},v_{s+1}\in [n]$ such that
\begin{align}\label{algo-cond-2}
\So_{(u_{s+1},v_{s+1})}\big{(}(P_1^s,\cdots,P_l^s)\big{)}&\neq \big{(}(P_1^s,\cdots,P_l^s)\big{)},\mbox{ and}\nonumber\\
\ml_{P^s}(u_{s+1},v_{s+1})&=\ml_{I^s}(u_{s+1},v_{s+1}).
\end{align}
Apply the index-row operation $\Ro_{(u_{s+1},v_{s+1})}$ to produce
\begin{align*}
\prod_{k=1}^{l}\det A \big{(}P_k^{s+1}\big{|}Q_k^{0}\big{)} \leq \prod_{k=1}^{l}\det A \big{(}I_k^{s+1}\big{|}J_k^{0}\big{)}, \mbox{ for all } n\times n \mbox{ TN matrices }A.
\end{align*}
\item[$(4)$] Update the counter $r=s+1$ and go to step (2).
\end{itemize}

This algorithm basically searches for a sequence of consecutive integer pairs $(u_1,v_1),\cdots,$ $(u_K,v_K)$ such that the application of each composition of index-row operations 
\begin{align}\label{algo-succeed-row-op}
\Ro_{(u_r,v_r)}\circ \cdots \circ \Ro_{(u_1,v_1)}, \mbox{ for }r\in [1,K],
\end{align}
changes the lower side of the inequality on which it acts. Further if, at some point when $r=K,$ the lower side of the inequality changes \textit{more} that the higher side (which violates Lemma~\ref{Multi-lemma-1}), then we produce a contradiction in the form of the inequality,
\begin{align}\label{contradiction}
\prod_{k=1}^{l}\det A \big{(}P_k^{K+1}\big{|}Q_k^{0}\big{)} \leq 0, \mbox{ for all } n\times n \mbox{ TN matrices }A,
\end{align}
which confirms that \eqref{Multi-ineq-2} does not hold. 

This algorithm also has its column-variant which is obtained by replacing all ``$\Ro_{(*,*)}$'' with ``$\Co_{(*,*)}$,'' all $P$ with $Q,$ and all $I$ with $J.$
\end{ualgorithm}

Let us demonstrate the applicability of Algorithm~\ref{algo} via the following example.

\begin{example}\label{multi-example-1}
We consider the following (as in Skandera \cite{Ska04}):
\begin{align*}
P_1=\{1,2,3,6\},~P_2=\{3,4\} &\mbox{ and } I_1=\{1,3,6\},~I_2=\{2,3,4\},\\
Q_1=\{1,2,4,5\},~Q_2=\{2,5\} &\mbox{ and } J_1=\{1,2,5\},~J_2=\{2,4,5\}.
\end{align*}
Here we show that $\det A\big{(}P_1\big{|}Q_1\big{)}\det A\big{(}P_2\big{|}Q_2\big{)}$ and $\det A\big{(}I_1\big{|}J_1\big{)}\det A\big{(}I_2\big{|}J_2\big{)}$ are not comparable for $6\times 6$ TN matrices, using Algorithm~\ref{algo}:
\begin{itemize}
\item[$(a)$] Suppose the following is an inequality over TN matrices:
$$
\det A\big{(}P_1\big{|}Q_1\big{)}\det A\big{(}P_2\big{|}Q_2\big{)} \geq \det A\big{(}I_1\big{|}J_1\big{)}\det A\big{(}I_2\big{|}J_2\big{)}.
$$ 
Apply index-row operation $\Ro_{(1,2)}$ on above. This yields a false inequality of the form \eqref{contradiction}. This therefore denies the inequality just above.
\item[$(b)$] Similarly, apply index-row operations $\Ro_{(4,3)} \circ \Ro_{(3,2)}$ on the expression 
$$
\det A\big{(}P_1\big{|}Q_1\big{)}\det A\big{(}P_2\big{|}Q_2\big{)} \leq \det A\big{(}I_1\big{|}J_1\big{)}\det A\big{(}I_2\big{|}J_2\big{)}
$$ 
to reach a false inequality of the form \eqref{contradiction}, as in $(a).$
\end{itemize}
See the following remark for further comments.
\end{example}

\begin{remark}[A significant outcome when Algorithm~\ref{algo} succeeds]\label{algo-succeed-row-op-remark}
We should also observe that the two inequalities in Example~\ref{multi-example-1} are denied without using any action on the column indices. This implies that the choices of the row indices $P_1,P_2, I_1, I_2$ are such that product of the minors are never going to be comparable for any choices of the column indices $Q_1,Q_2,J_1,J_2.$ More generally, if Algorithm~\ref{algo} succeeds for \eqref{Multi-ineq-2} via applying index-row operations in ~\eqref{algo-succeed-row-op}, then for all choices of $Q_k^0$ and $J_k^0,$ expression \eqref{Multi-ineq-2} shall not form an inequality over totally nonnegative matrices. A similar conclusion arises for $P_k^0$ and $I_k^0$ if Algorithm~\ref{algo} succeeds for analogous index-column operations in \eqref{algo-succeed-row-op}.
\end{remark}

In addition to the outcome above, Algorithm~\ref{algo} has important flaws that inhibit it from succeeding in all situations.

\begin{remark}[General limitations of Algorithm~\ref{algo}]\label{algo-limit-remark}
Firstly, for Algorithm~\ref{algo} to succeed, we need to ensure that the two prerequisites in step (3) are satisfied simultaneously for consecutive integers $u_{s+1}$ and $v_{s+1}.$ Otherwise we will tend towards a \eqref{case-b} scenario, which is not productive in this detection process. Thus, conditions in step (3) seem minimal. That being said, this condition is not easily satisfied when $l\geq 3,$ and always holds true for $l=2$ owing to the multiplicity Lemmas~\ref{FGJ-Multi-lemma} and~\ref{Multi-lemma-1}. Secondly, the larger issue with this algorithm is that the operations we apply on the row indices show little connection with the corresponding column indices, despite the fact that the given algorithm can also be run (simultaneously or otherwise) on the column indices. Clearly for this reason, the algorithm is not sufficient to deduce all of the desired multiplicative inequalities. 

The second shortcoming can be avoided by finding an \textit{updated} form of the indices which \textit{nicely} incorporates the row and column indices together. If we restrict ourselves to $l=2$ in \eqref{Multi-ineq-1} then a slightly updated algorithm would be sufficient as well, which we discuss next.
\end{remark}

\subsection*{Smallest multiplicative inequalities} 
We call the multiplicative inequalities in \eqref{Multi-ineq-1} as the smallest multiplicative inequalities whenever $l=2.$ These smallest inequalities over $n\times n$ TN matrices are classified in terms of the smallest multiplicative inequalities, consisting only of the principal minors, over $2n \times 2n$ TN matrices. This correspondence is the key which provides us with the required \textit{combination} of the row and column indices (which we alluded to in the previous paragraph) for Algorithm~\ref{algo} to succeed completely. The precise result containing the correspondence is the following:

\begin{defn}[Consecutive, contiguous, and even]\label{cce-defn}
Henceforth we will use the following terms relative to the underlying set that will be clear from context. Let $n,k\geq 1$ be integers, and suppose $M:=\{m_1 < \cdots < m_k\} \subseteq [n].$ 
\begin{itemize}
\item[$(1)$] Integers $u,v$ are called {\em consecutive in} $M$ if $u=m_l$ and $v=m_{l+1},$ or vice versa, for some $l\in [1,k-1].$
\item[$(2)$] A subset $S\subseteq M$ is called {\em contiguous in} $M$ if there exists an interval $L \subseteq [k]$ such that $S=\{m_l:l\in L\}.$
\item[$(3)$] A subset $S\subseteq M$ is \textit{even} if the number of elements in $S$ is even. 
\item[$(4)$] For every integer $l,$ $~l- M:=\{l- m:m\in M\}.$
\end{itemize}
\end{defn}

\begin{theorem}[Fallat--Gekhtman--Johnson \cite{FGJ03}, Skandera \cite{Ska04}]\label{FGJ-Skan-thm}
Let $n\geq 1$ be an integer and $P_1, P_2, Q_1, Q_2, I_1, I_2, J_1, J_2 \subseteq [n]$ be nonempty with each $|P_k|=|Q_k|$ and $|I_k|=|J_k|.$ Define
\begin{align*}
R_1&:=P_1\cup (2n+1-Q_2),~R_2:=P_2 \cup (2n+1-Q_1), \\
K_1&:=I_1\cup (2n+1-J_2),~K_2:=I_2 \cup (2n+1-J_1), \mbox{ and}\\
M&:= (R_1\setminus R_2)\cup (R_2\setminus R_1).
\end{align*}
If $|P_1|+|P_2|=|I_1|+|I_2|=n,$ then the following are equivalent:
\begin{itemize}
\item[$(1)$] For all $n\times n$ totally nonnegative matrices $A,$
\begin{align}\label{smallest-multi}
\det A\big{(}P_1\big{|}Q_1\big{)}\det A\big{(}P_2\big{|}Q_2\big{)} \leq \det A\big{(}I_1\big{|}J_1\big{)}\det A\big{(}I_2\big{|}J_2\big{)}.
\end{align}
\item[$(2)$] For all $2n\times 2n$ totally nonnegative matrices $B,$
\begin{align}\label{FGJ-condition}
\det B\big{(}R_1\big{|}R_1\big{)}\det B\big{(}R_2\big{|}R_2\big{)} \leq \det B\big{(}K_1\big{|}K_1\big{)}\det B\big{(}K_2\big{|}K_2\big{)}.
\end{align}
\item[$(3)$] Multisets $R_1\cup R_2$ and $K_1\cup K_2$ are equal, and for all even contiguous $S \subseteq M,$
\begin{align}\label{Skandera-condition}
\max\{|S\cap R_1|,|S \cap R_2|\}\geq \max\{|S\cap K_1|,|S\cap K_2|\}.
\end{align}
\end{itemize}
\end{theorem}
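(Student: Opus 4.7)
The plan is to prove the three-way equivalence by routing through the principal-minor form in assertion (2), which then opens up to a combinatorial analysis via planar networks.

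For (1) $\Leftrightarrow$ (2), the bridge is a block construction that lifts an $n\times n$ TN matrix $A$ to a $2n\times 2n$ TN matrix $B$ so that each bilinear minor $\det A(P|Q)$ reappears as a principal minor $\det B(P \cup (2n{+}1-Q))$. This is precisely what the definitions of $R_1,R_2,K_1,K_2$ encode. A natural choice is a block matrix that glues $A$ with an anti-diagonal-flipped identity using identity blocks, so that the principal submatrix on $R=P\cup(2n{+}1-Q)$ is, after a signed permutation, block triangular with $A(P|Q)$ on one diagonal block and identities elsewhere; this makes $\det B(R)=\det A(P|Q)$. One then checks in both directions that every $n\times n$ TN $A$ produces a legitimate $2n\times 2n$ TN $B$, and that specializing a principal-minor inequality on all such $B$ recovers the desired mixed inequality on $A$. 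As in the proofs of Lemmas~\ref{Main-lemma-1} and \ref{Main-lemma-2}, I would reduce to nonsingular matrices by density and continuity of the determinant.

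For (2) $\Leftrightarrow$ (3), the equality of multisets $R_1\cup R_2=K_1\cup K_2$ is forced by Lemma~\ref{FGJ-Multi-lemma}. Granting this, I would use Theorem~\ref{TN-classification} and the Lindstr\"om--Gessel--Viennot Lemma~\ref{Lin-lemma} to rewrite each principal minor $\det B(R_i)$ (and $\det B(K_j)$) as a weighted sum over disjoint path families in a canonical planar network. The difference $\det B(K_1)\det B(K_2)-\det B(R_1)\det B(R_2)$ then becomes a signed sum over pairs of disjoint path families, and the goal is a weight-preserving, sign-reversing involution whose fixed points contribute nonnegatively. The set $M$ plays its role because the families from the $R_i$'s and from the $K_j$'s agree outside $M$, and an even contiguous $S\subseteq M$ is exactly a block on which one can swap two strands between the pair of families without affecting edges outside $S$; the parity condition is what makes the swap well-defined as an involution. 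For the necessity of (3), whenever the max-inequality \eqref{Skandera-condition} fails on some even contiguous $S$, I would build a one-parameter family of TN matrices via the bidiagonal factorization Theorem~\ref{TN-classification} whose weights are concentrated on the strands crossing $S$; letting the parameter tend to infinity produces a leading-order monomial with the wrong sign, contradicting the inequality.

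The main obstacle will be the sufficiency half of (3) $\Rightarrow$ (2): constructing the involution cleanly and verifying that all fixed points contribute nonnegatively. The subtlety is that two path families generally have many possible resolutions at a ``crossing,'' and only the parity balance of $|S\cap R_1|$ against $|S\cap K_1|$ on each contiguous block of $M$ dictates which swap is admissible; mishandling odd or non-contiguous pieces of $M$ creates residues outside $S$ that break the bijection. Once this combinatorial core is in place, the remaining implications assemble: (2) $\Rightarrow$ (1) follows by specialization via the lifting, (1) $\Rightarrow$ (3) follows from the multiplicity Lemmas~\ref{FGJ-Multi-lemma} and \ref{Multi-lemma-1} supplemented by the test matrices, and (3) $\Rightarrow$ (1) then comes for free through (2). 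I would also keep in mind, as a cross-check and potential alternative to the necessity direction, that the row and column operations $\Ro_{(u,v)}$ and $\Co_{(u,v)}$ of Section~\ref{section-row-op} furnish a purely algebraic way to transport a ``top'' complementary inequality satisfying \eqref{Skandera-condition} to every instance compatible with the same multiplicity data, echoing the hierarchy described in Remarks~\ref{Hierarchy-rem-1} and \ref{Hierarchy-remark}.
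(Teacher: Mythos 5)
The paper does not prove Theorem~\ref{FGJ-Skan-thm} at all: it is stated and cited as a known result, with $(2)\Leftrightarrow(3)$ due to Fallat--Gekhtman--Johnson \cite{FGJ03} and the reduction of $(1)$ to $(2)$ due to Skandera \cite{Ska04}. There is therefore no in-paper argument to compare your proposal against; the only fair comparison is to those external references. Measured against them, your high-level architecture is the right one -- a lift of mixed minors to principal minors of a $2n\times 2n$ TN matrix for $(1)\Leftrightarrow(2)$, and a planar-network/Lindstr\"om--Gessel--Viennot analysis with a sign-reversing involution for sufficiency plus parameter-degenerate test matrices for necessity for $(2)\Leftrightarrow(3)$. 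The cyclic scheme $(1)\Rightarrow(3)\Rightarrow(2)\Rightarrow(1)$ you describe is also logically sound.

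However, each of the three hard steps is left open, and the one construction you make concrete is incorrect as written. First, a block matrix that glues $A$ with an anti-diagonal-flipped identity via identity blocks does \emph{not} give a TN matrix: reversing a block of columns flips the sign of the $2\times 2$ minors inside that block (for instance $\det B(\{1,2\}\,|\,\{n+1,n+2\})$ becomes $-\det A(\{1,2\}\,|\,\{n-1,n\})\le 0$ under the naive gluing), so the lift has to be built much more carefully -- effectively by bending the planar network of $A$ so its sinks are re-read as sources $n{+}1,\ldots,2n$ -- and verifying total nonnegativity of the result is itself a real lemma, not a check. Second, the sign-reversing involution underlying $(3)\Rightarrow(2)$ is identified only as ``the main obstacle'' and not constructed; this is precisely the technical core of \cite{Ska04} and cannot be waved through. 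Third, $(1)\Rightarrow(3)$ requires strictly more than Lemmas~\ref{FGJ-Multi-lemma} and~\ref{Multi-lemma-1}: those control only the $|S|=2$ instances of \eqref{Skandera-condition}, and getting the condition for larger even contiguous $S$ requires explicitly exhibiting TN matrices whose minors concentrate weight on a chosen monomial; you gesture at this but supply no construction. In short, your plan is a faithful outline of what the cited papers do, but none of the three essential steps is actually carried out, and the lift as you propose it fails to preserve total nonnegativity.
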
 

We reasoned in Remark~\ref{algo-limit-remark} that in order to use the index-row/column operations in an effort to classify multiplicative inequalities, we need to apply these operations on a mix of row and column indices. For the smallest multiplicative inequalities, Theorem~\ref{FGJ-Skan-thm} provides us with one such mix, namely $(R_1, R_2)$ and $(K_1,K_2),$ and we have obtained an upgraded algorithm that can, in fact, detect all the multiplicative expressions that do not form an inequality:

\begin{utheorem}\label{Multi-thm-1}
The following is equivalent to (1), (2), and (3) in Theorem~\ref{FGJ-Skan-thm}:
\vspace*{1mm}
\begin{itemize}
\item[$(4)$] Multisets $R_1\cup R_2$ and $K_1\cup K_2$ are equal, and for all consecutive integer pairs $(u_1,v_1),\cdots,(u_k,v_k)$ in $[2n],$ for all integers $k\geq 1,$
\begin{align*}
\mbox{whenever }~\So_{(U_j,V_j)} \big{(}(R_1,R_2)\big{)} &\neq \So_{(U_{j-1},V_{j-1})} \big{(}(R_1,R_2)\big{)},\\
\mbox{then }~\So_{(U_j,V_j)} \big{(}(K_1,K_2)\big{)} &\neq \So_{(U_{j-1},V_{j-1})} \big{(}(K_1,K_2)\big{)},
\end{align*}
where, $\So_{(U_{0},V_{0})}:=\So_{(1,1)},$ and 
$\So_{(U_{j},V_{j})}:=\So_{(u_{j},v_{j})}\circ\So_{(U_{j-1},V_{j-1})}$ for all $j\in [1,k].$
\end{itemize}
\end{utheorem}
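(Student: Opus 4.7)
The plan is to prove the chain $(2) \Leftrightarrow (4)$ by establishing $(2) \Rightarrow (4)$ directly via the row and column operations, and $(4) \Rightarrow (3)$ by a combinatorial analysis matching condition $(4)$ against Skandera's criterion; the remaining implications $(3) \Leftrightarrow (2)$ are already provided by Theorem~\ref{FGJ-Skan-thm}. The unifying observation is that on principal minors, the combined operation $\Ro_{(u,v)} \circ \Co_{(u,v)}$ sends $\det B(R|R)$ to $\det B(R(u,v)|R(u,v))$, another principal minor indexed by $\So_{(u,v)}(R)$; thus the set operation $\So_{(u,v)}$ appearing in $(4)$ is precisely the ``shadow'' on row/column indices of the combined row--column operation acting on the principal-minor inequality in $(2)$.

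For $(2) \Rightarrow (4)$, I would argue contrapositively. Given a witness sequence $(u_1,v_1), \ldots, (u_k,v_k)$ chosen with $k$ minimal so that $(4)$ fails only at step $k$ -- i.e.\ $\ml_{R^{k-1}}(u_k,v_k) \geq 1$ while $\ml_{K^{k-1}}(u_k,v_k) = 0$, where $R^s := \So_{(U_s, V_s)}((R_1,R_2))$ and $K^s := \So_{(U_s, V_s)}((K_1,K_2))$ -- I would propagate the principal-minor inequality in $(2)$ through the first $k-1$ steps by applying $\Ro_{(u_s, v_s)} \circ \Co_{(u_s,v_s)}$ at each stage. By Lemma~\ref{Main-lemma-1} together with the observation that principal minors remain principal under the combined operation, this yields a valid inequality at each intermediate step,
\[
\det B(R_1^{s}|R_1^{s}) \det B(R_2^{s}|R_2^{s}) \leq \det B(K_1^{s}|K_1^{s}) \det B(K_2^{s}|K_2^{s}), \qquad s \leq k-1,
\]
provided the intermediate multiplicities $\ml_{R^{s-1}}(u_s,v_s) = \ml_{K^{s-1}}(u_s,v_s)$ match on both sides. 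A pruning-and-rebalancing argument on the witness -- using Lemma~\ref{Multi-lemma-1} to excise $R$-inert steps and to replace steps where $\ml_R < \ml_K$ by shorter blocks that achieve the same $R$-evolution while respecting $(4)$ -- reduces to the matching regime. Finally, applying $\Ro_{(u_k, v_k)}$ to the step-$(k-1)$ inequality drops the higher side (since $\ml_{R^{k-1}} > \ml_{K^{k-1}} = 0$) and forces a product of two nonnegative minors to be $\leq 0$ for every TN matrix $B$, which is impossible on a generic TN matrix and contradicts $(2)$.

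For $(4) \Rightarrow (3)$, the multiset equality $R_1 \cup R_2 = K_1 \cup K_2$ follows from instantiating $(4)$ with every single consecutive pair $(u,v)$: the simultaneous triggering of the swap condition on both pairs forces $\ml_R(w) = \ml_K(w)$ for each $w \in [2n]$. For the Skandera max inequality on even contiguous $S \subseteq M$, I would prescribe an ordered sequence of consecutive swaps that condenses the $S$-part of one of $R_1, R_2$ into a block realizing $\max\{|S \cap R_1|, |S \cap R_2|\}$, while simultaneously tracking its effect on $(K_1, K_2)$. Condition $(4)$ then forces the $K$-side to evolve in lockstep with every $R$-move in the prescribed sequence, producing a $K$-configuration whose own block is constrained to have size at most that of the $R$-block -- exactly the desired inequality $\max\{|S\cap R_1|,|S\cap R_2|\} \geq \max\{|S\cap K_1|,|S\cap K_2|\}$.

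The principal obstacle lies in the intermediate-step bookkeeping for $(2) \Rightarrow (4)$: arranging a minimal witness so that $\ml_{R^{s-1}} = \ml_{K^{s-1}}$ at every prior step is delicate, because even one intermediate step with $\ml_{R^{s-1}} < \ml_{K^{s-1}}$ collapses the propagated inequality to a trivial one. I expect to handle this by induction on the total multiplicity discrepancy along the sequence, substituting problematic steps with balanced sub-sequences that preserve both the minimality of the witness and the violation at step $k$. A secondary challenge is the combinatorial design of the swap sequence in the $(4) \Rightarrow (3)$ step: since $\So$-operations do not in general commute, the sequence must be chosen with care so that the prescribed condensation on the $R$-side triggers $(4)$ at exactly the right moments to force the required bound on the $K$-configuration.
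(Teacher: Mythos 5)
Your overall plan -- establish $(4) \Leftrightarrow (3)$ (or $(2)$), handling one direction by propagating the principal-minor inequality through $\Ro_{(u,v)} \circ \Co_{(u,v)}$ and the other by a combinatorial analysis of Skandera's criterion -- matches the paper's strategy. However, both halves of your argument diverge from the paper's in ways that matter.

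For $(2) \Rightarrow (4)$: you identify intermediate-step bookkeeping as the principal obstacle and propose an induction with ``pruning-and-rebalancing.'' This is unnecessary, and the proposed fix would not actually work as described. In condition $(4)$, the relevant sequences are those for which $\So_{(u_j,v_j)}$ changes the $R$-pair at \emph{every} step through the failure index; along such a sequence one has $\ml_{R^{s-1}}(u_s,v_s) \geq 1$, and then Lemma~\ref{Multi-lemma-1} together with the multiset equality (which is preserved inductively when the two multiplicities coincide) forces $\ml_{R^{s-1}}(u_s,v_s) = \ml_{K^{s-1}}(u_s,v_s)$ automatically: for $l=2$ the combination $\ml_R=1,\ml_K=2$ (or vice versa) is incompatible with the multiset constraint, and $\ml_R\geq 1, \ml_K=0$ violates Lemma~\ref{Multi-lemma-1}. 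So the ``matching regime'' you worry about is the only regime that occurs, and the $\Ro\circ\Co$-propagation goes through without any pruning. Conversely, the pruning operation you describe -- excising $R$-inert steps from a witness sequence -- would not preserve the violation, since removing a step where only $K$ changed alters the entire downstream $K$-trajectory, and the failure at step $k$ need not survive. So this part of the write-up introduces (and then incompletely patches) a problem that the statement itself avoids.

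For $(4) \Rightarrow (3)$: your direct ``condensation'' argument is too vague to evaluate and does not reproduce the structure needed. Condition $(4)$ tells you only that $K$ \emph{changes} whenever $R$ does; it says nothing about \emph{how} $K$ changes, so the claim that $K$ ``evolves in lockstep'' and ends up with a block ``constrained to have size at most that of the $R$-block'' does not follow. The paper instead argues contrapositively: assume $(3)$ fails and take a \emph{minimal} even contiguous $S\subseteq M$ violating Skandera's criterion. It then proves (Claims~A and~B) that $D_S := S \setminus \{\min S, \max S\}$ meets each of $R_1,R_2,K_1,K_2$ in the same cardinality, runs a concrete iterative reduction on $D_S$ that strips two interior elements at a time while preserving minimality of $S$, and finally exhibits an explicit composition $\So_{l+1}\circ\cdots\circ\So_1$ that changes $R$ at every stage but fails to change $K$ at the last, contradicting $(4)$. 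Your sketch contains none of the minimality bookkeeping or the structural Claims~A/B that make this argument go, and I do not see how to run a forward ``condensation'' argument without essentially rediscovering them. This is where the real content of the proof lives, and it is missing from your proposal.
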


Theorem~\ref{Multi-thm-1} states that a smallest multiplicative inequality is valid over TN matrices if and only if Algorithm~\ref{algo} fails for the given slightly modified expression provided in \eqref{FGJ-condition}. 

It is now worthwhile and appropriate to recall Remark~\ref{MainLemma-converse} where we discussed Lemma~\ref{Main-lemma-1} and pointed out that compositions of the index-row and index-column operations $\Ro_{(u,v)}$ and $\Co_{(u,v)}$ acting on inequalities over TN matrices return inequalities for TN matrices, and the converse of this statement seems too good to be true in general. However, for the class of the smallest multiplicative inequalities the converse is indeed true, though, in a very slight modified sense. If needed, review Remark~\ref{MainLemma-action} before we state the precise result below:

\begin{utheorem}\label{Multi-thm-2}
The following is equivalent to (1), (2), and (3) in Theorem~\ref{FGJ-Skan-thm}:
\begin{itemize}
\item[$(5)$] Multisets $R_1\cup R_2$ and $K_1\cup K_2$ are equal, and for all consecutive integer pairs $(u_1,v_1),\ldots,(u_k,v_k)$ in $[2n],$ for all integers $k\geq 1,$ the index-row operation
\begin{align}\label{Multi-thm-2-ineq}
\Ro_{(u_{k},v_{k})} \circ \cdots \circ \Ro_{(u_1,v_1)}
\end{align}
applied on the determinantal expression in \eqref{FGJ-condition} in Theorem~\ref{FGJ-Skan-thm} returns an expression which forms a valid inequality over all $2n\times 2n$ totally nonnegative matrices.
\end{itemize}
This result also has the ``mixed'' row/column-variant which we obtain by replacing any of the ``$\Ro_{(*,*)}$''s with ``$\Co_{(*,*)}$''s in \eqref{Multi-thm-2-ineq}.
\end{utheorem}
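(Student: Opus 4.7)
The plan is to prove $(2) \Leftrightarrow (5)$ directly, which combined with Theorem~\ref{FGJ-Skan-thm} yields the full chain of equivalences. The forward direction $(2) \Rightarrow (5)$ is straightforward: the first clause of (5) follows from condition (3) of Theorem~\ref{FGJ-Skan-thm}, and for the second clause one simply iterates Lemma~\ref{Main-lemma-1}, which guarantees that a single application of $\Ro_{(u_i, v_i)}$ to a valid inequality over $2n \times 2n$ TN matrices returns a valid inequality. Hence any finite composition $\Ro_{(u_k,v_k)}\circ\cdots\circ\Ro_{(u_1,v_1)}$ applied to \eqref{FGJ-condition} yields a valid inequality.

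For the reverse direction $(5) \Rightarrow (2)$, my plan is to route through Theorem~\ref{Multi-thm-1}'s equivalence $(2)\Leftrightarrow(4)$ and establish $(5)\Rightarrow(4)$ by contrapositive. Suppose (4) fails, witnessed by a sequence $(u_1, v_1), \ldots, (u_k, v_k)$ and a stage $j$ at which the $\So$-trajectory of $(R_1, R_2)$ is altered while that of $(K_1, K_2)$ is fixed. The critical step is to reduce to a minimal counterexample so that in the corresponding $\Ro$-trajectory both the $R$-side and the $K$-side products in \eqref{FGJ-condition} remain \textit{alive} -- that is, are not eliminated by the restriction to $\mathcal{I}(u,v)$ in Definition~\ref{row-op} -- and update in lockstep with the $\So$-trajectory through stage $j-1$. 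Granted this synchronization, at stage $j$ we have $\ml_{R^{j-1}}(u_j, v_j) \geq 1 > 0 = \ml_{K^{j-1}}(u_j, v_j)$, so applying $\Ro_{(u_j, v_j)}$ zeroes out the $K$-side and leaves an expression of the form $-\det B\big{(}R_1^{j}\big{|}R_1\big{)}\det B\big{(}R_2^{j}\big{|}R_2\big{)}\geq 0$. For a nonsingular TN matrix $B$ realized by the planar network of Figure~\ref{Planar-bw}, Lindstr\"{o}m's Lemma~\ref{Lin-lemma} shows this product of minors is generically strictly positive as a weighted sum over nonempty families of disjoint paths, so the proposed inequality fails on a dense subset of TN matrices, contradicting (5).

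The main obstacle is the synchronization step, namely arranging that the $\Ro$- and $\So$-trajectories coincide through stage $j-1$. To achieve this I would successively prune from the $\So$-sequence any stage $i < j$ where $\ml_{R^{i-1}}(u_i, v_i)\neq \ml_{K^{i-1}}(u_i, v_i)$, since such a stage would cause $\Ro_{(u_i,v_i)}$ to kill the $R$-side (if $\ml_R < \ml_K$) or prematurely kill the $K$-side (if $\ml_R > \ml_K$) in the $\Ro$-trajectory. The pointwise multiplicity equality $\ml_R(u) = \ml_K(u)$ for all $u$, forced by the multiset equality via Lemma~\ref{FGJ-Multi-lemma}, constrains the local differences between $\ml_R(u,v)$ and $\ml_K(u,v)$, and I expect this is precisely the structural fact that permits the pruning to terminate with a pruned sequence still witnessing the failure of (4) at stage $j$.

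The mixed row/column variant follows from the standard transposition symmetry: since $B$ is TN iff $B^T$ is TN, a column operation $\Co_{(u,v)}$ applied to a determinantal expression in $B$ coincides with a row operation $\Ro_{(u,v)}$ applied to the corresponding expression in $B^T$. Hence any mixed sequence in \eqref{Multi-thm-2-ineq} reduces to a pure row-operation sequence applied to either $B$ or $B^T$, and the argument above transfers verbatim.
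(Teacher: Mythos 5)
The paper itself does not spell out a proof of Theorem~\ref{Multi-thm-2}: Remark~\ref{Multi-thm-2-significance} states only that the result is a ``straightforward deduction from Theorem~\ref{Multi-thm-1}'' and leaves it to the reader, so there is no written argument to compare against line by line. Your forward direction $(2)\Rightarrow(5)$ is correct and is exactly the intended route (iterate Lemma~\ref{Main-lemma-1}, note multiset equality follows from Lemma~\ref{FGJ-Multi-lemma}), and your observation that the contrapositive $(5)\Rightarrow(4)$ should be attacked by tracking when the $\Ro$- and $\So$-trajectories of $\big((R_1,R_2),(K_1,K_2)\big)$ coincide is the right one.

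However, the ``pruning'' step does not work as you have stated it, and you yourself flag it as an expectation rather than an argument. If at some stage $i<j$ you have $\ml_{R^{i-1}}(u_i,v_i)=0<1=\ml_{K^{i-1}}(u_i,v_i)$, deleting the pair $(u_i,v_i)$ from the sequence leaves the $R$-trajectory unchanged (since $R$ did not move there) but \emph{does} change the $K$-trajectory at every subsequent stage. After pruning there is no reason why the step that was stage $j$ should still exhibit $\ml_R>\ml_K$ for the \emph{new} $K$-sets; the witness for $\neg(4)$ can simply evaporate. Worse, once a single step with $\ml_R\neq\ml_K$ has occurred, the pointwise identity $\ml_R(w)=\ml_K(w)$ that you invoke via Lemma~\ref{FGJ-Multi-lemma} is no longer guaranteed for the transformed sets (it is preserved only as long as $\ml_R(u_i,v_i)=\ml_K(u_i,v_i)$ at every step), so the ``only a $0/1$ split is possible'' dichotomy you rely on is not available after the first divergence. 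The clean repair is not to prune an arbitrary witness, but to use the \emph{specific} sequence produced in the proof of Theorem~\ref{Multi-thm-1}'s implication $\neg(3)\Rightarrow\neg(4)$: there each $\So_{(*,*)}$ is chosen so that it changes both $(R_1,R_2)$ and $(K_1,K_2)$ at every step before the last. Under multiset equality this forces $\ml_R(u_i,v_i)=\ml_K(u_i,v_i)$ for all $i<k$, so the $\Ro$- and $\So$-trajectories agree with both terms alive until the final step, where $\ml_R(u_k,v_k)=1>0=\ml_K(u_k,v_k)$ kills the $K$-side and produces $-\det B\big(R_1^k\big|R_1\big)\det B\big(R_2^k\big|R_2\big)\geq 0$, which fails on the dense set of totally positive $B$. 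Your closing paragraph on the mixed row/column variant via transposition is fine.
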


\begin{remark}[Significance of Theorems~\ref{Multi-thm-1} and \ref{Multi-thm-2}]\label{Multi-thm-2-significance}
As discussed earlier, Theorem~\ref{Multi-thm-2} can be viewed as the converse of Lemma~\ref{Main-lemma-1} for the smallest multiplicative inequalities. Because of the universality of Lemma~\ref{Main-lemma-1}, Theorem~\ref{Multi-thm-2} seems a very natural classification to consider for all the smallest multiplicative inequalities. Hence Theorem~\ref{Multi-thm-2} is a constructive and worthy addition to the list of equivalent conditions in the classification Theorem~\ref{FGJ-Skan-thm} due to Fallat--Gekhtman--Johnson \cite{FGJ03} and Skandera \cite{Ska04} {(which was also later revisited by Rhoades--Skandera \cite{RS2005, RS2006})}. 

Note that the action of index-row operations $\Ro_{(*,*)}$ does not impede any such index-column operation $\Co_{(*,*)},$ and vice versa. Consequently, proof of Theorem~\ref{Multi-thm-2} is a straightforward deduction from Theorem~\ref{Multi-thm-1}, which we leave for the reader to complete.
\end{remark}

The class of smallest multiplicative inequalities for TN matrices is fairly large and complicated. For pragmatic purposes therefore, it is reasonable to know different ways to identify these inequalities. Algorithm~\ref{algo} provides other such ways, and we show this via Theorem~\ref{Multi-thm-1} and Theorem~\ref{Multi-thm-2}. Let us demonstrate the applicability of these via the example below.

\begin{example}\label{multi-example-2}
We consider the following:
\begin{align*}
P_1=\{1,3,4\},~P_2=\{2,5,6\} &\mbox{ and } I_1=\{1,3,4\},~I_2=\{2,5,6\},\\
Q_1=\{1,2,3\},~Q_2=\{4,5,6\} &\mbox{ and } J_1=\{1,2,4\},~J_2=\{3,5,6\}.
\end{align*}
Here we show that $\det A\big{(}P_1\big{|}Q_1\big{)}\det A\big{(}P_2\big{|}Q_2\big{)}$ and $\det A\big{(}I_1\big{|}J_1\big{)}\det A\big{(}I_2\big{|}J_2\big{)}$ are not comparable for $6\times 6$ TN matrices. This is equivalent to showing that $\det B\big{(}R_1\big{|}R_1\big{)}\det B\big{(}R_2\big{|}R_2\big{)}$ and $\det B\big{(}K_1\big{|}K_1\big{)}\det B\big{(}K_2\big{|}K_2\big{)}$ are not comparable for $12\times 12$ TN matrices, where
\begin{align*}
R_1&:= \{1,3,4,7,8,9\},~R_2:=\{2,5,6,10,11,12\}, \\
K_1&:= \{1,3,4,9,11,12\},~K_2:=\{2,5,6,7,8,10\}.
\end{align*}
\begin{itemize}
\item[$(a)$] We leave it to the reader to verify and convince themselves that 
$$
\det A\big{(}P_1\big{|}Q_1\big{)}\det A\big{(}P_2\big{|}Q_2\big{)} \leq \det A\big{(}I_1\big{|}J_1\big{)}\det A\big{(}I_2\big{|}J_2\big{)}
$$ 
cannot be falsified by directly applying Algorithm~\ref{algo} on it. In other words, Algorithm~\ref{algo} fails to detect that this expression does not form an inequality (unlike Example~\ref{multi-example-1}). This limitation is also outlined in Remark~\ref{algo-limit-remark} and, hence, this is where Theorems~\ref{Multi-thm-1} and~\ref{Multi-thm-2} are required to be applied. 

\item[$(b)$] Following Theorem~\ref{Multi-thm-2}, if we apply $\Ro_{(6,7)}$ on the equivalent
\begin{align}\label{example-ineq}
\det B\big{(}R_1\big{|}R_1\big{)}\det B\big{(}R_2\big{|}R_2\big{)} \leq \det B\big{(}K_1\big{|}K_1\big{)}\det B\big{(}K_2\big{|}K_2\big{)}
\end{align}
then we can obtain a false inequality of the form \eqref{contradiction}, which therefore denies the inequality just above and hence the one in part $(a).$

\item[$(c)$] There are various ways (similar to Example~\ref{multi-example-1}) to deny 
$$
\det A\big{(}P_1\big{|}Q_1\big{)}\det A\big{(}P_2\big{|}Q_2\big{)} \geq \det A\big{(}I_1\big{|}J_1\big{)}\det A\big{(}I_2\big{|}J_2\big{)}.
$$ 
For example, apply $\Co_{(2,3)}$ or $\Co_{(4,5)}$ to obtain a false inequality of the form \eqref{contradiction}.
\end{itemize}
Example~\ref{multi-example-1} and part $(c)$ in this example show the importance of the Algorithm~\ref{algo}, and that we need not always refer to Theorems~\ref{FGJ-Skan-thm} or \ref{Multi-thm-2}. Part $(a),$ on the other hand, signifies the value of Theorems~\ref{FGJ-Skan-thm} and \ref{Multi-thm-2} over Algorithm~\ref{algo}. However, we also understand that even here, owing to Theorem~\ref{Multi-thm-2}, Algorithm~\ref{algo} is used for the revised determinantal expression \eqref{example-ineq} in part $(b).$
\end{example}

\begin{remark}[Hierarchy among inequalities]\label{FGJ-Skan-comp-remark}
Recall Remarks~\ref{Hierarchy-rem-0} and \ref{Hierarchy-remark} that the inequalities in Theorems~\ref{Add-thm-1} and \ref{Add-thm-3} contain a certain hierarchy, and each of these inequalities are derivative of the complimentary ones via index-row/column operations. Thus, we naturally follow up from the smallest multiplicative inequalities, and find that all these too are derivative of the complementary smallest multiplicative inequalities. This entails proving the rather challenging ``converse'' that all of the smallest multiplicative inequalities are derivable via applying sequential index-row/column operations on complementary smallest multiplicative inequalities.

\end{remark}

\begin{utheorem}\label{Multi-thm-3}
The following is equivalent to (1), (2), and (3) in Theorem~\ref{FGJ-Skan-thm}:
\begin{itemize}
\item[$(6)$] There exist $P,Q,I,J \subseteq [n]$ with $|P|=|Q|=|P_1|$ and $|I|=|J|=|I_1|$ such that, for all $n\times n$ totally nonnegative matrices $A,$
\begin{align}\label{Multi-thm-3-ineq-2}
\det A\big{(}P\big{|}Q\big{)}\det A\big{(}P^{\mathsf{c}}\big{|}Q^\mathsf{c}\big{)} \leq \det A\big{(}I\big{|}J\big{)}\det A\big{(}I^{\mathsf{c}}\big{|}J^{\mathsf{c}}\big{)}.
\end{align}
Moreover, there exist integers $k_r,k_c \geq 1,$ and integer pairs $(u_i,v_i)$ and $(u_j',v_j')$ in $[n],$ for $i \in [1,k_r]$ and $j\in [1,k_c],$ where each $u_i\in\{v_i-1,v_i,v_i+1\}$ and each $u_j'\in\{v_j'-1,v_j',v_j'+1\},$ such that the composite operation,
\begin{align*}
\Ro_{(u_{k_r},v_{k_r})} \circ \cdots \circ \Ro_{(u_1,v_1)}\circ
\Co_{(u_{k_{c}}',v_{k_{c}}')} \circ \cdots \circ \Co_{(u_1',v_1')}
\end{align*}
applied on inequality \eqref{Multi-thm-3-ineq-2} results in inequality \eqref{smallest-multi} in Theorem~\ref{FGJ-Skan-thm}.
\end{itemize}
\end{utheorem}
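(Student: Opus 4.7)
The proof splits into two directions. The direction (6) $\Rightarrow$ (1) follows directly from Lemma~\ref{Main-lemma-1} together with its column analogue (Remark~\ref{Column-op}): every row operation $\Ro_{(u,v)}$ and every column operation $\Co_{(u,v)}$ transforms a valid inequality over all totally nonnegative matrices into another valid inequality, and the same then holds for any composition. Hence applying the composite $\Ro_{(u_{k_r},v_{k_r})} \circ \cdots \circ \Ro_{(u_1,v_1)} \circ \Co_{(u'_{k_c},v'_{k_c})} \circ \cdots \circ \Co_{(u'_1,v'_1)}$ to the valid complementary inequality~\eqref{Multi-thm-3-ineq-2} produces a valid inequality which, by the hypothesis of (6), is exactly \eqref{smallest-multi}; this is precisely (1).

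For the substantive direction (1) $\Rightarrow$ (6), my plan is to construct the complementary companion explicitly. Starting from a valid \eqref{smallest-multi}, I will pass to the $2n \times 2n$ principal-minor formulation~\eqref{FGJ-condition} of Theorem~\ref{FGJ-Skan-thm}(2), obtaining sets $R_1, R_2, K_1, K_2 \subseteq [2n]$ satisfying $R_1 \cup R_2 = K_1 \cup K_2$ as multisets together with Skandera's condition~\eqref{Skandera-condition}. A complementary companion on the $A$-side corresponds to a pair $\widetilde R, \widetilde K \subseteq [2n]$ with $\widetilde R \cap \widetilde R^{\mathsf{c}} = \emptyset$ and $\widetilde K \cap \widetilde K^{\mathsf{c}} = \emptyset$, which I construct by an \emph{uncolliding} procedure: each doubled index $i \in R_1 \cap R_2$ has one of its two copies moved along a chain of adjacent swaps to a neighboring zero $j \notin R_1 \cup R_2$, and the same is done for $K$. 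The induced $P, Q, I, J \subseteq [n]$ are read off by inverting the bijection $(R_1, R_2) \leftrightarrow (P_1, P_2, Q_1, Q_2)$. The sequence of adjacent swaps used to uncollide, applied in \emph{reverse}, furnishes the required $\Ro_{(\cdot,\cdot)}$ component corresponding to the $[1,n]$ part of the $B$-indices, and the analogous sequence on the $[n+1,2n]$ part translates into the $\Co_{(\cdot,\cdot)}$ component via the natural $u \leftrightarrow 2n+1-u$ identification of column indices with second-half row indices.

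Two properties must be verified simultaneously: (i) the complementary companion is itself a valid inequality for $n \times n$ totally nonnegative matrices; and (ii) the adjacent-swap sequence is \emph{balanced}, in the sense that the multiplicity conditions $\ml_P(u,v) = \ml_I(u,v)$ and $\ml_Q(u,v) = \ml_J(u,v)$ hold at every intermediate stage, so Lemma~\ref{Main-lemma-1} applies without any term being killed en route. For (ii) I will order the swaps so that they proceed from the boundary of each uncolliding chain inward, which forces the swap at every stage to act symmetrically on the lower and higher sides. The main obstacle is (i): I must show that Skandera's condition for $\widetilde R, \widetilde K$ is itself a consequence of Skandera's condition for the given $R_1, R_2, K_1, K_2$. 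Tracing the effect of a single uncolliding swap on~\eqref{Skandera-condition}, the quantity $\max(|S \cap R_1|, |S \cap R_2|)$ on an even contiguous $S$ intersecting the moved chain can only increase or stay the same, while any concurrent change on the right-hand side is controlled by the preserved multiset equality; iterating over all uncolliding chains then transfers validity to the companion. The truly delicate point is to choose the \emph{leftmost} (greedy) uncolliding so that distinct uncolliding chains do not interfere and the construction is uniform across every valid input \eqref{smallest-multi}; once this is done, the reversed sequence of swaps furnishes the composite operation required by (6), completing the implication.
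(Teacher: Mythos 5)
Your high-level plan matches the paper's: the easy direction via Lemma~\ref{Main-lemma-1}, and the substantive direction by resolving the doubled/zero structure step by step and recording the operations in reverse. But two of the details in your sketch of $(1)\Rightarrow(6)$ are genuine wrong steps, not just omissions.

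First, the monotonicity claim -- that under an uncolliding swap the quantity $\max(|S\cap R_1|,|S\cap R_2|)$ on an even contiguous $S$ ``can only increase or stay the same'' -- is false, and the intended validity argument collapses with it. Take $n=4$, $R_1=\{1,3\}$, $R_2=\{2,3\}$, and uncollide by moving $3\in R_2$ to $4$, getting $\widetilde R_2=\{2,4\}$. Then $M=\{1,2\}$ becomes $\widetilde M=\{1,2,3,4\}$, so the family of admissible $S$ changes, and for $S=\{2,3\}$ (even contiguous in $\widetilde M$) the left-hand max drops from $2$ to $1$. The paper therefore does not argue monotonicity at all; instead it normalizes first so that the doubled index sits at position $1$ and the empty index at position $2$, and then shows by contradiction (step 5 of its algorithm) that if some $\overline S$ violates condition~\eqref{Skandera-condition} after the resolution then $\overline S\setminus\{1,2\}$ or $\overline S\setminus\{2,3\}$ already violated it before. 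That argument is only clean because the resolution takes place at a canonical, known location.

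Second, ``moving one of its two copies along a chain of adjacent swaps to a neighboring zero'' is not, as written, a sequence of $\Ro/\Co$ operations, and neither is its reverse. With $R_1=\{1,2\}$, $R_2=\{1,3\}$, $n=4$, the doubled index is $1$, the zero is $4$, and $3\in R_2$ blocks the chain: the reversed first swap $\Ro_{(4,3)}$ applied to the uncoupled $\widetilde R_2=\{3,4\}$ does nothing because $3\in\widetilde R_2$. The paper's normalization $\Ro_{N_1},\Ro_{N_2}$ is exactly what clears such blockers out of the way before the resolution, and the recorded composition $\Ro_{N_1}^{-1}\circ\Ro_{N_2}^{-1}\circ\Ro_{(2,1)}$ is what actually reconstructs the original sets from the complementary ones as row operations. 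Your ``greedy, boundary-inward'' ordering neither performs this clearing nor furnishes the balance condition $\ml_X(u,v)=\ml_Y(u,v)$; the balance actually follows from Lemma~\ref{FGJ-Multi-lemma} and is maintained precisely because each $\Ro_{N_i}$ is applied simultaneously to the $X$-side and $Y$-side. Working in the $2n\times 2n$ framework for the whole construction (rather than only to invoke Skandera's condition, as the paper does) is also riskier than necessary: the paper sidesteps the $n/n{+}1$-boundary issue entirely by uncolliding $(P_1,P_2)$ and $(Q_1,Q_2)$ separately, which makes the claimed $\Ro/\Co$ factorization automatic.
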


\begin{remark}[Significance of Theorem~\ref{Multi-thm-3}]\label{Multi-thm-3-significance}
For any given smallest multiplicative inequality \eqref{smallest-multi} in Theorem~\ref{FGJ-Skan-thm}, earlier we knew only of the inequality in \eqref{FGJ-condition} that would exist as an equivalent presence. Now, because of the index-row/column operations, we have a much bigger class of smallest multiplicative inequalities that exists as well. In fact, Theorem~\ref{Multi-thm-3} shows that this existence is equivalent to the existence of the given inequality in the first place. This is ensured because Theorem~\ref{Multi-thm-3} establishes that in order to hold on to all of the smallest multiplicative inequalities, we are only required to identify the higher/complementary ones. This also provides another description of the smallest multiplicative inequalities, which augments Theorems~\ref{FGJ-Skan-thm}, \ref{Multi-thm-1}, and \ref{Multi-thm-2}, and thus would have its own applicability in the theory of multiplicative inequalities over totally nonnegative matrices.
\end{remark}

\section{Multiplicative inequalities: proofs}\label{Multi-proofs}


The proof of Theorem~\ref{Multi-thm-1} requires the following:

\begin{lemma}\label{FGJ-Skan-review-lemma} Let $n\geq 1$ be an integers, and $R_1,$ $R_2,$ $K_1,$ and $K_2$ be nonempty subsets of $[2n].$ Define $M=(R_1\setminus R_2)\cup (R_2\setminus R_1),$ and suppose multisets $R_1\cup R_2$ and $K_1\cup K_2$ are equal, and for all even contiguous subsets $S\subseteq M,$
\begin{align}\label{FGJ-Skan-review-lemma-condition}
\max\{|S\cap R_1|,|S \cap R_2|\}\geq \max\{|S\cap K_1|,|S\cap K_2|\}.
\end{align}
Then for all consecutive $u,v\in [n],$
\begin{align*}
\So_{(u,v)}\big{(}(K_1,K_2)\big{)} \neq \big{(}(K_1,K_2)\big{)} \mbox{ whenever } ~\So_{(u,v)}\big{(}(R_1,R_2)\big{)}\neq \big{(}(R_1,R_2)\big{)}.
\end{align*}
\end{lemma}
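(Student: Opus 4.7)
The plan is to prove this by a direct case analysis on the multiplicities of $u$ and $v$ in the common multiset $R_1 \cup R_2 = K_1 \cup K_2$, with the max hypothesis \eqref{FGJ-Skan-review-lemma-condition} invoked exactly once, at the last stage. First I would fix a consecutive pair $u,v$, and, under the assumption $\So_{(u,v)}\bigl((R_1, R_2)\bigr) \neq \bigl((R_1, R_2)\bigr)$, pick $i \in \{1,2\}$ with $u \in R_i$ and $v \notin R_i$. The goal then becomes producing $j \in \{1,2\}$ with $u \in K_j$ and $v \notin K_j$. Writing $m(x)$ for the multiplicity of $x$ in this common multiset, I would immediately note $m(u) \geq 1$ and $m(v) \leq 1$ (otherwise $v \in R_1 \cap R_2 \subseteq R_i$, a contradiction).

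Next I would dispatch the cases in which the $K$-distribution is already forced by multiplicities. If $m(u) = 2$, then $u \in K_1 \cap K_2$, and some $K_j$ must omit $v$ since $m(v) \leq 1$, producing the swap. If $m(u) = 1$ and $m(v) = 0$, the unique $K_j$ containing $u$ automatically omits $v$. The only delicate remaining case is $m(u) = m(v) = 1$, in which $u, v$ both lie in $M$; here equality of the underlying multisets also guarantees $M = (R_1 \setminus R_2) \cup (R_2 \setminus R_1) = (K_1 \setminus K_2) \cup (K_2 \setminus K_1)$. In this case either $u, v$ sit in different $K$-sets (giving the swap directly), or they both lie in a single $K_j$, which is the configuration I must eliminate.

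For the elimination I would argue by contradiction, assuming $u, v \in K_j$ for one $j$, and apply the max hypothesis to $S := \{u, v\}$. This $S$ is a legal test set: it lies in $M$, is even (of cardinality $2$), and is contiguous in $M$ because $u, v$ are consecutive \emph{integers} and so no integer, hence no element of $M$, lies strictly between them. On the $R$-side, $|S \cap R_i| = 1$ from $u \in R_i$, $v \notin R_i$, and $|S \cap R_1| + |S \cap R_2| = m(u) + m(v) = 2$ forces $|S \cap R_{3-i}| = 1$, so $\max\{|S \cap R_1|, |S \cap R_2|\} = 1$. On the $K$-side, $u, v \in K_j$ gives $|S \cap K_j| = 2$, so $\max\{|S \cap K_1|, |S \cap K_2|\} \geq 2$, in direct violation of \eqref{FGJ-Skan-review-lemma-condition}.

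The main (and really rather mild) obstacle I foresee is the careful bookkeeping needed to certify that every other multiplicity scenario truly collapses into one of the easy sub-cases, so that $m(u) = m(v) = 1$ is the unique configuration requiring the hypothesis. The conceptual point is that the max hypothesis, evaluated on the single two-element set $\{u, v\}$, is calibrated precisely to detect whether $u$ and $v$ have been reassigned across $K_1, K_2$ differently from their assignment across $R_1, R_2$—exactly the degree of freedom the swap operation needs in order to act nontrivially on one pair but not the other.
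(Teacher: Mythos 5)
Your proof is correct and uses the paper's own key idea: dispose of the easy multiplicity patterns via the multiset equality $R_1\cup R_2 = K_1\cup K_2$, isolate $m(u)=m(v)=1$ with $u,v$ both landing in the same $K_j$ as the only delicate configuration, and kill that by applying \eqref{FGJ-Skan-review-lemma-condition} to the even contiguous set $S=\{u,v\}$. Your case split by multiplicities is a bit cleaner than the paper's split by membership pattern (and explicitly covers the sub-case $m(u)=2,\,m(v)=1$, which the paper's two displayed cases technically omit but which is trivially resolved by multiset equality), yet the route is essentially the same.
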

\begin{proof}
Let $u,v\in [n]$ be consecutive, and suppose 
\begin{align*}
\big{(}(K_1',K_2')\big{)}:=\So_{(u,v)}\big{(}(K_1,K_2)\big{)}\mbox{ and }\big{(}(R_1',R_2')\big{)}:=\So_{(u,v)}\big{(}(R_1,R_2)\big{)}.
\end{align*}
The given multiplicity conditions imply, without loss of any generality, that for $\big{(}(R_1',R_2')\big{)}\neq \big{(}(R_1,R_2)\big{)}$ we must have one of the following two cases:
\begin{align*}
\mbox{either }u\in R_1\cap R_2 \mbox{ and } v\not\in R_1\cup R_2,~~
\mbox{or}~~u\in R_1\setminus R_2 \mbox{ and }v\not\in R_1.
\end{align*}
Now, for a contradiction, suppose $\big{(}(K_1',K_2')\big{)}=\big{(}(K_1,K_2)\big{)}.$ If $u \in R_1 \cap R_2$ and $v\not\in R_1\cup R_2$ then, either $u \not \in K_1 \cup K_2$ or $\{u,v\} \subseteq K_1 \cap K_2.$ This contradicts the given multiplicity condition. If $u\in R_1\setminus R_2$ and $v\not\in R_1$, then, without loss of generality, $\{u,v\}\subseteq (K_1 \setminus K_2)\setminus( K_2 \setminus K_1).$ This means $S=\{u,v\}$ violates \eqref{FGJ-Skan-review-lemma-condition}, which completes the proof.
\end{proof}

Now we prove Theorem~\ref{Multi-thm-1}:

\begin{proof}[Proof of Theorem~\ref{Multi-thm-1}] We prove the equivalence of $(3)$ in Theorem~\ref{FGJ-Skan-thm} and $(4)$ in Theorem~\ref{Multi-thm-1}. Note that $(3)\implies (4)$ follows from Theorem~\ref{FGJ-Skan-thm}, and Lemmas~\ref{FGJ-Skan-review-lemma} and \ref{Main-lemma-1}. We prove $(4)\implies (3):$ suppose $(3)$ does not hold. Then there exists an even contiguous nonempty subset $S\subseteq M$ such that
\begin{align}\label{let1}
\max\{|S\cap R_1|,|S \cap R_2|\} < \max\{|S\cap K_1|,|S\cap K_2|\}.
\end{align}
Without loss of generality, we suppose $S$ is minimal, i.e., for each proper even contiguous subsets $D\subset S \subseteq M$
\begin{align}\label{let2}
\max\{|D\cap R_1|,|D \cap R_2|\} \geq \max\{|D\cap K_1|,|D\cap K_2|\}.
\end{align}
We claim that for $D_S:=S\setminus\{\min S, \max S\},$
\begin{align}\label{claim1}
|D_S\cap R_1|=|D_S \cap R_2|=|D_S\cap K_1|=|D_S\cap K_2|.
\end{align}
If $D_S$ is empty then claim~\eqref{claim1} is obviously true. So suppose $D_S=\{d_1 <\cdots < d_{2l}\},$ for some integer $l\geq 1.$

\begin{uclaim}\label{claim-A}
$D_S$ is nonempty if and only if $D_S \cap R_1,$ $D_S\cap R_2,$ $D_S\cap K_1,$ and $D_S\cap K_2$ are all nonempty.
\end{uclaim}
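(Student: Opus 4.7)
The \emph{if} direction is immediate: each of $D_S\cap R_1, D_S\cap R_2, D_S\cap K_1, D_S\cap K_2$ nonempty clearly forces $D_S$ to be nonempty. The content lies in the \emph{only if} direction, which I plan to obtain by contradiction using the minimality of $S$ applied to several even contiguous subsets.

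Two setup observations drive the argument. First, since $D_S\subseteq M = (R_1\setminus R_2)\cup(R_2\setminus R_1)$ and the multisets $R_1\cup R_2$ and $K_1\cup K_2$ coincide, every element of $M$ has multiplicity one in $K_1\cup K_2$; hence each $x\in D_S$ lies in exactly one of $R_1, R_2$ and in exactly one of $K_1, K_2$, giving the twin decompositions $|D_S| = |D_S\cap R_1|+|D_S\cap R_2| = |D_S\cap K_1|+|D_S\cap K_2|$. Second, since $S=\{s_1<\cdots<s_{2k}\}$ is contiguous in $M$, the sets $D_S = S\setminus\{s_1,s_{2k}\}$, $\{s_1, s_2\}$, and $\{s_{2k-1}, s_{2k}\}$ are all proper even contiguous subsets of $S$ (and of $M$); so the minimality of $S$ supplies, for each such $D$, the inequality $\max(|D\cap R_1|,|D\cap R_2|)\ge \max(|D\cap K_1|,|D\cap K_2|)$.

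Assume for contradiction that one of the four intersections is empty; by the $R_1\leftrightarrow R_2$ and $K_1\leftrightarrow K_2$ symmetries I may reduce to the case $D_S\cap K_1 = \emptyset$, so $D_S\subseteq K_2$. Minimality on $D_S$ combined with $|D_S\cap K_2|=|D_S|$ forces $\max(|D_S\cap R_1|, |D_S\cap R_2|)\ge |D_S|$, hence WLOG $D_S\subseteq R_1$. Writing $\alpha = |\{s_1,s_{2k}\}\cap R_1|$ and $\delta = |\{s_1,s_{2k}\}\cap K_2|$, the $S$-counts become $|S\cap R_1| = |D_S|+\alpha$, $|S\cap R_2| = 2-\alpha$, $|S\cap K_2| = |D_S|+\delta$, $|S\cap K_1| = 2-\delta$, and (using $|D_S|\ge 2$) the strict inequality hypothesis on $S$ simplifies to $\alpha<\delta$. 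This restricts $(\alpha,\delta)$ to $\{(0,1),(0,2),(1,2)\}$.

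The decisive step — and the part requiring the most care — is then to invoke minimality on the pairs $\{s_1, s_2\}$ and $\{s_{2k-1}, s_{2k}\}$. Because $s_2, s_{2k-1}\in D_S\subseteq R_1\cap K_2$, each such pair already contains one element of $R_1\cap K_2$, and a short case check shows minimality forces: if an endpoint $s_1$ (respectively $s_{2k}$) lies in $R_2$, then it must lie in $K_1$. This rules out every surviving case: in $(\alpha,\delta)=(0,1)$ and $(0,2)$ both endpoints are in $R_2$ hence both in $K_1$, forcing $\delta=0$; in $(\alpha,\delta)=(1,2)$ the endpoint in $R_2$ is forced into $K_1$, so $\delta\le 1$. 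Each contradicts $\alpha<\delta$. The remaining emptyness cases $D_S\cap K_2=\emptyset$, $D_S\cap R_1=\emptyset$, $D_S\cap R_2=\emptyset$ are handled by the symmetries noted at the start, completing the proof.
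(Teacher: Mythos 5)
Your argument for the case $D_S\cap K_1 = \emptyset$ is correct, and the endpoint analysis via the pairs $\{s_1,s_2\}$ and $\{s_{2k-1},s_{2k}\}$ is exactly the kind of reasoning the paper uses (its cases $(a)$ and $(b)$). The $\alpha,\delta$ parametrization is a clean way of organizing that subcase. However, the final sentence — dispatching the ``remaining emptiness cases'' including $D_S\cap R_1 = \emptyset$ and $D_S\cap R_2 = \emptyset$ ``by the symmetries noted at the start'' — contains a genuine gap. The hypotheses you are working under, namely the minimality conditions $\max\{|S\cap R_1|,|S\cap R_2|\} < \max\{|S\cap K_1|,|S\cap K_2|\}$ and $\max\{|D\cap R_1|,|D\cap R_2|\}\geq \max\{|D\cap K_1|,|D\cap K_2|\}$ for proper even contiguous $D\subset S$, are asymmetric: they are invariant under $R_1\leftrightarrow R_2$ and under $K_1\leftrightarrow K_2$, but not under swapping the $R$'s with the $K$'s, because the two inequalities point in opposite directions. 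So an empty $R$-intersection cannot be relabelled into an empty $K$-intersection.

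Concretely, here is where your mechanism breaks. When $D_S\cap K_1 = \emptyset$, minimality applied to $D_S$ gives $\max\{|D_S\cap R_1|,|D_S\cap R_2|\}\geq |D_S\cap K_2| = |D_S|$, which forces $D_S$ into a single $R_i$, and from there your $\alpha,\delta$ bookkeeping applies. But when instead $D_S\cap R_1 = \emptyset$, so $D_S\subseteq R_2$, the same minimality inequality only yields $|D_S| = \max\{|D_S\cap R_1|,|D_S\cap R_2|\}\geq \max\{|D_S\cap K_1|,|D_S\cap K_2|\}$, which is automatically satisfied and does \emph{not} force $D_S$ into a single $K_j$; both $D_S\cap K_1$ and $D_S\cap K_2$ may be nonempty. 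This is precisely the configuration the paper spends most of its effort on (the subcase $\max\{|D_S\cap K_1|,|D_S\cap K_2|\}\leq 2l-1$): there one deduces from the strict $S$-inequality that both endpoints lie in $R_1$ and both in the same $K_{i_0}$ with $|D_S\cap K_{i_0}| = 2l-1$, and then the pairs $\{s_1,s_2\}$, $\{s_{2k-1},s_{2k}\}$ still yield a minimality violation because the single element of $D_S$ in the other $K$ can kill at most one of the two pairs. This extra case is salvageable by an argument in the same spirit as yours, but it is not a relabelling of the one you wrote, so the proof as stated is incomplete.
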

\begin{proof}
Suppose $D_S$ is nonempty. Then without loss of generality, let $D_S \cap R_1$ and $D_S\cap K_1$ be nonempty. We will show that $D_S \cap R_2$ and $D_S\cap K_2$ are nonempty:

If $D_S\cap R_2=\emptyset,$ then $\max\{|D_S\cap R_1|,|D_S\cap R_2|\}=2l.$ If $\max\{|D_S\cap K_1|,|D_S\cap K_2|\}\leq 2l-1,$ then there exist contiguous $k_1,k_2\in D_S$ such that $k_1\in K_1 \setminus K_2$ and $k_2\in K_2\setminus K_1.$ Since $\max\{|D_S\cap R_1|,|D_S\cap R_2|\}=2l,$ $\max\{|S\cap R_1|,|S\cap R_2|\}\geq 2l.$ On the other hand, since $\max\{|D_S\cap K_1|,|D_S\cap K_2|\}\leq 2l-1,$ $\max\{|S\cap K_1|,|S\cap K_2|\}\leq 2l+1.$ Hence 
\begin{align*}
2l=\max\{|D_S\cap R_1|,|D_S\cap R_2|\} & > \max\{|D_S\cap K_1|,|D_S\cap K_2|\} = 2l-1, \mbox{ and} \\
2l=\max\{|S\cap R_1|,|S \cap R_2|\} &< \max\{|S\cap K_1|,|S\cap K_2|\}=2l+1.
\end{align*}
Since $D_S \neq \emptyset,$ either $S_1:=\{\min S, \min D_S\}$ or $S_2:=\{\max S, \max D_S\}$ which violates the minimality of $S$ in \eqref{let1} and \eqref{let2}. Therefore $\max\{|D_S\cap K_1|,|D_S\cap K_2|\} = 2l.$ Thus $|D_S\cap K_1|=2l.$ The following cases are possible:
\begin{itemize}
\item[$(a)$] $\{\min S, \max S\}\subset K_1:$ Because of \eqref{let1} and \eqref{let2}, we must have either $\min S \not\in R_1$ or $\max S \not \in R_1.$ In that case, either $S_1 = \{\min S, \min D_S\}$ or $S_2:=\{\max S, \max D_{S}\}$ which violates the minimality of $S$ in \eqref{let1} and \eqref{let2}.
\item[$(b)$] Either $\min S \in K_1$ and $\max S \in K_2,$ or vice versa: if $\min S \in K_1$ and $\max S \in K_2,$ then $\min S \in R_1,$ because otherwise the minimality of $S$ is violated as in the previous case. This ensures that \eqref{let1} does not hold for $S,$ which is a contradiction. Similarly we can rule out the other case when $\min S \in K_2$ and $\max S \in K_1.$
\end{itemize}
Thus $D_S\cap R_2$ is nonempty. Finally, because of this and \eqref{let2}, $D_S \cap K_2$ is also nonempty.

The converse is obvious.
\end{proof} 

\begin{uclaim}\label{claim-B}
Suppose $u,v \in [\min D_S,\max D_S]$ are consecutive, and define
\begin{align*}
\big{(}(R_1',R_2'),(K_1',K_2')\big{)}:=\So_{(u,v)}\big{(}(R_1,R_2),(K_1,K_2)\big{)},~~M':= (R_1'\setminus R_2')\cup (R_2'\setminus R_1'),
\end{align*}
\begin{align*}
S'&:=
\begin{cases}
S \setminus \{u,v\} & \mbox{if } u \in R_1 \setminus R_2 \mbox{ and } v \in R_2 \setminus R_1 \mbox{ or vice versa,}\\
S(v,u) & \mbox{if } u \in R_1 \cap R_2, \mbox{ and } v \in R_2 \setminus R_1 \mbox{ or }v \in R_1 \setminus R_2, \mbox{ and} \\
S & \mbox{otherwise,}
\end{cases} \\
D_{S'}&:=S'\setminus \{\min S',\max S'\}.
\end{align*}
If $\big{(}(R_1',R_2')\big{)}\neq \big{(}(R_1,R_2)\big{)},$ then the multisets $R_1'\cup R_2'$ and $K_1'\cup K_2'$ are equal, and the  conditions in $(4)$ in Theorem~\ref{Multi-thm-1} hold for $R_1',R_2',K_1',K_2'.$ Moreover, $S'\subseteq M'$ is minimal for $R_1',R_2',K_1',K_2',$ i.e.,
\begin{align*}
\max\{|S'\cap R_1'|,|S' \cap R_2'|\} < \max\{|S'\cap K_1'|,|S'\cap K_2'|\},
\end{align*}
and for each proper even contiguous subsets $D'\subset S'$
\begin{align*}
\max\{|D'\cap R_1'|,|D' \cap R_2'|\} \geq \max\{|D'\cap K_1'|,|D'\cap K_2'|\}.
\end{align*}
In addition to all this, Claim~\ref{claim-A} holds for $D_{S'}$ and $R_1',R_2',K_1',K_2',$ i.e., $D_{S'}:= S'\setminus\{\min S',\max S'\}$ is nonempty if and only if $D_{S'} \cap R_1',$ $D_{S'}\cap R_2',$ $D_{S'}\cap K_1',$ and $D_{S'}\cap K_2'$ are all nonempty.
\end{uclaim}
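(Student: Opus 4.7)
The plan is to carry out a case analysis on the relative position of $u$ and $v$ with respect to $(R_1, R_2)$, and to verify the four assertions of Claim~B configuration by configuration. Under the hypothesis $\So_{(u,v)}(R_1, R_2) \neq (R_1, R_2)$ the multiplicity constraints force one of three non-trivial configurations: (i) $u$ and $v$ both lie in $R_1 \triangle R_2$ on opposite sides of the symmetric difference, (ii) $u \in R_1 \cap R_2$ and $v \in R_1 \triangle R_2$, or (iii) an ``outer'' configuration where $v \notin R_1 \cup R_2$ together with $u$ in at least one $R_i$. These are exactly the three branches in the piecewise definition of $S'$.

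For the multiset equality $R_1' \cup R_2' = K_1' \cup K_2'$, observe that the initial multiset equality together with Lemma~\ref{FGJ-Multi-lemma} forces $\ml_R(w) = \ml_K(w)$ at every $w \in [2n]$, in particular at $u$ and $v$. Condition~$(4)$ guarantees that $\So_{(u,v)}$ also changes $(K_1, K_2)$, so a direct configuration-by-configuration check shows that the same number of set-memberships migrate from $u$ to $v$ on the $K$-side as on the $R$-side. Configuration (i) is the delicate one: here condition~$(4)$ precludes the possibility that $u$ and $v$ both lie in the same $K_i$, thereby forcing a matching ``split'' on the $K$-side.

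The inheritance of condition~$(4)$ for the new quadruple is essentially formal. Given any sequence of consecutive integer pairs $(u_1', v_1'), \ldots, (u_k', v_k')$ in $[2n]$, prepend $(u,v)$ to obtain the longer sequence $(u, v), (u_1', v_1'), \ldots, (u_k', v_k')$ and apply condition~$(4)$ in its full generality to $(R_1, R_2), (K_1, K_2)$. Comparing successive partial compositions starting from index $j = 2$ translates verbatim into the corresponding statement about partial compositions starting from $(R_1', R_2')$ and $(K_1', K_2')$.

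The technical heart of the argument, and the main anticipated obstacle, is the minimality of $S'$, namely that $S'$ witnesses the strict inequality while every proper even contiguous $D' \subset S'$ satisfies the reverse one. Here one uses crucially that $u, v \in [\min D_S, \max D_S]$, so any element whose $M$-membership changes under $\So_{(u,v)}$ lies in the interior of the contiguous range spanned by $S$ and thus does not disturb its endpoints. In case (i) this should yield $|S' \cap R_i'| = |S \cap R_i| - 1$ and $|S' \cap K_i'| = |S \cap K_i| - 1$; in case (ii) a bijective correspondence transports the counts unchanged, up to the swap encoded by $S(v,u)$; in case (iii) the sets $R_i', K_i', S'$ agree with $R_i, K_i, S$ on $S$. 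Each proper even contiguous $D' \subset S'$ arises as the image of a unique proper even contiguous $D \subset S$ under the same transformation rule, and so the inherited counts together with the minimality of $S$ yield the non-strict inequality for $D'$. Once minimality is established, the analog of Claim~A for $D_{S'}$ is an immediate re-application of Claim~A to the quadruple $R_1', R_2', K_1', K_2'$, whose hypotheses we have just verified.
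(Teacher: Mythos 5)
Your approach---case analysis on the position of $u,v$ relative to $(R_1,R_2)$, cardinality tracking for minimality, and a prefix argument for inheriting condition $(4)$---is the natural way to flesh out the paper's own proof, which merely asserts that non-minimality of $S'$ would force non-minimality of $S$ and invokes $(4)$ for the rest. The prefix argument for $(4)$, the multiplicity argument giving $\ml_R(u,v)=\ml_K(u,v)$ for the multiset equality, and your treatment of cases (i) and (ii) are all sound.

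The gap is in case (iii). You lump together all configurations with $v\notin R_1\cup R_2$, but there are two sub-cases with genuinely different behaviour, and your assertion that ``the sets $R_i',K_i',S'$ agree with $R_i,K_i,S$ on $S$'' fails in one of them. If $u\in R_1\cap R_2$ and $v\notin R_1\cup R_2$, then $u,v\notin M$, $M'=M$, $S'=S$, and all relevant cardinalities are preserved---this sub-case is fine. But if $u$ lies in exactly one of $R_1,R_2$ (so $u\in M$, and then $u\in D_S$ because $S$ is contiguous in $M$ and $u\in[\min D_S,\max D_S]$) while $v\notin R_1\cup R_2$, the operation $\So_{(u,v)}$ moves the $M$-element from $u$ to $v$, giving $M'=(M\setminus\{u\})\cup\{v\}$. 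Keeping $S'=S$ then leaves $u\in S'$ with $u\notin M'$, violating the stated $S'\subseteq M'$, and one finds $|S'\cap R_1'|=|S\cap R_1|-1$ rather than $|S\cap R_1|$ when $u\in R_1\setminus R_2$. The correct choice here is $S'=(S\setminus\{u\})\cup\{v\}=S(u,v)$---a fourth branch that the claim's own piecewise definition of $S'$ also omits---after which the cardinalities transport exactly as in your case (ii). This configuration genuinely arises in the algorithmic use of Claim~\ref{claim-B} (the steps of $\So_{r,k_r+1}$ that shuttle $\max\{u_r,v_r\}$ through vacant positions), so it cannot simply be excluded.
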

\begin{proof}
It is not difficult to see that if $S'$ is not minimal for $R_1',R_2',K_1',K_2',$ then $S$ can not be minimal for $R_1,R_2,K_1,K_2,$ which would be a contradiction to \eqref{let1} and \eqref{let2}. Hence, because of $(4)$ in Theorem~\ref{Multi-thm-1}, all of the remaining conclusions in Claim~\ref{claim-B} hold. 
\end{proof} 

Now that Claims~\ref{claim-A} and \ref{claim-B} are set, we provide an algorithmic proof of \eqref{claim1}. Define:
\begin{align*}
R_1^{0}:=R_1,~R_2^0:=R_2,~K_1^0:=K_1,~K_2^0:=K_2,~S^0:=S,~D_{S^0}:=D_{S}.
\end{align*}
\begin{itemize}
\item[$(a)$] Suppose $r=s=0.$
\item[$(b)$] Suppose $r=s.$ If $D_{S^r}\neq \emptyset$, then there exists consecutive integers $u_r,v_r\in D_{S^r}$ such that $u_r \in R_1^r \setminus R_2^r$ and $v_r \in R_2^r \setminus R_1^r.$ The multiplicity conditions imply that $u^r \in K_1^r\setminus K_2^r$ and $v^r \in K_2^r\setminus K_1^r,$ or vice versa. Suppose $u_{1,r} < u_{2,r} < \cdots < u_{k_r,r} \in [\min\{u_r,v_r\} + 1,\max\{u_r,v_r\}-1]$ such that each $u_{i,r}\in R_1^r\cup R_2^r.$ Note that each $u_{i,r} \in R_1^r\cap R_2^r.$ Suppose $u_{0,r}:=\min\{u_r,v_r\}$ and $u_{{k_r,r}+1}:= \max\{u_r,v_r\},$ and define the operations:
\begin{align*}
\So_{r,j}:= \So_{(u_{0,r}+{j},u_{0,r}+{j-1})}\circ \cdots \circ \So_{(u_{j,r}-1,u_{j,r}-2)} \circ \So_{(u_{j,r},u_{j,r}-1)}, \mbox{ for all }j\in [1,k_r+1].
\end{align*}
Now define
\begin{align*}
\Big{(}(R_1^{r+1},R_2^{r+1}),(K_1^{r+1},K_2^{r+1})\Big{)}&:=\So_{r}\Big{(}(R_1^r,R_2^r),(K_1^r,K_2^r)\Big{)}, \\
\mbox{where, }~~ \So_{r}&:= \So_{r,k_r+1} \circ  \cdots  \circ \So_{r,1}.
\end{align*}
Note that, each $\So_{(*,*)}$ above is chosen in such a way that it changes the values of the sets it operates on. Therefore, Claims~\ref{claim-A} and \ref{claim-B} ensure that $S^{r+1},$ which can be obtained from recursive implementation of Claim~\ref{claim-B} for $\So_r$ acting on $(R_1^r,R_2^r),(K_1^r,K_2^r),$ is minimal for $R_1^{r+1},R_2^{r+1},K_1^{r+1},K_2^{r+1}.$ Define $D_{S^{r+1}}:=S^{r+1}\setminus\{\min S^{r+1}, \max S^{r+1}\},$ and note that $|D_{S^{r+1}}|=|D_{S^{r}}|-2.$
\item[$(c)$] If $D_{S^{r+1}} \neq \emptyset,$ then update the counter $s=r+1$ and go to step (b). Otherwise, break.
\end{itemize}

The algorithm above breaks after $l$ iterations, where $2l=|D_{S^0}|.$ Note that at the end of $l^{\mbox{th}}$ iteration, $D_{S^l} =\emptyset.$ Because of $(4)$ in Theorem~\ref{Multi-thm-1} and Claims~\ref{claim-A} and \ref{claim-B}, and the minimality of $S,$ this is possible only if \eqref{claim1} holds. This, further, implies that in order for \eqref{let1} to hold, we must have exactly one of $\max S$ and $\min S$ in exactly one of $R_1$ and $R_2,$ and either $\{\min S, \max S \} \subseteq K_1$ and  $\{\min S, \max S \} \cap K_2 = \emptyset$ or vice versa.

Now suppose $u_{1,l+1} < \cdots < u_{k_{l+1},l+1}$ are all integers in $[\min S+1,\max S-1]$ such that each $u_{i,l+1} \in R_1^l\cup R_2^l$ (which is equal to $R_1^l\cap R_2^l).$ Define $u_{0,l+1}:=\min S$ and $u_{k_{l+1}+1,l+1}:=\max S,$ and
\begin{align*}
\So_{l+1,j}:= \So_{(u_{0,l+1}+{j},u_{0,l+1}+{j-1})}\circ \cdots \circ \So_{(u_{j,l+1}-1,u_{j,l+1}-2)} \circ \So_{(u_{j,l+1},u_{j,l+1}-1)}, 
\end{align*}
$\mbox{ for all }j\in [1,k_{l+1}+1],$ and define,
$
\So_{l+1}:= \So_{l+1,k_{l+1}+1} \circ  \cdots  \circ \So_{l+1,1}.
$

Note that there exists an integer $k\geq 1$ and consecutive integer pairs $(u_1,v_1),\ldots,$ $(u_k,v_k),$ such that
$$
\So_{(u_k,v_k)}\circ \cdots \circ \So_{(u_1,v_1)}=\So_{l+1}\circ \So_l \circ \cdots \circ \So_2 \circ \So_1.
$$ 
Observe that 
\begin{align*}
\So_{(U_j,V_j)} \big{(}(R_1,R_2)\big{)} &\neq \So_{(U_{j-1},V_{j-1})} \big{(}(R_1,R_2)\big{)}, \mbox{ for all }j\in [1,k],
\end{align*}
where, $\So_{(U_{0},V_{0})}:=\So_{(1,1)}$ and $\So_{(U_{j},V_{j})}:=\So_{(u_{j},v_{j})}\circ\So_{(U_{j-1},V_{j-1})}$ for all $j\in [1,k].$
However, the conclusion in paragraph just below the algorithm implies
\begin{align*}
\So_{(U_k,V_k)} \big{(}(K_1,K_2)\big{)} = \So_{(U_{k-1},V_{k-1})} \big{(}(K_1,K_2)\big{)}.
\end{align*}
This contradicts (4) in Theorem~\ref{Multi-thm-1}. Therefore, (3) in Theorem~\ref{FGJ-Skan-thm} holds. This completes the equivalence of (3) in Theorem~\ref{FGJ-Skan-thm} and (4) in Theorem~\ref{Multi-thm-1}.
\end{proof}

We prove Theorem~\ref{Multi-thm-3} now:

\begin{proof}[Proof of Theorem~\ref{Multi-thm-3}]
We present an algorithmic proof to convert $\big{(}(P_1,P_2),(I_1,I_2)\big{)}$ and $\big{(}(Q_1,Q_2),(J_1,J_2)\big{)}$ into complementary row/column index sets $\big{(}(P,P^\mathsf{c}),(I,I^\mathsf{c})\big{)}$ and $\big{(}(Q,Q^\mathsf{c}),$ $(J,J^\mathsf{c})\big{)},$ and vice versa, respectively:
\begin{itemize}
\item[$(1)$] If $(P_1, P_2)$ and $ (I_1,I_2)$ are complementary, i.e. $P_1=P_2^{\mathsf{c}}$ and $I_1=I_2^{\mathsf{c}},$ let 
\begin{align*}
\Ro:= \Ro_{(1,1)}, \mbox{ and go to step (7).}
\end{align*}
Otherwise, let $X_1:=P_1, X_2:=P_2, Y_1:=I_1$ and $Y_2:=I_2,$ and go to step (2).

\item[$(2)$] Let $N:= |X_1\cap X_2|,$ and let $u$ be the minimum in $X_1\cup X_2$ such that $\ml_{X}(u)=2.$ Note that $N=|Y_1\cap Y_2|$ and $u$ is minimum such that $\ml_{Y}(u)=2.$ If $u=1,$ then define 
\begin{align*}
\Big{(} (X_1',X_2'),(Y_1',Y_2')\Big{)}:=\Ro_{(1,1)}\Big{(}(X_1,X_2),(Y_1,Y_2)\Big{)}, \mbox{ and }  \Ro_{N_{1}}:=\Ro_{(1,1)}. 
\end{align*}
Otherwise, define:
\begin{align*}
\Big{(}(X_1',X_2'),(Y_1',Y_2')\Big{)}&:=\Ro_{N_1} \Big{(}(X_1,X_2),(Y_1,Y_2)\Big{)},\\
\mbox{where, }~~\Ro_{N_1}&:=\Ro_{(2,1)}\circ \cdots \circ \Ro_{(u,u-1)}.
\end{align*}
Now $m_{X'}(1)=m_{Y'}(1)=2,$ and 
\begin{align*}
\det A\big{(}X_1'\big{|}Q_1\big{)}\det A\big{(}X_2'\big{|}Q_2\big{)} \leq \det A\big{(}Y_1'\big{|}J_1\big{)}\det A\big{(}Y_2'\big{|}J_2\big{)},
\end{align*}
for all $n\times n$ TN matrices $A.$
\item[$(3)$] If $\ml_{X'}(2)=\ml_{Y'}(2)=0,$ define 
\begin{align*}
\Big{(}(X_1'',X_2''),(Y_1'',Y_2'')\Big{)}:=\Ro_{(2,2)}\Big{(}(X_1',X_2'),(Y_1',Y_2')\Big{)}, \mbox{ and }\Ro_{N_{2}}:=\Ro_{(2,2)}. 
\end{align*}
Otherwise, let $a\in [n]$ be the least such that $\ml_{X'}(a)=0.$ This $a$ exists because $|X_1'\cap X_2'|=|X_1\cap X_2|\geq 1.$ Define:
\begin{align*}
\Big{(}(X_1'',X_2''),(Y_1'',Y_2'')\Big{)}&:=\Ro_{N_2} \Big{(}(X_1',X_2'),(Y_1',Y_2')\Big{)},\\
\mbox{where, }~~\Ro_{N_2}&:=\Ro_{(2,3)}\circ \cdots \circ \Ro_{(a-2,a-1)} \circ \Ro_{(a-1,a)}.
\end{align*}
Now $\ml_{X''}(2)=\ml_{Y''}(2)=0,$ $\ml_{X''}(1)=\ml_{Y''}(1)=2,$ and
\begin{align*}
\det A\big{(}X_1''\big{|}Q_1\big{)}\det A\big{(}X_2''\big{|}Q_2\big{)} \leq \det A\big{(}Y_1''\big{|}J_1\big{)}\det A\big{(}Y_2''\big{|}J_2\big{)},
\end{align*}
for all $n\times n$ TN matrices $A.$ This implies that for all even contiguous sets $S''\subseteq M''$
\begin{align}\label{prime}
\max\{|S''\cap R_1''|,|S'' \cap R_2''|\} \geq \max\{|S''\cap K_1''|,|S''\cap K_2''|\}, \mbox{ where,}
\end{align}
\begin{align*}
R_1''&:=X_1''\cup (2n+1-Q_2),~R_2'':=X_2'' \cup (2n+1-Q_1), \\
K_1''&:=Y_1''\cup (2n+1-J_2),~K_2'':=Y_2'' \cup (2n+1-J_1), \mbox{ and}\\
M''&:= (R_1''\setminus R_2'')\cup (R_2''\setminus R_1'').
\end{align*}

\item[$(4)$] If $M''=\emptyset,$ update:
\begin{align*}
\overline{X}_1=X_1'',~~ \overline{X}_2=X_2''(1,2), \mbox{ and } \overline{Y}_1=Y_1'',~~ \overline{Y}_2=Y_2''(1,2).
\end{align*}
Otherwise, if $M''\neq \emptyset,$ let $u\in M''$ be the minimum such that either $\ml_{X''}(u)=1$ or $\ml_{Q}(2n+1-u)=1.$ If $u\in (X_{i_1}'' \setminus X_{i_2}'' ) \cap (Y_{j_1}''\setminus Y_{j_2}'')$ or $u\in (2n+1-(Q_{i_1} \setminus Q_{i_2})) \cap (2n+1-(J_{j_1}\setminus J_{j_2}))$, then update: 
\begin{align*}
\overline{X}_{i_1}=X_{i_1}'',~~\overline{X}_{i_2}=X_{i_2}''(1,2),\mbox{ and } \overline{Y}_{j_1}=Y_{j_1}'',~~\overline{Y}_{j_2}=Y_{j_2}''(1,2).
\end{align*}
Define
\begin{align*}
\overline{R_1}&:=\overline{X_1}\cup (2n+1-Q_2),~\overline{R_2}:=\overline{X_2} \cup (2n+1-Q_1), \\
\overline{K_1}&:=\overline{Y_1}\cup (2n+1-J_2),~\overline{K_2}:=\overline{Y_2} \cup (2n+1-J_1), \mbox{ and}\\
\overline{M}&:= (\overline{R_1}\setminus \overline{R_2})\cup (\overline{R_2}\setminus \overline{R_1}).
\end{align*}
%
%
\item[$(5)$] Suppose $\overline{S}\subseteq \overline{M}$ is even contiguous which satisfies
$$
\max\{|\overline{S}\cap \overline{R_1}|,|\overline{S}\cap \overline{R_2}|\} < \max\{|\overline{S}\cap \overline{K_1}|,|\overline{S}\cap \overline{K_2}|\}.
$$
Then the following cases are possible:
\begin{itemize}
\item[•] If $\{1,2\}\subseteq \overline{S},$ then $S'':=\overline{S}\setminus\{1,2\}$ violates \eqref{prime}.
\item[•] Otherwise, if $\{2,3\}\subseteq \overline{S},$ then $S'':=\overline{S}\setminus \{2,3\}$ violates \eqref{prime}.
\item[•] Otherwise, $S'':=\overline{S}$ itself violates the necessary condition in \eqref{prime}.
\end{itemize}
Therefore,
\begin{align*}
\det A\big{(}\overline{X}_1\big{|}Q_1\big{)}\det A\big{(}\overline{X}_2\big{|}Q_2\big{)} \leq \det A\big{(}\overline{Y}_1\big{|}J_1\big{)}\det A\big{(}\overline{Y}_2\big{|}J_2\big{)},
\end{align*}
for all $n\times n$ TN matrices $A.$ Note that 
$
\overline{N}:=|\overline{X}_1 \cap \overline{X}_2| = |\overline{Y}_1 \cap \overline{Y}_2| = N-1.
$
This means following the steps from (2) onwards, we reduced the size of the common indices in the row index sets $X_1$ and $X_2$ and $Y_1$ and $Y_2.$ If needed review Definition~\ref{inv-op}, and define 
\begin{align*}
\Ro_{N}:=\Ro_{N_1}^{-1}\circ\Ro_{N_2}^{-1}\circ\Ro_{(2,1)}.
\end{align*}
\item[$(6)$] If $\overline{N}=0,$ i.e., $\overline{X_1}^{\mathsf{c}}=\overline{X_2}$ and $\overline{Y_1}^{\mathsf{c}}=\overline{Y_2},$ then set
\begin{align*}
P=\overline{X}_1&,~I=\overline{Y}_1,\mbox{ and }
\Ro:= \Ro_1 \circ \cdots \circ \Ro_{|P_1\cap P_2|},
\end{align*}
and go to step (7).
Otherwise, if $\overline{N}\geq 1,$ update
\begin{align*}
X_1=\overline{X}_1,~~X_2=\overline{X_2},~~
Y_1=\overline{Y}_1,~~Y_2=\overline{Y_2}, \mbox{ and go to step (2).}
\end{align*}
\item[$(7)$] Note that $\Ro \Big{(}\big{(}P,P^{\mathsf{c}}\big{)},\big{(}I,I^{\mathsf{c}}\big{)}\Big{)} = \Big{(}\big{(}P_1,P_2\big{)},\big{(}I_1,I_2\big{)}\Big{)},$ and there exist integers $k_r\geq 1,$ and integer pairs $(u_i,v_i)$ for $i \in [1,k_r]$ where each $u_i\in\{v_i-1,v_i,v_i+1\},$ such that
\begin{align*}
\Ro=\Ro_{(u_{k_r},v_{k_r})} \circ \cdots \circ \Ro_{(u_1,v_1)}.
\end{align*}
\item[$(8)$] We now have 
\begin{align*}
\det A\big{(}P\big{|}Q_1\big{)}\det A\big{(}P^{\mathsf{c}}\big{|}Q_2\big{)} \leq \det A\big{(}I\big{|}J_1\big{)}\det A\big{(}I^{\mathsf{c}}\big{|}J_2\big{)}, \mbox{ for all } n\times n \mbox{ TN }A. 
\end{align*}
\end{itemize}

Now proceed with step (1) with $(Q_1,Q_2),$ $(J_1,J_2)$ respectively taking the place of $(P_1,P_2),$ $(I_1,I_2),$ and $\big{(}P,P^{\mathsf{c}}\big{)},$ $\big{(}I,I^{\mathsf{c}}\big{)}$ taking the place of $(Q_1,Q_2),$ $(J_1,J_2).$ In the next iterations of the steps above, replace all ``$\Ro_{(*,*)}$'' with ``$\Co_{(*,*)}.$'' This provides us with a $\Co$ in step (6) such that 
$
\Co \Big{(}\big{(}Q,Q^{\mathsf{c}}\big{)},\big{(}J,J^{\mathsf{c}}\big{)}\Big{)} = \Big{(}\big{(}Q_1,Q_2\big{)},\big{(}J_1,J_2\big{)}\Big{)},
$ and for this $\Co,$ there exist integers $k_c \geq 1,$ and integer pairs $(u_j',v_j')$ for $j\in [1,k_c],$ where each $u_j'\in\{v_j'-1,v_j',v_j'+1\}$ such that,
\begin{align*}
\Co=\Co_{(u_{k_{c}}',v_{k_{c}}')} \circ \cdots \circ \Co_{(u_1',v_1')}.
\end{align*}
Hence,
\begin{align*}
\det A\big{(}P\big{|}Q\big{)}\det A\big{(}P^{\mathsf{c}}\big{|}Q^{\mathsf{c}}\big{)} \leq \det A\big{(}I\big{|}J\big{)}\det A\big{(}I^{\mathsf{c}}\big{|}J^{\mathsf{c}}\big{)}, \mbox{ for all } n\times n \mbox{ TN }A.
\end{align*}
Therefore, we have \eqref{Multi-thm-3-ineq-2}, and have found a composition of index-row/column operations
\begin{align*}
\Ro_{(u_{k_r},v_{k_r})} \circ \cdots \circ \Ro_{(u_1,v_1)}\circ
\Co_{(u_{k_{c}}',v_{k_{c}}')} \circ \cdots \circ \Co_{(u_1',v_1')},
\end{align*}
which when acts on inequality \eqref{Multi-thm-3-ineq-2} produces inequality \eqref{smallest-multi}. The converse is obvious using Lemma~\ref{Main-lemma-1}. This completes the proof.
\end{proof}

Having seen the proof of Theorem~\ref{Multi-thm-3}, one could reason that the so called hierarchy discussed in Remark~\ref{FGJ-Skan-comp-remark} is possible to obtain for the smallest multiplicative inequalities. We note that this is made possible to prove only because of the description of these inequalities in Theorem~\ref{FGJ-Skan-thm}, and any general reasoning behind this success is yet to be found.

\section{Min \& max multiplicities and the index-row/column operations}\label{minmax}

In this section, we discuss a generalization of Lemma~\ref{FGJ-Multi-lemma} and Lemma~\ref{Multi-lemma-1} (which are stated/proved in the context of multiplicative inequalities \eqref{Multi-ineq-1}) to certain refined multiplicity conditions that apply to the broader class of determinantal inequalities over TN matrices. 

For this purpose, consider a generic determinantal inequality over TN matrices: suppose $n\geq 1$ is an integer, and let $\mathcal{I}_1,\mathcal{I}_2$ be ordered nonnempty finite indexing sets, and $\gamma_k:\mathcal{I}_k \to \Z_{>0},$ for $k=1,2.$ Let $P_{ij},$ $Q_{ij},$ $I_{ij},$ $J_{ij}\subseteq [n],$ such that $|P_{ij}|=|Q_{ij}|$ and $|I_{ij}|=|J_{ij}|,$ for all $i\in \mathcal{I}_1\cup \mathcal{I}_2$ and $j\in [\gamma_1(i)]\cup[\gamma_2(i)].$ Suppose for $\lambda_{i}> 0$ and $\mu_{i}> 0,$
\begin{align}\label{minmax-lemma-1-ineq}
\sum_{i\in \mathcal{I}_1} \lambda_{i} \prod_{j=1}^{\gamma_1(i)} \det A\big{(}P_{ij}\big{|}Q_{ij}\big{)}\leq \sum_{i\in \mathcal{I}_2} \mu_{i} \prod_{j=1}^{\gamma_2(i)} \det A\big{(}I_{ij}\big{|}J_{ij}\big{)},
\end{align}
for all $n\times n$ totally nonnegative matrices $A.$ 

We introduce the required concepts of a minimum (min) and maximum (max) multiplicity and state the refinement of Lemma~\ref{FGJ-Multi-lemma} first:


\begin{defn}\label{minmax-defn-1}
Suppose $n\geq 1$ is an integer, and let $\mathcal{I}$ be an ordered nonnempty finite indexing set, and $\gamma:\mathcal{I} \to \Z_{>0}.$ Suppose $P_{ij}\subseteq [n]$ for all $i\in \mathcal{I}$ and $j\in [\gamma(i)].$ Define min and max multiplicities:
\begin{align*}
\ml^{\max}_{P}(u):= \max_{i \in \mathcal{I}} |\{k\in [\gamma(i)]:u \in P_{ik} \}|, \mbox{ and }~ \ml^{\min}_{P}(u):= \min_{i \in \mathcal{I}} |\{k\in [\gamma(i)]:u \in P_{ik} \}|,
\end{align*}
for all $u\in [n].$
\end{defn}

It is not difficult to check that $\ml_{X}^{\min}=\ml_{X}^{\max}=\ml_{X}$ for $X=P,Q,I,J$ in Definitions~\ref{Multi-ineq-defn-1} and \ref{minmax-defn-1}. Now we state the aforementioned generalization.

\begin{ulemma}\label{minmax-lemma-1}
Suppose $n\geq 1$ is an integer, and inequality~\eqref{minmax-lemma-1-ineq} holds. Then
\begin{align*}
\ml^{\max}_{P}(u) \leq \ml^{\max}_{I}(u) &\mbox{ and }~\ml^{\max}_{Q}(u) \leq \ml^{\max}_{J}(u), \mbox{ and}\\
\ml^{\min}_{P}(u) \geq \ml^{\min}_{I}(u) &\mbox{ and }~\ml^{\min}_{Q}(u) \geq \ml^{\min}_{J}(u),  \mbox{ for all } u\in [n].
\end{align*}
\end{ulemma}

Lemma~\ref{minmax-lemma-1} clearly simplifies to Lemma~\ref{FGJ-Multi-lemma} since the min and max multiplicities become equal; below we provide the verification.

\begin{proof}[Proof of Lemma~\ref{FGJ-Multi-lemma}]
Note that for multiplicative inequalities \eqref{Multi-ineq-1} the cardinalities $|\mathcal{I}_1|=|\mathcal{I}_2|=1,$ where $\mathcal{I}_1,\mathcal{I}_2$ are as in Lemma~\ref{minmax-lemma-1}. This implies $\ml_{X}^{\min}=\ml_{X}^{\max}=\ml_{X}$ for $X=P,Q,I,J.$ Hence, using Lemma~\ref{minmax-lemma-1}, we obtain the conclusions in Lemma~\ref{FGJ-Multi-lemma}.
\end{proof}

\begin{proof}[Proof of Lemma~\ref{minmax-lemma-1}]
Define the following for all $n\times n$ TN matrices $A:$
\begin{align*}
F_{\mathcal{I}_1}(A):=\sum_{i\in \mathcal{I}_1} \lambda_{i} \prod_{j=1}^{\gamma_1(i)} \det A\big{(}P_{ij}\big{|}Q_{ij}\big{)}, \mbox{ and }~ F_{\mathcal{I}_2}(A) := \sum_{i\in \mathcal{I}_2} \mu_{i} \prod_{j=1}^{\gamma_2(i)} \det A\big{(}I_{ij}\big{|}J_{ij}\big{)}.
\end{align*}
Now consider the  $n\times n$ TN matrix $A$ such that each $w_{j,k}=w_{j,k+1}'=1$ and $D=\I$ in \eqref{TN-classification-eqn}. Let $u\in [n],$ and suppose $B_u(w)=D_{u}(w)A,$ where $D_{u}(w):=\diag(1,\ldots, w ,\ldots, 1)$ with $u^{\mbox{th}}$ diagonal entry being $w>0$ and the remaining diagonal entries being $1.$

Observe that $F_{\mathcal{I}_1}(B_u(w))$ and $F_{\mathcal{I}_2}(B_u(w))$ are polynomials in $w$ of degrees $\ml_{P}^{\max}(u)$ and $\ml_{I}^{\max}(u)$, respectively. Since $F_{\mathcal{I}_1}(B_u(w))\leq F_{\mathcal{I}_2}(B_u(w))$ for all $w>0,$ if $\ml_{P}^{\max}(u)>\ml_{I}^{\max}(u),$ then for large values of $w>0$ we contradict \eqref{minmax-lemma-1-ineq}. Therefore $\ml_{P}^{\max}(u) \leq \ml_{I}^{\max}(u)$ for all $u\in [n].$ Similarly we can prove that $\ml_{Q}^{\max}(u) \leq \ml_{J}^{\max}(u)$, for all $u\in [n]$ be setting $B_u(w)=AD_{u}(w)$.

On the other hand, $w^{\ml_{I}^{\max}(u)}F_{\mathcal{I}_1}(B_u(1/w))$ and $w^{\ml_{I}^{\max}(u)}F_{\mathcal{I}_2}(B_u(1/w))$ are polynomials in $w$ of degrees $\ml_{I}^{\max}(u)-\ml_{P}^{\min}(u)$ and $\ml_{I}^{\max}(u)-\ml_{I}^{\min}(u)$, respectively. If $\ml_{P}^{\min}(u) < \ml_{I}^{\min}(u),$ since $w^{\ml_{I}^{\max}(u)}F_{\mathcal{I}_1}(B_u(1/w)) \leq w^{\ml_{I}^{\max}(u)}F_{\mathcal{I}_2}(B_u(1/w))$ for all $w>0,$ for large values of $w>0$ we contradict \eqref{minmax-lemma-1-ineq}. Therefore $\ml^{\min}_{P}(u) \geq \ml^{\min}_{I}(u)$ for all $u\in [n].$ Similarly, we can prove the remaining inequality by considering $B_u(w)=AD_{u}(w)$.
\end{proof}

To summarize: we introduce the concepts of min and max multiplicities corresponding to any row/column index in any determinantal inequality over TN matrices. We see that the min and max multiplicities coincide for the (simpler) case of multiplicative inequalities \eqref{Multi-ineq-1}. This generalizes the concept of multiplicity in Lemma~\ref{FGJ-Multi-lemma} to the min and max multiplicities for the extended class of inequality over TN matrices in Lemma~\ref{minmax-lemma-1}. For the next refinement (of Lemma~\ref{Multi-lemma-1}), if needed, recall Definition~\ref{row-op} before going further:

\begin{defn}\label{minmax-defn-2}
Suppose $n\geq 1$ is an integer, and let $\mathcal{I}$ be an ordered nonnempty finite indexing set, and $\gamma:\mathcal{I} \to \Z_{>0}.$ Suppose $P_{ij}\subseteq [n]$ for all $i\in \mathcal{I}$ and $j\in [\gamma(i)].$ Define:
\begin{align*}
\ml_{P}(u,v):= \max_{i \in \mathcal{I}} |i(u,v)|, \mbox{ for all consecutive }u,v\in [n].
\end{align*}
\end{defn}

\begin{ulemma}\label{minmax-lemma-2}
Suppose $n\geq 1$ is an integer, and inequality~\eqref{minmax-lemma-1-ineq} holds. Then
\begin{align*}
\ml_{P}(u,v)\leq \ml_I(u,v) \mbox{ and }~\ml_{Q}(u,v)\leq \ml_J(u,v),\mbox{ for all consecutive }u,v\in [n].
\end{align*}
\end{ulemma}

The proof of Lemma~\ref{minmax-lemma-2} proceeds verbatim as in the proof of Lemma~\ref{Multi-lemma-1} and we leave the details to the reader. Lemma~\ref{minmax-lemma-2} provides the next level of multiplicity conditions -- which arise from the index-row/column operations -- necessary for validity of all inequalities of the form (\ref{minmax-lemma-1-ineq}) over TN matrices. Now, recall that Algorithm~\ref{algo} was devised based on Lemma~\ref{Multi-lemma-1} (for multiplicative inequalities), therefore a similar algorithm can be constructed following Lemma~\ref{minmax-lemma-2} (applicable to all determinantal inequalities). Finally, comments similar to Remark~\ref{algo-succeed-row-op-remark} can be made about this refined algorithm. We leave the required details to the reader.

\section{Inferences, questions, and the Barrett--Johnson inequalities}\label{Section-conclusion}

In this paper we have set out to constructively utilize two important properties for the classes of TN matrices, namely multiplicative closure and the existence of a bidiagonal factorization, to aid in potentially exhibiting new relationships among minors for this class of matrices. Specifically, we systematically tracked the effect on a determinantal relationship upon multiplication by an elementary bidiagonal TN matrix. Bearing in mind some careful analysis and accounting, we derived, based on both classical and more modern determinantal relationships, new such relationships leading to some novel expressions involving certain minors of TN matrices. In addition, we have expressed this impact on certain relationships as index-row/column operations on the minors directly. This view point, in some sense, exposes a more general interpretation on the possible derived determinantal relationships.

Our main contributions along these lines (in the order of presentation) are Theorems~\ref{Add-thm-1}, \ref{Add-thm-2}, and \ref{Add-thm-3}, which present refinements of some classical relationships due to Laplace, Karlin, and Gantmacher--Krein, in addition to unravelling an undiscovered class of additive inequalities over TN matrices. Moreover, these results account for all known additive inequalities (except for Theorem~\ref{Skan-Sos}) associated with totally nonnegative matrices. Theorems \ref{Multi-thm-2} and \ref{Multi-thm-3} represent our main results within the context of multiplicative inequalities for the class of TN matrices. All of these inequalities appear to be derivable from higher/complementary inequalities (except for Theorem~\ref{Add-thm-2}, where the complementary ones do not exist). Furthermore, they all exhibit the ``converse'' of the universally applicable Lemma~\ref{Main-lemma-1} -- the additive ones show it trivially, and the smallest multiplicative with some work. 

Throughout this work, we also develop needed tools and machinery that built a framework for formally expressing the new relationships exposed by our analysis (see Lemmas \ref{Main-lemma-1}, \ref{Main-lemma-2}, \ref{Main-lemma-3}, \ref{Add-thm-3-lemma}, and \ref{Multi-lemma-1}, along with Lemmas~\ref{minmax-lemma-1},~\ref{minmax-lemma-2}, and Theorem \ref{Multi-thm-1}). We also provide relevant comments as additional study material (see Remarks~\ref{Hierarchy-rem-0},~\ref{MainLemma-converse},~\ref{Main-lemma-3-remark},~\ref{Hierarchy-rem-1},~\ref{Add-thm-3-remark},~\ref{Column-op-rem},~\ref{Hierarchy-remark},~\ref{algo-succeed-row-op-remark},~\ref{algo-limit-remark},~\ref{Multi-thm-2-significance},~\ref{FGJ-Skan-comp-remark}, and~\ref{Multi-thm-3-significance}, along with Examples~\ref{multi-example-1}, and \ref{multi-example-2}). We now conclude this paper with a few follow up questions.

\begin{uquestion}\label{Q1}
Complete the refinement of the generalized Laplace identity (see \cite{M}) -- which we partially achieved in Theorem~\ref{Add-thm-3} -- along the lines of Theorem~\ref{Add-thm-1} (or Gantmacher--Krein's \eqref{GK-ineq}).
\end{uquestion}

\begin{uquestion}\label{Q2}
We saw that proving the so-called converse of Lemma~\ref{Main-lemma-1} would seem too ambitious in general, and that all the smallest multiplicative inequalities comply with this converse (see Theorem~\ref{Multi-thm-2} and Remark~\ref{Multi-thm-2-significance}). The universality of Lemma~\ref{Main-lemma-1} naturally motivates to seek its converse, or (at least) more inequalities for which this converse holds, hence the problem.
\end{uquestion}

For Question~\ref{Q3}, we illustrate the theory developed here applied to a recently exhibited and interesting Barrett--Johnson determinantal inequality for TN matrices.
\begin{theorem}[Skandera--Soskin \cite{Skan-Sos}]\label{Skan-Sos}
Let $n,r\geq 1$ be integers and $A$ be an $n\times n$ totally nonnegative matrix. Suppose $(\lambda_1,\ldots,\lambda_r)$ and $(\mu_1,\ldots,\mu_r)$ are nonincreasing sequences of nonnegative integers that sum up to $n.$ If $\sum_{k=1}^{l}\lambda_k \leq \sum_{k=1}^{l}\mu_k$ for all $l\in [1,r],$ then
\begin{align}\label{BJ-old}
\lambda_1 ! \cdots \lambda_r ! \sum_{I} \prod_{k=1}^{r} \det A\big{(}I_k\big{)}\geq\mu_1 ! \cdots \mu_r ! \sum_{J}\prod_{k=1}^{r} \det A \big{(}J_k\big{)}
\end{align}
where the sums are taken over collection of partitions 
$$
I:=(I_1,\ldots,I_r)\mbox{ and } J:=(J_1,\ldots,J_r)\mbox{ of }[n],
$$ 
such that for each $k\in [1,r],$ $|I_k|=\lambda_k,$ and $|J_k|=\mu_k.$ 
\end{theorem}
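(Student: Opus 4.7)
The plan is to prove the inequality by reducing it to a single elementary majorization step and then invoking the planar-network machinery together with the bidiagonal factorization.

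First, by the density of nonsingular TN matrices and the continuity of the determinant (see Theorem~\ref{TN-classification}), it suffices to treat the case when $A$ is nonsingular. Second, it is a classical fact in the theory of majorization that any two nonincreasing integer sequences of the same length, with equal sums and satisfying $\sum_{k=1}^{l}\lambda_k\leq\sum_{k=1}^{l}\mu_k$ for all $l\in[1,r]$, can be connected by a finite chain of single-step ``Robin--Hood'' transfers. Each such transfer replaces a pair $(\mu_i,\mu_j)$ with $i<j$ by $(\mu_i-1,\mu_j+1)$, followed by a reordering to preserve the nonincreasing property. Thus it suffices to prove the inequality in the special case where $\mu$ and $\lambda$ differ by exactly one such transfer, in which case only two part-sizes change and the claim collapses to a comparison between two sums of products of principal minors whose shapes differ in a controlled way.

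For this reduced single-transfer inequality, I would use the Lindstr\"{o}m Lemma~\ref{Lin-lemma} to reinterpret each product $\prod_{k=1}^{r}\det A(I_k)$ as a weighted sum over $r$-tuples of disjoint path systems in the planar network of $A$ (via the construction in Section~\ref{Section-TN}). The factorials $\lambda_1!\cdots\lambda_r!$ and $\mu_1!\cdots\mu_r!$ should absorb the permutations of indices within each part that do not affect the principal minor, and the inequality should then reduce to the nonnegativity of a weighted sum of differences of path-system contributions of the prescribed combinatorial shape. An alternative route is to induct on the number of elementary bidiagonal factors in the factorization~\eqref{TN-classification-eqn}: by Notion~\ref{main-idea}, multiplying $A$ on one side by $\I+w\El_{uv}$ expresses $F_\lambda(A')-F_\mu(A')$ as a polynomial in $w$ whose constant term is $F_\lambda(A)-F_\mu(A)$ and whose higher coefficients should admit analogous nonnegativity arguments from the same template (here $F_\nu$ denotes the weighted partition sum associated to the partition shape $\nu$).

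The main obstacle, I expect, will be constructing the explicit positivity-preserving correspondence at the single-transfer level. The asymmetry between $\lambda_i!\lambda_j!$ and $\mu_i!\mu_j!$ reflects a symmetrization that must be delicately broken in order to pair contributing path families, and one has to manage edge cases cleanly, especially when several parts of $\mu$ coincide with $\mu_i$ or $\mu_j$, since this inflates the number of ordered partitions producing the same product of minors. This step is conceptually akin to, but technically heavier than, the swap and path-counting arguments used in the proofs of Lemma~\ref{Add-thm-3-lemma} and Theorem~\ref{Add-thm-2}.
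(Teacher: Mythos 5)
Note first that the paper does not prove Theorem~\ref{Skan-Sos}; it is cited directly from Skandera--Soskin \cite{Skan-Sos}, and the paper's own contribution in Section~\ref{Section-conclusion} is only to apply the set row/column operations to \eqref{BJ-old} to obtain Corollary~\ref{BJ-thm-new}. So there is no in-paper proof against which to compare your attempt; any evaluation must be on its own merits.

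As a standalone plan, your proposal sets up the right outer scaffolding but has a genuine gap at the decisive step. The reductions you propose are both sound: density of nonsingular TN matrices plus continuity of the determinant lets you restrict to the invertible case, and the Hardy--Littlewood--P\'olya theorem lets you chain the majorization $\lambda\preceq\mu$ through single ``Robin--Hood'' transfers, so it does suffice to prove \eqref{BJ-old} when $\mu$ and $\lambda$ differ by one transfer. However, you then explicitly defer the proof of the single-transfer inequality --- which is exactly the nontrivial combinatorial content of the theorem --- describing it as an ``obstacle'' you ``expect'' rather than establishing it. That step is not merely technically heavy bookkeeping of the kind appearing in Lemma~\ref{Add-thm-3-lemma}: the factorial weights $\lambda_1!\cdots\lambda_r!$ versus $\mu_1!\cdots\mu_r!$ do not arise from orderings within a part (principal minors are symmetric in their index sets), but rather reflect an immanant/representation-theoretic normalization, and matching a $\lambda$-shaped family of disjoint path systems against a $\mu$-shaped one with the correct multiplicities is precisely where the known proofs of Barrett--Johnson-type inequalities (for both PSD and TN classes) concentrate their effort. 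Your alternative route via Notion~\ref{main-idea} has the same problem one level deeper: multiplying by $\I+w\El_{uv}$ expands $F_\lambda(A')-F_\mu(A')$ into a polynomial in $w$ whose intermediate coefficients mix terms coming from different partition shapes (those in which $u$ lies in a part that does or does not contain $v$), and showing those coefficients are nonnegative is not a corollary of the same ``template'' --- it requires its own positivity argument, which you have not supplied. In short: the majorization reduction and the planar-network language are the right framework, but the core lemma is missing, so the proposal is an outline rather than a proof.
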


When we subsequently apply $\Ro_{(u,v)}$ and $\Co_{(u,v)},$ for consecutive $u,v\in [n],$ on \eqref{BJ-old}, we obtain a novel inequality:

\begin{ucor}\label{BJ-thm-new}
Under the hypotheses in Theorem~\ref{Skan-Sos}, for consecutive $u,v\in [n],$
\begin{align}\label{BJ-ineq-new}
\lambda_1 ! \cdots \lambda_r ! \sum_{I{(u,v)}} \prod_{k=1}^{r} \det A\big{(}I_k(u,v)\big{)} \geq \mu_1 ! \cdots \mu_r ! \sum_{J{(u,v)}}\prod_{k=1}^{r} \det A \big{(}J_k(u,v)\big{)}
\end{align}
where the sums are taken over collection of partitions 
$$
I{(u,v)}:=(I_1,\ldots,I_r)\mbox{ and } J{(u,v)}:=(J_1,\ldots,J_r)\mbox{ of }[n],
$$ 
such that for each $k\in [1,r],$ $|I_k|=\lambda_k,$ $|J_k|=\mu_k,$ $\{u,v\}\not\subseteq I_{k},$ and $\{u,v\}\not\subseteq J_{k}.$
\end{ucor}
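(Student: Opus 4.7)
The plan is to apply Lemma~\ref{Main-lemma-1} together with its column analogue (Remark~\ref{Column-op}) directly to the Skandera--Soskin inequality \eqref{BJ-old}. First, I would move everything to one side, rewriting \eqref{BJ-old} in the template of Lemma~\ref{Main-lemma-1} as
\[
\lambda_1!\cdots\lambda_r! \sum_I \prod_{k=1}^{r}\det A\big{(}I_k\big{|}I_k\big{)} \;-\; \mu_1!\cdots\mu_r! \sum_J \prod_{k=1}^{r}\det A\big{(}J_k\big{|}J_k\big{)} \;\geq\; 0,
\]
so that the indexing set $\mathcal{I}$ is the disjoint union of the admissible $I$-partitions (scalars $\lambda_1!\cdots\lambda_r!$) and $J$-partitions (scalars $-\mu_1!\cdots\mu_r!$), each contributing a single product of principal minors.

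Next, I would evaluate $|i(u,v)|$ for each partition $I=(I_1,\ldots,I_r)$ of $[n]$ in $\mathcal{I}$. Since every element of $[n]$ lies in exactly one part, both $u$ and $v$ belong to a unique part of $I$; therefore the condition ``$u\in I_k$ and $v\notin I_k$'' holds for exactly one index $k$ when $u$ and $v$ lie in distinct parts, and for no index $k$ when they lie in the same part. Hence $|i(u,v)|\in\{0,1\}$, with $\max_{I\in\mathcal{I}}|i(u,v)|=1$ attained precisely by those partitions satisfying $\{u,v\}\not\subseteq I_k$ for every $k$. Applying $\Ro_{(u,v)}$ via Lemma~\ref{Main-lemma-1} then produces the valid inequality
\[
\lambda_1!\cdots\lambda_r! \sum_{I(u,v)}\prod_{k=1}^{r}\det A\big{(}I_k(u,v)\big{|}I_k\big{)} \;\geq\; \mu_1!\cdots\mu_r! \sum_{J(u,v)}\prod_{k=1}^{r}\det A\big{(}J_k(u,v)\big{|}J_k\big{)},
\]
where the sums are exactly over the $\{u,v\}$-separated partitions appearing in \eqref{BJ-ineq-new}, and where in each surviving partition the single part originally containing $u$ has $u$ swapped to $v$ in its row index.

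Applying $\Co_{(u,v)}$ to this new inequality completes the plan. The column indices remaining after the row step are still the original $I_k$'s (respectively $J_k$'s), which continue to form a partition of $[n]$, and the filter already imposed guarantees that $u$ and $v$ lie in distinct parts of every surviving tuple. The same $|i(u,v)|=1$ analysis therefore reapplies to the column side: no further terms are discarded, and the column swap sends $I_k\mapsto I_k(u,v)$ (and likewise $J_k\mapsto J_k(u,v)$). The resulting minors are principal once again, and the inequality obtained is exactly \eqref{BJ-ineq-new}.

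The main obstacle is combinatorial bookkeeping rather than any new analytic idea: one must verify that (a) after $\Ro_{(u,v)}$ the surviving indexing set $\mathcal{I}(u,v)$ really coincides with the class of $\{u,v\}$-separated partitions in \eqref{BJ-ineq-new}; (b) the subsequent $\Co_{(u,v)}$ can be executed with the same maximum value $|i(u,v)|=1$, which crucially relies on the row-level filter having already isolated partitions where $u$ and $v$ are in distinct parts so that the column-index partition inherits the same separation; and (c) the factorial weights $\lambda_1!\cdots\lambda_r!$ and $\mu_1!\cdots\mu_r!$ pass unchanged through both operations, since $\Ro_{(u,v)}$ and $\Co_{(u,v)}$ act only on row/column index sets and leave the scalar coefficients $\lambda_i$ of Lemma~\ref{Main-lemma-1} intact.
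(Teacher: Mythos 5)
Your proof is correct and takes essentially the same route as the paper, which simply asserts that \eqref{BJ-ineq-new} is obtained by applying $\Ro_{(u,v)}$ followed by $\Co_{(u,v)}$ to \eqref{BJ-old}; you have filled in the bookkeeping (identifying $|i(u,v)|\in\{0,1\}$ for partition-indexed products, checking that the surviving index set after $\Ro_{(u,v)}$ is exactly the class of $\{u,v\}$-separated partitions, and verifying that the subsequent $\Co_{(u,v)}$ discards nothing further because the unchanged column partitions already separate $u$ and $v$). The only thing you leave implicit is the degenerate situation where $\max_{i\in\mathcal{I}}|i(u,v)|=0$ (forcing $\lambda_1=\mu_1=n$), in which case $\Ro_{(u,v)}$ acts as the identity rather than producing \eqref{BJ-ineq-new} directly; but there both sides of \eqref{BJ-ineq-new} are empty sums and the conclusion is trivially true, so the omission is harmless.
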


\begin{uquestion}\label{Q3}
The Barrett--Johnson inequality~\eqref{BJ-old} certainly sits at the top of the hierarchy among inequalities (see Remarks~\ref{Hierarchy-rem-0},~\ref{Hierarchy-remark}, and~\ref{FGJ-Skan-comp-remark}), and the discussion above clearly hints at the existence of several nontrivial inequalities for TN matrices that result by simply applying different compositions of the index-row/column operations $\Ro_{(*,*)}$/$\Co_{(*,*)};$ we listed just one of those in Corollary~\ref{BJ-thm-new}. However, it does not look very straightforward to identify all of these at once, hence the problem.
\end{uquestion}

\section*{Acknowledgements} 
We would like to thank Apoorva Khare for carefully going through this work and for providing valuable feedback that helped to improve the manuscript. We also thank the American Institute of Mathematics (CalTech campus) for their hospitality and stimulating environment during a workshop on {Theory and Applications of Total Positivity,} held in July 2023. This allowed us to interact fruitfully with several other researchers. In particular, we thank Alan Sokal for the suggestion to change the nomenclature from ``row operations'' to ``index-row operations'', Melissa Sherman-Bennett for stimulating discussions on the (totally nonnegative) Grassmannian, and Lauren Williams for the suggestion to add Remark~\ref{alg-rel}. These suggestions provided further clarity in the exposition of the manuscript.

S.M. Fallat was supported in part by an NSERC Discovery Research Grant, Application No.: RGPIN-2019-03934. P.K. Vishwakarma was supported in part by a Pacific Institute for the Mathematical Sciences (PIMS) Postdoctoral Fellowship, and he also acknowledges the Canadian Queen Elizabeth II Diamond Jubilee Scholarship that supported his first research visit to Regina $($in $2019)$ where initial discussions took place.

%
%
%


\addtocontents{toc}{\protect\setcounter{tocdepth}{1}}

\vspace*{2cm}

\end{document}